\title{Borel-type presentation of the torus-equivariant quantum $K$-ring of flag manifolds of type $C$\footnotetext{Key words and phrases: (quantum) Schubert calculus, (quantum) $K$-theory, Borel-type presentation, semi-infinite flag manifold, inverse Chevalley formula}
\footnotetext{Mathematics Subject Classification 2020: Primary 14M15, 14N35; Secondary 14N15, 05E10, 20C08.}
}
\author[1]{Takafumi Kouno\footnote{e-mail: \url{t.kouno@aoni.waseda.jp}}}
\affil[1]{
Department of Pure and Applied Mathematics,
Faculty of Science and Engineering, 
Waseda University
 3-4-1 Okubo, Shinjuku-ku, Tokyo 169-8555, Japan
}
\author[2]{Satoshi Naito\footnote{e-mail: \url{naito@math.titech.ac.jp}}}
\affil[2]{
Department of Mathematics, 
Institute of Science Tokyo, 
2-12-1 Oh-okayama, Meguro-ku, Tokyo 152-8551, Japan
}
\date{}
\numberwithin{equation}{section}
\theoremstyle{plain}
\newtheorem{thm}{Theorem}[section]
\newtheorem{lem}[thm]{Lemma}
\newtheorem{prop}[thm]{Proposition}
\newtheorem{cor}[thm]{Corollary}
\newtheorem{ithm}{Theorem}
\theoremstyle{definition}
\newtheorem{defn}[thm]{Definition}
\theoremstyle{remark}
\newtheorem{rem}[thm]{Remark}
\newtheorem{exm}[thm]{Example}
\newcommand{\BC}{\mathbb{C}}
\newcommand{\BH}{\mathbb{H}}
\newcommand{\BP}{\mathbb{P}}
\newcommand{\BZ}{\mathbb{Z}}
\newcommand{\BI}{\mathbf{I}}
\newcommand{\CA}{\mathcal{A}}
\newcommand{\CE}{\mathcal{E}}
\newcommand{\CF}{\mathcal{F}}
\newcommand{\CI}{\mathcal{I}}
\newcommand{\CJ}{\mathcal{J}}
\newcommand{\CL}{\mathcal{L}}
\newcommand{\CO}{\mathcal{O}}
\newcommand{\CS}{\mathcal{S}}
\newcommand{\CZ}{\mathcal{Z}}
\newcommand{\FE}{\mathfrak{E}}
\newcommand{\FF}{\mathfrak{F}}
\newcommand{\FH}{\mathfrak{H}}
\newcommand{\FP}{\mathfrak{P}}
\newcommand{\FQ}{\mathfrak{Q}}
\newcommand{\FZ}{\mathfrak{Z}}
\newcommand{\st}{\mathsf{t}}
\newcommand{\sq}{\mathsf{q}}
\newcommand{\SD}{\mathsf{D}}
\newcommand{\ST}{\mathsf{T}}
\newcommand{\SX}{\mathsf{X}}
\newcommand{\e}{\mathbf{e}}
\newcommand{\ve}{\varepsilon}
\newcommand{\vp}{\varphi}
\newcommand{\vpi}{\varpi}
\newcommand{\af}{\mathrm{af}}
\newcommand{\poly}{\mathrm{poly}}
\newcommand{\rat}{\mathrm{rat}}
\newcommand{\bQG}{\mathbf{Q}_{G}}
\newcommand{\Sp}{\mathrm{Sp}_{2n}(\BC)}
\newcommand{\KTCQG}{K_{T \times \BC^{\ast}}(\bQG)}
\newcommand{\KTQG}{K_{T}(\bQG)}
\newcommand{\sinf}{\frac{\infty}{2}}
\newcommand{\pair}[2]{\langle #1, #2 \rangle}
\newcommand{\bra}[1]{[\![ #1 ]\!]}
\newcommand{\pra}[1]{(\!( #1 )\!)}
\DeclareMathOperator{\End}{End}
\DeclareMathOperator{\QBG}{QBG}
\DeclareMathOperator{\ed}{end}
\DeclareMathOperator{\down}{down}
\DeclareMathOperator{\sign}{sign}
\newenvironment{enu}{%
\begin{enumerate}[label=\textup{(\arabic*)}]}{%
\end{enumerate}}
\newcommand{\map}[8][\rightarrow]{%
\setlength{\arraycolsep}{2pt}
\begin{array}{rcccl} %
#2 : & #3 & #1 & #4 & #5 \\ %
 & #6 & \mapsto & #7 & #8 %
\end{array}}
\begin{document}
\maketitle

\begin{abstract}
We give a Borel-type presentation of the torus-equivariant (small) quantum $K$-ring of flag manifolds of type $C$. 
\end{abstract}

\section{Introduction}
Let $G$ be a connected, simply-connected, simple algebraic group over $\mathbb{C}$, $B$ its Borel subgroup, $T \subset B$ a maximal torus, and $G/B$ the flag manifold. 
The $T$-equivariant (small) quantum $K$-ring $QK_{T}(G/B)$ of the flag manifold $G/B$, defined by Givental \cite{Givental} and Lee \cite{Lee}, is, as a module, 
\begin{equation}
QK_{T}(G/B) := K_{T}(G/B) \otimes_{R(T)} R(T)\bra{Q_{1}, \ldots, Q_{n}}, 
\end{equation}
where $K_{T}(G/B)$ is the (ordinary) $T$-equivariant $K$-ring of $G/B$, $R(T)$ is the representation ring of $T$, and $Q_{k}$, $k = 1, \ldots, n$, with $n = \mathrm{rank} \, G$, are the (Novikov) variables. 
The quantum $K$-ring $QK_{T}(G/B)$ is also equipped with the quantum product $\star$, defined in terms of the 2-point and 3-point (genus 0, equivariant) $K$-theoretic Gromov-Witten invariants, and becomes an associative, commutative algebra (for details, see \cite{Givental} and \cite{Lee}). 

The quantum $K$-ring $QK_{T}(G/B)$ has a good basis as an $R(T)\bra{Q_{1}, \ldots, Q_{n}}$-module, called the \emph{Schubert basis}. 
Let $W$ be the Weyl group of $G$, and $B^{-}$ the opposite Borel subgroup of $G$ such that $B \cap B^{-} = T$. Then, for $w \in W$, the (opposite) \emph{Schubert variety} $X^{w}$ is defined by $X^{w} := \overline{B^{-}wB/B}$, where $\overline{{}\cdot{}}$ denotes the Zariski closure; 
we denote by $\CO^{w}$, $w \in W$, the structure sheaf of $X^{w}$. 
Then the classes $[\CO^{w}] \in K_{T}(G/B)$, $w \in W$, form an $R(T)$-basis of $K_{T}(G/B)$, and hence they form an $R(T)\bra{Q_{1}, \ldots, Q_{n}}$-basis of $QK_{T}(G/B)$, called the \emph{Schubert basis}. 

The main interest in quantum Schubert calculus lies in determining the structure constants of $QK_{T}(G/B)$ with respect to the Schubert basis $[\CO^{w}]$, $w \in W$, that is, the elements $c_{v, w}^{u, \bm{k}} \in R(T)$ for $v, w, u \in W$ and $\bm{k} \in (\BZ_{\ge 0})^{n}$, given by: 
\begin{equation}
[\CO^{v}] \star [\CO^{w}] = \sum_{\substack{u \in W \\ \bm{k} = (k_{1}, \ldots, k_{n}) \in (\BZ_{\ge 0})^{n}}} c_{v, w}^{u, \bm{k}} Q_{1}^{k_{1}} \cdots Q_{n}^{k_{n}} [\CO^{u}]. 
\end{equation}
One approach to this problem is to give a (Borel-type) presentation of the quantum $K$-ring $QK_{T}(G/B)$ as the quotient of a Lauren polynomial ring by an explicit ideal, and then find a ``good'' Laurent polynomial representing each Schubert class under the above presentation. 
In type $A$, Maeno--Naito--Sagaki \cite{MNS} (see also \cite{ACT}) has given a Borel-type presentation of $QK_{T}(Fl_{n+1})$, where $Fl_{n+1}$ denotes the flag manifold $G/B$ of type $A_{n}$, where $G = SL_{n+1}(\BC)$ and $B$ is its subgroup consisting of the upper-triangular matrices; 
cf. the original (conjectural) presentation proposed by Kirillov--Maeno. 
Moreover, Maeno--Naito--Sagaki \cite{MNS2} proved that \emph{quantum double Grothendieck polynomials}, introduced by Lenart--Maeno \cite{LM}, 
represent Schubert classes under the Borel-type presentation of $QK_{T}(Fl_{n+1})$. 

In this paper, as the first (but important) step in the above approach in type $C$, we give a Borel-type presentation of $QK_{T}(G/B)$, where $G/B$ is the flag manifold of type $C$. 
Let $G = \Sp$ be the symplectic group of rank $n$, and 
$\{\ve_{1}, \ldots, \ve_{n}\}$ the standard basis for the weight lattice $P$ of $G$. 
For $\lambda \in P$, we denote by $\e^{\lambda} \in R(T)$ the character of the one-dimensional representation of $B$ whose weight is $\lambda$. 
We introduce certain elements $F_{1}, \ldots, F_{n} \in (R(T)\bra{Q_{1}, \ldots, Q_{n}})[z_{1}^{\pm 1}, \ldots, z_{n}^{\pm 1}]$ (see Definition~\ref{def:Fl} for the explicit form of these elements), and define an ideal $\CI^{Q}$ of $(R(T)\bra{Q_{1}, \ldots, Q_{n}})[z_{1}^{\pm 1}, \ldots, z_{n}^{\pm 1}]$ to be the one generated by 
\begin{equation}
F_{l} - e_{l}(\e^{\ve_{1}}, \ldots, \e^{\ve_{n}}, \e^{-\ve_{n}}, \ldots, \e^{-\ve_{1}}), \quad 1 \le l \le n, 
\end{equation}
where $e_{l}(x_{1}, \ldots, x_{2n})$ is the $l$-th elementary symmetric polynomial in the ($2n$) variables $x_1, \ldots, x_{2n}$ for $1 \le l \le n$. 
It follows from the definition that the elements $F_{1}, \ldots, F_{n}$ satisfy $F_{l}|_{Q_{1} = \cdots = Q_{n} = 0} = e_{l}(z_{1}, \ldots, z_{n}, z_{n}^{-1}, \ldots, z_{1}^{-1})$. 

The following is the main result of this paper. 

\begin{ithm}[={} Theorem~\ref{thm:main}]
There exists an $R(T)\bra{Q}$-algebra isomorphism 
\begin{equation} \label{eq:quantum_Borel_intro}
\map[\xrightarrow{\sim}]{\Psi^{Q}}{(R(T)\bra{Q})[z_{1}^{\pm 1}, \ldots, z_{n}^{\pm 1}]/\CI^{Q}}{QK_{T}(G/B)}{}{z_{j} + \CI^{Q}}{\dfrac{1}{1-Q_{j}} [\CO_{G/B}(-\ve_{j})],}{\quad 1 \le l \le n,} 
\end{equation}
where $R(T)\bra{Q} := R(T)\bra{Q_{1}. \ldots, Q_{n}}$. 
\end{ithm}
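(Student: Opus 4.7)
The plan is to adapt the strategy of Maeno--Naito--Sagaki from type $A$ to type $C$, making essential use of the semi-infinite flag manifold $\bQG$ and the inverse Chevalley formula on $\KTCQG$. The proof splits naturally into three stages: well-definedness of $\Psi^{Q}$, surjectivity, and injectivity.

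First, to see that $\Psi^{Q}$ is well-defined, I must verify that each generator $F_{l} - e_{l}(\e^{\ve_{1}}, \ldots, \e^{\ve_{n}}, \e^{-\ve_{n}}, \ldots, \e^{-\ve_{1}})$ of $\CI^{Q}$ maps to zero under the assignment $z_{j} \mapsto \frac{1}{1-Q_{j}}[\CO_{G/B}(-\ve_{j})]$. The strategy is to realize $QK_{T}(G/B)$ as an appropriate quotient of a suitable completion/localization of $\KTCQG$ via Kato's isomorphism, under which $\frac{1}{1-Q_{j}}[\CO_{G/B}(-\ve_{j})]$ corresponds to a distinguished equivariant line bundle class on $\bQG$. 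The required relation then becomes an identity in $\KTCQG$, which can be verified by expanding the $l$-th elementary symmetric polynomial in these line bundle classes and applying the inverse Chevalley formula iteratively to express each resulting term in the Schubert basis, where the $R(T)$-coefficients should collapse, by a combinatorial cancellation, to $e_{l}(\e^{\ve_{1}}, \ldots, \e^{-\ve_{1}})$.

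Second, for surjectivity it suffices to show that every Schubert class $[\CO^{w}]$ lies in the image of $\Psi^{Q}$. Setting $Q_{1} = \cdots = Q_{n} = 0$, the problem reduces to the classical Borel-type presentation of $K_{T}(G/B)$ in type $C$, which is well known. Since $R(T)\bra{Q}$ is $(Q)$-adically complete and $F_{l}|_{Q = 0} = e_{l}(z_{1}, \ldots, z_{n}, z_{n}^{-1}, \ldots, z_{1}^{-1})$, a Nakayama-type lifting argument then upgrades classical surjectivity to the quantum setting. For injectivity, I would compare the $R(T)\bra{Q}$-ranks of the source and target: the target $QK_{T}(G/B)$ is free of rank $|W| = 2^{n} n!$, and on the source side the specialization at $Q = 0$ yields the classical quotient, which is free of rank $2^{n} n!$; a flatness argument (or an explicit monomial basis of the quotient surviving the $Q$-deformation) then forces the quantum quotient to have the same rank. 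A surjective $R(T)\bra{Q}$-linear map between free modules of the same finite rank is automatically an isomorphism.

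The principal obstacle is the well-definedness step, specifically the explicit computation that the image of $F_{l}$ under $\Psi^{Q}$ equals $e_{l}(\e^{\ve_{1}}, \ldots, \e^{-\ve_{1}})$. This rests on a careful combinatorial analysis via the inverse Chevalley formula on $\bQG$, which in type $C$ is substantially more intricate than in type $A$ due to the coexistence of short and long roots and the correspondingly richer structure of the relevant quantum Bruhat graph; the very definition of $F_{l}$ (to be given in Definition~\ref{def:Fl}) must be engineered precisely so that this cancellation goes through.
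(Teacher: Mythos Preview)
Your overall strategy matches the paper's. Two points need sharpening.

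For well-definedness, the paper's route is more structured than ``iterative inverse Chevalley plus cancellation.'' It uses the inverse Chevalley formula (Theorems~\ref{thm:IC_1} and~\ref{thm:IC_2}) only to obtain a single ``base'' relation among the elements $\FF_l \in K_T(\bQG)$ (equation~\eqref{eq:initial}), and then applies the Demazure operators $\SD_1, \ldots, \SD_{n-1}$ successively to this one relation to generate a triangular $R(T)$-linear system (Theorem~\ref{thm:system}) that determines each $\FF_l$ uniquely; solving it yields $\FF_l = \FE_l^n$. A separate symmetry $\FF_k = \FF_{2n-k}$ (Proposition~\ref{prop:symmetry}), peculiar to type $C$, is also required. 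Your description does not anticipate this Demazure-operator mechanism, which is the technical heart of the argument.

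For the isomorphism step, your injectivity argument has a genuine gap: you assert that the source $M = (R(T)\bra{Q})[z_1^{\pm 1}, \ldots, z_n^{\pm 1}]/\CI^Q$ is free of rank $|W|$ via ``a flatness argument or an explicit monomial basis,'' but neither is supplied and neither is obvious---the generators of $\CI^Q$ are not visibly a regular sequence, and no Gr\"obner-type analysis is available. The paper sidesteps freeness of $M$ entirely via a lemma of Gu--Mihalcea--Sharpe--Zou (Lemma~\ref{lem:quotient_isomorphism_lemma}), which requires only that $M$ be \emph{finitely generated}; this in turn is secured by Lemma~\ref{lem:finitely_generated} (over $A\bra{Q}$, a module whose reduction modulo $(Q)$ is finitely generated over $A$ is itself finitely generated). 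Since the ambient Laurent polynomial ring is not a finitely generated $R(T)\bra{Q}$-module, this finite-generation step is not automatic and is missing from your outline.
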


If we set $Q_{1} = \cdots = Q_{n} = 0$, then the isomorphism $\Psi^{Q}$ above specialize to the $R(T)$-algebra isomorphism
\begin{equation} \label{eq:Borel_intro}
R(T)[z_{1}^{\pm 1}, \ldots, z_{n}^{\pm 1}]/\CI \simeq K_{T}(G/B), 
\end{equation}
where $\CI$ is the ideal of $R(T)[z_{1}^{\pm 1}, \ldots, z_{n}^{\pm 1}]$ 
generated by 
\begin{equation}
e_{l}(z_{1}, \ldots, z_{n}, z_{n}^{-1}, \ldots, z_{1}^{-1}) - e_{l}(\e^{\ve_{1}}, \ldots, \e^{\ve_{n}}, \e^{-\ve_{n}}, \ldots, \e^{-\ve_{1}}), \quad 1 \le l \le n; 
\end{equation}
this isomorphism is just the classical Borel presentation (see \cite{PR}): 
\begin{equation}
R(T) \otimes_{R(T)^{W}} R(T) \simeq K_{T}(G/B)
\end{equation}
of the flag manifold of type $C_{n}$, 
where $R(T)^{W}$ is the subalgebra of $R(T)$ consisting of the $W$-invariant elements. 

The explicit form of the elements $F_{1}, \ldots, F_{n}$ come from certain identities in the $T$-equivariant $K$-group $K_{T}(\bQG)$ of the semi-infinite flag manifold $\bQG$ associated to $G = \Sp$, which is a (reduced) scheme of infinite type (see Section~\ref{sec:semi-infinite}), through the $R(T)$-module isomorphism $QK_{T}(G/B) \simeq K_{T}(\bQG)$ established in \cite{Kato}; for this isomorphism, see Theorem~\ref{thm:quantum=semi-infinite}. 
More precisely, we know certain identities in $K_{T}(\bQG)$, called the \emph{inverse Chevalley formula}; in types $A, D, E$, this formula is obtained in the general case by \cite{KNOS} and \cite{LNOS}, and in type $C$, it is obtained in some special cases by \cite{KNO}. 
From the inverse Chevalley formula, we deduce ``key'' relations (see \eqref{eq:system}) for the elements $\FF_{l} \in K_{T}(\bQG)$, $0 \le l \le n$, defined in Definition~\ref{def:FFl}. 
These key relations can be regarded as recurrence relations for $\FF_{l}$ with coefficients in $R(T)$ from which the $\FF_{l} \in K_{T}(\bQG)$, $0 \leq l \leq n$, are uniquely determined. Hence, by solving these recurrence relations, 
we obtain an explicit description (see Theorem~\ref{thm:relation_semi-infinite}) 
of $\FF_{l}$, $0 \le l \le n$, as elements of $R(T)$. 
Finally, by sending the $\FF_{l}$, $0 \le l \le n$, to $QK_{T}(G/B)$ through the isomorphism $QK_{T}(G/B) \simeq K_{T}(\bQG)$, we conclude that some explicit relations hold in $QK_{T}(G/B)$ (see Corollary~\ref{cor:relation_QK}). 
Thus we can figure out the explicit form, given in Definition~\ref{def:Fl}, of the generators of the ideal $\CI^{Q}$. 

Finally, in order to complete the proof of Theorem~\ref{thm:main}, 
we use Nakayama-type arguments. Namely, we make use of a lemma (see Lemma~\ref{lem:quotient_isomorphism_lemma}), due to Gu--Mihalcea--Sharpe--Zou \cite{GMSZ}, in commutative ring theory. By this lemma, we can conclude that \eqref{eq:Borel_intro} ``implies'' \eqref{eq:quantum_Borel_intro}.

This paper is organized as follows. 
In Section~\ref{sec:preliminary}, we fix our basic notation for root systems, in particular, for the root system of type $C$. 
Also, we briefly recall the definition of the $T$-equivariant quantum $K$-ring $QK_{T}(G/B)$ of the flag manifold $G/B$. 
In Section~\ref{sec:main}, we state a Borel-type presentation of $QK_{T}(G/B)$ in type $C$, which is the main result of this paper. 
In Section~\ref{sec:semi-infinite}, we recall the definition of the $T$-equivariant $K$-group $K_{T}(\bQG)$ of the semi-infinite flag manifold $\bQG$, and its relationship with $QK_{T}(G/B)$. 
In addition, we review the inverse Chevalley formula for $K_{T}(\bQG)$, which is used in the proof of our Borel-type presentation. 
In Section~\ref{sec:key}, we deduce key relations in $K_{T}(\bQG)$ by using the inverse Chevalley formula. 
In Section~\ref{sec:proof}, we give a proof of our Borel-type presentation of $QK_{T}(G/B)$ in type $C$.

\subsection*{Acknowledgments}
The authors are grateful to Takeshi Ikeda for helpful discussions. 
The second author would like to thank Leonardo Mihalcea for suggesting that we should make use of Lemma~\ref{lem:finitely_generated} to show the finite generation of a quotient ring. 
T.K. was partly supported by JSPS Grants-in-Aid for Scientific Research 22J00874, 22KJ2908, and 24K22842. 
S.N. was partly supported by JSPS Grant-in-Aid for Scientific Research (C) 21K03198.

\section{Basic setting} \label{sec:preliminary}
We fix our basic notation for root systems, in particular, for the root system of type $C$. Also, we briefly recall the definition of the torus-equivariant (small) quantum $K$-ring of flag manifolds. 

\subsection{Notation for root systems}

Let $G$ be a connected, simply-connected, simple algebraic group over $\BC$, $B$ its Borel subgroup, $T \subset B$ a maximal torus, and $I$ the index set for the simple roots of $G$. 
Let $P = \sum_{i \in I} \BZ \varpi_{i}$ be the weight lattice of $G$, with $\varpi_{i}$ the $i$-th fundamental weight, and $Q^{\vee} = \sum_{i \in I} \BZ \alpha_{i}^{\vee}$ the coroot lattice of $G$, with $\alpha_{i}^{\vee}$ the coroot such that $\varpi_{i}(\alpha_{j}^{\vee}) = \delta_{i,j}$ for $i, j \in I$; 
we set $Q^{\vee, +} := \sum_{i \in I} \BZ_{\ge 0} \alpha_{i}^{\vee}$. 
In addition, let $W$ be the Weyl group of $G$, which is generated by the simple reflections $s_{i}$, $i \in I$. 
For $\lambda \in P$, we denote by $\e^{\lambda}$ the character of the one-dimensional representation of $B$ whose weight is $\lambda$. 
Then the representation ring $R(T)$ of $T$ is written as $R(T) = \sum_{\lambda \in P} \BZ \e^{\lambda}$, with product given by $\e^{\lambda} \cdot \e^{\mu} := \e^{\lambda+\mu}$ for $\lambda, \mu \in P$; 
note that $R(T)$ is identified with the group algebra $\BZ[P]$ of $P$, on which  $W$ acts by $w \cdot \e^{\lambda} := \e^{w(\lambda)}$ for $w \in W$ and $\lambda \in P$. 

\subsection{\texorpdfstring{The root system of type $C_{n}$}{The root system of type Cn}} \label{sec:Cn}
We fix the notation for the root system of type $C_{n}$. 
In this subsection, let $G = \Sp$ be the symplectic group of rank $n$. 
Let $\{\ve_{1}, \ldots, \ve_{n}\}$ be the standard basis for $P$. 
Then the fundamental weights of $G$ are $\{\ve_{1}, \ve_{1}+\ve_{2}, \ldots, \ve_{1}+\ve_{2}+\cdots+\ve_{n}\}$, and the set $\Delta^{+}$ of positive roots of $G$ is $\Delta^{+} = \{\ve_{i} \pm \ve_{j} \mid 1 \le i < j \le n\} \cup \{2\ve_{j} \mid 1 \le j \le n\}$, with the simple roots $\{\ve_{i} - \ve_{i+1} \mid 1 \le i \le n-1\} \cup \{2\ve_{n}\}$. 
It follows that $\BZ[P] = \BZ[\e^{\pm \ve_{1}}, \ldots, \e^{\pm \ve_{n}}]$. 

The Weyl group $W \simeq S_{n} \ltimes (\BZ / 2 \BZ)^{n}$ can be regarded as the group of signed permutations on $[1, \overline{1}] := \{1 < 2 < \cdots < n < \overline{n} < \cdots < \overline{2} < \overline{1}\}$. 
In particular, the reflection $s_{(i, j)}$, $1 \le i < j \le n$, corresponding to the root $(i, j) := \ve_{i} - \ve_{j}$ acts on $[1, \overline{1}]$ as follows: 
for $1 \le k \le n$, 
\begin{align}
s_{(i, j)} (k) = \begin{cases} 
j & \text{if $k = i$,} \\ 
i & \text{if $k = j$,} \\ 
k & \text{if $k \not= i, j$;}
\end{cases} \\ 
s_{(i, j)} (\overline{k}) = \begin{cases} 
\overline{j} & \text{if $k = i$,} \\ 
\overline{i} & \text{if $k = j$,} \\ 
\overline{k} & \text{if $k \not= i, j$.}
\end{cases}
\end{align}
The reflection $s_{(i, \overline{j})}$, $1 \le i < j \le n$, corresponding to the root $(i, \overline{j}) := \ve_{i} + \ve_{j}$ acts on $[1, \overline{1}]$ as follows: for $1 \le k \le n$, 
\begin{align}
s_{(i, \overline{j})} (k) = \begin{cases} 
\overline{j} & \text{if $k = i$,} \\ 
\overline{i} & \text{if $k = j$,} \\ 
k & \text{if $k \not= i, j$;}
\end{cases} \\ 
s_{(i, \overline{j})} (\overline{k}) = \begin{cases} 
j & \text{if $k = i$,} \\ 
i & \text{if $k = j$,} \\ 
\overline{k} & \text{if $k \not= i, j$.}
\end{cases}
\end{align}
In addition, the reflection $s_{(i, \overline{i})}$, $1 \le i \le n$, corresponding to the root $(i, \overline{i}) := 2\ve_{i}$ acts on $[1, \overline{1}]$ as follows: for $1 \le k \le n$, 
\begin{align}
s_{(i, \overline{i})} (k) = \begin{cases}
\overline{i} & \text{if $k = i$,} \\ 
k &\text{if $k \not= i$;}
\end{cases} \\ 
s_{(i, \overline{i})} (\overline{k}) = \begin{cases}
i & \text{if $k = i$,} \\ 
\overline{k} &\text{if $k \not= i$.}
\end{cases}
\end{align}
The action of $W$ on $P$ is a natural extension of that on $[1, \overline{1}]$, written as: $w \cdot \ve_{k} = \ve_{w(k)}$ for $w \in W$ and $k = 1, \ldots, n$, where we set $\ve_{\overline{j}} := -\ve_{j}$ for $1 \le j \le n$. 
We sometimes write $w \in W$ as $w = [w(1), w(2), \ldots, w(n)]$ by regarding it as a signed permutation as above; this notation is called the \emph{window notation}. 

\subsection{\texorpdfstring{The quantum $K$-ring of flag manifolds}{The quantum K-ring of flag manifolds}}

Let $G$ be of an arbitrary type, $G/B$ the flag manifold, and $K_{T}(G/B)$ the (ordinary) $T$-equivariant $K$-ring of $G/B$. 
The \emph{$T$-equivariant (small) quantum $K$-ring} $QK_{T}(G/B)$ of the flag manifold $G/B$, defined by Givental \cite{Givental} and Lee \cite{Lee}, is, as a  module, 
\begin{equation}
QK_{T}(G/B) := K_{T}(G/B) \otimes_{R(T)} R(T)\bra{Q_{1}, \ldots, Q_{n}}, 
\end{equation}
where $n$ is the rank of $G$; here, $R(T)\bra{Q_{1}, \ldots, Q_{n}}$ denotes the formal power series ring in the (Novikov) variables $Q_{1}, \ldots, Q_{n}$. 
Under the quantum product $\star$, defined in terms of the 2-point and 3-point (genus 0, equivariant) $K$-theoretic Gromov-Witten invariants, 
$QK_{T}(G/B)$ becomes an associative, commutative algebra (see \cite{Givental} and \cite{Lee} for details). 
For $\xi = \sum_{i \in I} c_{i} \alpha_{i}^{\vee} \in Q^{\vee, +}$, we set $Q^{\xi} := \prod_{i \in I} Q_{i}^{c_{i}}$. 

For $w \in W$, we define the (opposite) \emph{Schubert variety} $X^{w} \subset G/B$ by $X^{w} := \overline{B^{-}wB/B}$, where $B^{-}$ is the opposite Borel subgroup of $G$ such that $B \cap B^{-} = T$, and $\overline{{}\cdot{}}$ denotes the Zariski closure; let $\CO^{w}$ denote the structure sheaf of $X^{w}$. 
Then it is well-known that the classes $[\CO^{w}]$, $w \in W$, of $\CO^{w}$ in $K_{T}(G/B)$ form a basis of $K_{T}(G/B)$ as an $R(T)$-module, which are called 
the (opposite) \emph{Schubert classes}. 
It follows that the (opposite) Schubert classes $[\CO^{w}]$, $w \in W$, also form a basis of $QK_{T}(G/B)$ as an $R(T)\bra{Q_{1}, \ldots, Q_{n}}$-module. 

To each $\lambda \in P$, 
we can associate the line bundle $G \times_{B} \BC_{-\lambda} \twoheadrightarrow G/B$ over $G/B$, denoted by $\CO_{G/B}(\lambda)$, where $\BC_{-\lambda}$ is the one-dimensional representation of $B$ whose weight is $- \lambda$; let 
$[\CO_{G/B}(\lambda)] \in K_{T}(G/B) \subset QK_{T}(G/B)$ denote the class corresponding to the line bundle $\CO_{G/B}(\lambda)$. 

\section{\texorpdfstring{Borel-type presentation of $QK_{T}(G/B)$}{Borel-type presentation of QKT(G/B)}} \label{sec:main}
Let $G = \Sp$ be the symplectic group of rank $n$. We give a presentation of $QK_{T}(G/B)$ as an explicit quotient of a certain Laurent polynomial ring. 

For a commutative ring $R$, we denote by $R\bra{Q} := R\bra{Q_{1}, \ldots, Q_{n}}$ the formal power series ring in the (Novikov) variables $Q_{1}, \ldots, Q_{n}$. 
For $1 \le k \le 2n$, let $e_{k}(x_{1}, \ldots, x_{2n})$ be the $k$-th elementary symmetric polynomial in the ($2n$) variables $x_{1}, \ldots, x_{2n}$, i.e., 
\begin{equation}
e_{k}(x_{1}, \ldots, x_{2n}) := \sum_{1 \le i_{1} < \cdots < i_{k} \le 2n} x_{i_{1}} x_{i_{2}} \cdots x_{i_{k}}. 
\end{equation} 
For $0 \le l \le n$, we define an element $E_{l} \in R(T)$ by 
\begin{equation}
E_{l} := e_{l}(\e^{\ve_{1}}, \ldots, \e^{\ve_{n}}, \e^{-\ve_{n}}, \ldots, \e^{-\ve_{1}}). 
\end{equation}

Also, we consider the Laurent polynomial ring $(R(T)\bra{Q})[z_{1}^{\pm 1}, \ldots, z_{n}^{\pm 1}]$ with coefficients in the formal power series ring $R(T)\bra{Q} = R(T)\bra{Q_1, \ldots, Q_n}$. 

\begin{defn}
Let $I \subset [1, \overline{1}]$. 
\begin{enu}
\item For $1 \le j \le n$, we define $\zeta_{I}^{\poly}(j) \in \BZ[Q] \subset \BZ\bra{Q}$ by 
\begin{equation}
\zeta_{I}^{\poly}(j) := \begin{cases}
1 - Q_{j} & \text{if $j \in I$ and $j+1 \notin I$,} \\ 
1 & \text{otherwise,} 
\end{cases}
\end{equation}
where $n+1$ is understood to be $\overline{n}$. 

\item For $2 \le j \le n$, we define $\zeta_{I}^{\poly}(\overline{j}) \in \BZ\bra{Q}$ by 
\begin{equation}
\zeta_{I}^{\poly}(\overline{j}) := \begin{cases}
1 + \dfrac{Q_{j-1} Q_{j} \cdots Q_{n}}{1 - Q_{j-1}} & \text{if $I$ is of the form $\{ \cdots < j-1 < \overline{j+1} < \cdots \}$,} \\ 
1 -Q_{j-1} & \text{if $\overline{j} \in I$ and $\overline{j-1} \notin I$,} \\ 
1 & \text{otherwise.}
\end{cases}
\end{equation}
\end{enu}
\end{defn}

\begin{exm}
Let $n = 4$ and $I = \{2, 3, \overline{3}, \overline{1}\}$. Then the elements $\zeta_{i}^{\poly}(j)$ for $1 \le j \le \overline{1}$ are as in the following Table~\ref{tab:zeta_list_example}. 
\begin{table}[h]
\centering
\caption{The list of $\zeta_{I}^{\poly}(j)$ for $1 \le j \le \overline{1}$}
\label{tab:zeta_list_example}
\begin{tabular}{|c|cccccccc|} \hline 
$j$ & $1$ & $2$ & $3$ & $4$ & $\rule{0em}{2.5ex}\overline{4}$ & $\overline{3}$ & $\overline{2}$ & $\overline{1}$ \\ \hline
$\zeta_{I}^{\poly}(j)$ & $1$ & $1$ & $1-Q_{3}$ & $1$ & $\rule{0em}{3.7ex}1 + \dfrac{Q_{3}Q_{4}}{1-Q_{3}}$ & $1$ & $1-Q_{1}$ & $1$ \\ \hline
\end{tabular}
\end{table}
\end{exm}

We set $z_{\overline{j}} := z_{j}^{-1}$ for $1 \le j \le n$. 
\begin{defn} \label{def:Fl}
For $0 \le l \le 2n$, we define $F_{l} \in (R(T)\bra{Q})[z_{1}^{\pm 1}, \ldots, z_{n}^{\pm 1}]$ by 
\begin{equation}
F_{l} := \sum_{\substack{I \subset [1, \overline{1}] \\ |I| = l}} \left( \prod_{1 \le j \le \overline{1}} \zeta_{I}^{\poly}(j) \right) \left( \prod_{j \in I} z_{j} \right). 
\end{equation}
\end{defn}

\begin{rem} \label{rem:specialization_Fl}
Under the specialization $Q_{1} = \cdots = Q_{n} = 0$, the $F_{l}$, $0 \le l \le 2n$, become: 
\begin{equation}
F_{l}|_{Q_{1} = \cdots = Q_{n} = 0} = \sum_{\substack{I \subset [1, \overline{1}] \\ |I| = l}} \left( \prod_{j \in I} z_{j} \right) = e_{l}(z_{1}, \ldots, z_{n}, z_{n}^{-1}, \ldots, z_{1}^{-1}).  
\end{equation}

\end{rem}

\begin{defn}
We define an ideal $\CI^{Q}$ of $(R(T)\bra{Q})[z_{1}^{\pm 1}, \ldots, z_{n}^{\pm 1}]$ to be the one generated by the elements $F_{l} - E_{l}$, $1 \le l \le n$. 
\end{defn}

The main result of this paper is the following. 
\begin{thm} \label{thm:main}
There exists an $R(T)\bra{Q}$-algebra isomorphism 
\begin{equation} \label{eq:isomorphism}
\map[\xrightarrow{\sim}]{\Psi^{Q}}{(R(T)\bra{Q})[z_{1}^{\pm 1}, \ldots, z_{n}^{\pm 1}]/\CI^{Q}}{QK_{T}(G/B)}{}{z_{j} + \CI^{Q}}{\dfrac{1}{1-Q_{j}} [\CO_{G/B}(-\ve_{j})],}{\quad 1 \le l \le n.}
\end{equation}
\end{thm}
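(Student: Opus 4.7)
The plan is to follow the strategy outlined in the introduction: first verify that the defining relations of $\CI^{Q}$ are satisfied in $QK_{T}(G/B)$ under the prescribed substitution, so that $\Psi^{Q}$ is a well-defined $R(T)\bra{Q}$-algebra homomorphism, and then upgrade the classical Borel presentation \eqref{eq:Borel_intro} to the quantum one by a Nakayama-type argument.

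Concretely, letting $\widetilde{\Psi}^{Q}$ be the candidate $R(T)\bra{Q}$-algebra map from $(R(T)\bra{Q})[z_{1}^{\pm 1}, \ldots, z_{n}^{\pm 1}]$ to $QK_{T}(G/B)$ sending $z_{j}$ to $\frac{1}{1-Q_{j}}[\CO_{G/B}(-\ve_{j})]$, the first task is to prove $\widetilde{\Psi}^{Q}(F_{l}) = E_{l}$ in $QK_{T}(G/B)$ for each $1 \le l \le n$. I would transport this identity to the semi-infinite flag manifold via Kato's isomorphism $QK_{T}(G/B) \simeq K_{T}(\bQG)$ (Theorem~\ref{thm:quantum=semi-infinite}), where the natural lifts of the $F_{l}$ are the elements $\FF_{l}$ of Definition~\ref{def:FFl}. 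Applying the inverse Chevalley formula of \cite{KNO} to the terms of $\FF_{l}$ should produce the key system \eqref{eq:system}; since this system is a triangular (in $l$) recurrence with coefficients in $R(T)$, it determines the $\FF_{l}$ uniquely. Solving the recurrence explicitly yields $\FF_{l} = E_{l} \in R(T) \subset K_{T}(\bQG)$ (Theorem~\ref{thm:relation_semi-infinite}), and pulling this identity back through Kato's isomorphism gives $F_{l} = E_{l}$ in $QK_{T}(G/B)$ (Corollary~\ref{cor:relation_QK}), so $\widetilde{\Psi}^{Q}$ descends to $\Psi^{Q}$.

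To prove that $\Psi^{Q}$ is in fact an isomorphism, I would invoke the quotient-isomorphism lemma of Gu--Mihalcea--Sharpe--Zou (Lemma~\ref{lem:quotient_isomorphism_lemma}) relative to the ideal $(Q_{1}, \ldots, Q_{n}) \subset R(T)\bra{Q}$. Setting $Q = 0$ collapses $\CI^{Q}$ to the classical ideal $\CI$ by Remark~\ref{rem:specialization_Fl}, and $\Psi^{Q}|_{Q=0}$ becomes the classical Borel presentation \eqref{eq:Borel_intro} from \cite{PR}, which is an isomorphism. To apply the lemma I also need finite generation of the source quotient as an $R(T)\bra{Q}$-module; this I would deduce from the classical finite generation via Lemma~\ref{lem:finitely_generated} (the argument acknowledged as Mihalcea's suggestion), after which Nakayama's lemma applied to the cokernel and kernel of $\Psi^{Q}$ does the job.

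I expect the genuinely hard step to be the middle one: deriving the closed system \eqref{eq:system} from the inverse Chevalley formula in type $C$, where only partial cases of that formula are presently available in \cite{KNO}, and then solving the recurrence so that the resulting $R(T)$-valued coefficients match on the nose the combinatorially delicate generators of Definition~\ref{def:Fl}, including the rational correction $1 + Q_{j-1}Q_{j}\cdots Q_{n}/(1-Q_{j-1})$ attached to the ``skip pattern'' $\{\cdots < j-1 < \overline{j+1} < \cdots\}$. Once those explicit relations are established, the Nakayama-style conclusion is conceptually routine.
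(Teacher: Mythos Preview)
Your proposal is correct and follows essentially the same route as the paper: establish the relations $\FF_{l}=E_{l}$ on the semi-infinite side via the inverse Chevalley formula and the recurrence \eqref{eq:system}, transport them back through Kato's isomorphism to make $\Psi^{Q}$ well-defined, and then conclude by the Nakayama-type Lemma~\ref{lem:quotient_isomorphism_lemma} together with Lemma~\ref{lem:finitely_generated} and the classical Borel presentation at $Q=0$. The only refinement worth flagging is that the paper does not extract \eqref{eq:system} directly from inverse Chevalley applied to $\FF_{l}$; rather it first obtains a single ``base'' relation \eqref{eq:initial} (using the symmetry $\FF_{k}=\FF_{2n-k}$ of Proposition~\ref{prop:symmetry}) and then generates the remaining relations by successively applying Demazure operators $\SD_{1},\SD_{2},\ldots$---exactly the part you correctly anticipated as the delicate step.
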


\section{\texorpdfstring{Equivariant $K$-groups of semi-infinite flag manifolds}{Equivariant K-groups of semi-infinite flag manifolds}} \label{sec:semi-infinite}
In this section, we recall the definition of the torus-equivariant $K$-group of semi-infinite flag manifolds, and its relationship with the torus-equivariant quantum $K$-ring. 
Also, we briefly review the inverse Chevalley formula for the torus-equivariant $K$-group of semi-infinite flag manifolds. 

\subsection{\texorpdfstring{Semi-infinite flag manifolds and the quantum $K$-ring}{Semi-infinite flag manifolds and the quantum K-ring}}
Following \cite[\S 3.1]{MNS} (which is based on \cite[\S1.4 and \S 1.5]{Kato}), we recall semi-infinite flag manifolds and their equivariant $K$-groups. 
Let $G$ be a connected, simply-connected, simple algebraic group over $\BC$, $B$ its Borel subgroup, $T \subset B$ a maximal torus, and $N$ the unipotent radical of $B$. 
The \emph{semi-infinite flag manifold} $\bQG^{\rat}$ is a (reduced) ind-scheme of ind-infinite type whose set of $\BC$-valued points is $G(\BC\pra{z})/(T(\BC) \cdot N(\BC\pra{z}))$. 

Let $\BI$ be the Iwahori subgroup of $G(\BC\bra{z})$, which is the preimage of $B$ under the evaluation map $G(\BC\bra{z}) \rightarrow G$, $z \mapsto 0$. Then, under the action of $\BI$ on $\bQG^{\rat}$, 
the set of $\BI$-orbits are labeled by the affine Weyl group $W_{\af} = \{wt_{\xi} \mid w \in W, \ \xi \in Q^{\vee}\} \simeq W \ltimes Q^{\vee}$ of $G$, where $t_{\xi}$ for $\xi \in Q^{\vee}$ denotes the translation element corresponding to $\xi$. 
For each $x \in W_{\af}^{\geq 0} := \{wt_{\xi} \in W_{\af} \mid w \in W, \ \xi \in Q^{\vee,+}\}$, we denote by $\bQG(x)$ the closure of the $\BI$-orbit labeled by $x$; 
this is a (reduced) closed subscheme of the (reduced) scheme $\bQG := \bQG{e}$ of infinite type, called the \emph{semi-infinite Schubert variety} associated to $x$, where $e \in W_{\af}$ is the identity element. 
For $x \in W_{\af}^{\geq 0}$, we denote by $\CO_{\bQG(x)}$ the structure sheaf of $\bQG(x)$. 
It is known that there exists a closed embedding $\bQG \subset \BP := \prod_{i \in I} \BP(L(\vpi_{i}) \otimes_{\BC} \BC\bra{z})$, where for a $\BC$-vector space $V$, $\BP(V)$ is the projective space associated to $V$, and $L(\lambda)$ for $\lambda \in P^{+}$ is the irreducible highest weight $G$-module of highest weight $\lambda$. 
We can define the line bundle $\CO_{\bQG}(\lambda)$ over $\bQG$ for each $\lambda = \sum_{i \in I} m_{i} \vpi_{i} \in P$ as the restriction of the line bundle $\boxtimes_{i \in I} \CO(m_{i})$ over $\BP$; 
for $\lambda, \mu \in P$, we have $\CO_{\bQG}(\lambda) \otimes \CO_{\bQG}(\mu) = \CO_{\bQG}(\lambda + \mu)$. 

Let $\KTCQG$ denote the \emph{$(T \times \BC^{\ast})$-equivariant $K$-group} of $\bQG$, where the action of $\BC^{\ast}$ on $\bQG$ is induced from the loop rotation action $\BC^{\ast} \curvearrowright \BC\bra{z}$; we denote by $q$ the character of the loop rotation action. More precisely, $\KTCQG$ is the $\BZ[q, q^{-1}][P]$-module consisting of all ``convergent'' infinite linear combinations of the elements $[\CO_{\bQG(x)}]$, $x \in W_{\af}^{\ge 0}$, called the \emph{semi-infinite Schubert classes}; 
the precise meaning of ``convergence'' is described in \cite[Proposition~5.8]{KNS}. 
Note that we can define the tensor product $[\CO_{\bQG(x)}] \otimes [\CO_{\bQG}(\lambda)] \in K_{T \times \BC^{\ast}}(\bQG)$ for $x \in W_{\af}^{\ge 0}$ and $\lambda \in P$; for $\lambda \in P$, we have the class $[\CO_{\bQG}(\lambda)] = 
[\CO_{\bQG(e)}] \otimes [\CO_{\bQG}(\lambda)] \in K_{T \times \BC^{\ast}}(\bQG)$ .
Also, by taking the specialization of $\KTCQG$ at $q = 1$, we obtain the \emph{$T$-equivariant $K$-group} $\KTQG$ of $\bQG$, which turns out to be the $R(T)$-module consisting of all infinite linear combinations of the \emph{semi-infinite Schubert classes} $[\CO_{\bQG(x)}]$, $x \in W_{\af}^{\ge 0}$, with coefficients in $R(T)$; 
that is, we can uniquely write each element of $\KTQG$ as an infinite linear combination of the semi-infinite Schubert classes $[\CO_{\bQG(x)}]$, $x \in W_{\af}^{\ge 0}$, with coefficients in $R(T)$. 
It follows that $[\CO_{\bQG(x)}] \otimes [\CO_{\bQG}(\lambda)] \in \KTQG$ for $\lambda \in P$ and $x \in W_{\af}^{\ge 0}$; for $\lambda \in P$, we have the class $[\CO_{\bQG}(\lambda)] = 
[\CO_{\bQG(e)}] \otimes [\CO_{\bQG}(\lambda)] \in \KTQG$. 

\begin{thm}[{\cite[Theorems~3.11 and 4.17]{Kato}}] \label{thm:quantum=semi-infinite}
There exists an isomorphism 
\begin{equation}
\Phi: QK_{T}(G/B) \xrightarrow{\sim} \KTQG
\end{equation}
of $R(T)$-modules such that 
\begin{itemize}
\item $\Phi(\e^{\mu}Q^{\xi}[\CO^{w}]) = \e^{-\mu}[\CO_{\bQG(wt_{\xi})}]$ for $\mu \in P$, $\xi \in Q^{\vee, +}$, and $w \in W$; 
\item for $\CZ \in QK_{T}(G/B)$, we have 
\begin{equation} \label{eq:compatible_anti-dominant}
\Phi(\CZ \star [\CO_{G/B}(-\vpi_{i})]) = \Phi(\CZ) \otimes [\CO_{\bQG}(w_{\circ}\vpi_{i})], \quad i \in I, 
\end{equation}
where $w_{\circ}$ denotes the longest element of $W$.
\end{itemize} 
\end{thm}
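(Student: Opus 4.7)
The plan is to establish the isomorphism in two stages. First, I would define $\Phi$ on Schubert basis elements by the prescribed formula
\begin{equation*}
\Phi(\e^{\mu} Q^{\xi} [\CO^{w}]) := \e^{-\mu}[\CO_{\bQG(wt_{\xi})}],
\end{equation*}
and extend by convergent infinite sums using the topology on $\KTQG$. Since both sides admit a (topological) basis indexed by pairs $(w, \xi) \in W \times Q^{\vee,+}$, this yields a bijective $\BZ$-linear map that is $R(T)$-linear up to the involution $\e^{\mu} \mapsto \e^{-\mu}$, which is the ``twisted'' $R(T)$-module isomorphism asserted. The only nontrivial point at this stage is to check that convergent sums on the quantum side map to convergent sums on the semi-infinite side; this can be verified by comparing the natural $Q^{\vee,+}$-filtrations by coroot/Novikov degree on the two sides.

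Second, and more substantively, one must verify the compatibility~\eqref{eq:compatible_anti-dominant}. By $R(T)\bra{Q}$-linearity it suffices to take $\CZ = [\CO^{w}]$. Here I would compare two Chevalley-type expansions: on the left, the equivariant quantum $K$-theoretic Chevalley formula expresses $[\CO^{w}] \star [\CO_{G/B}(-\vpi_{i})]$ in the Schubert basis via quantum Lakshmibai--Seshadri paths; on the right, the semi-infinite Chevalley formula expresses $[\CO_{\bQG(w)}] \otimes [\CO_{\bQG}(w_{\circ}\vpi_{i})]$ in the semi-infinite Schubert basis, parametrized by the same combinatorial objects. A term-by-term comparison, tracking the $w_{\circ}$-twist and the $\e^{\mu} \mapsto \e^{-\mu}$ involution, matches the two expansions under $\Phi$.

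The main obstacle is precisely this matching of the quantum $K$ and semi-infinite Chevalley formulas: while both are governed by the combinatorics of the quantum Bruhat graph, the bookkeeping of equivariant weights, Novikov degrees, and the longest-element twist is delicate, and historically this matching required the development of quantum LS path theory. An alternative route, closer to Kato's original geometric approach, is to realize both products as pushforwards from the Drinfeld quasi-map space $\overline{\mathbf{Q}}(G/B)_{\xi}$, where the line bundles $\CO_{G/B}(-\vpi_{i})$ and $\CO_{\bQG}(w_{\circ}\vpi_{i})$ pull back from a common tautological line bundle; in this formulation the compatibility reduces to a projection-formula identity for the relevant evaluation morphism. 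Once the compatibility holds for each fundamental anti-dominant weight $-\vpi_{i}$, it extends to the entire ring because the classes $\{[\CO_{G/B}(-\vpi_{i})]\}_{i \in I}$ generate $QK_{T}(G/B)$ over $R(T)\bra{Q}$, as is already visible in the classical Borel presentation of $K_{T}(G/B)$.
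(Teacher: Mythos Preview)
This theorem is not proved in the paper at all: it is quoted verbatim from \cite[Theorems~3.11 and 4.17]{Kato} and used as a black box throughout. There is therefore no ``paper's own proof'' to compare your proposal against; the authors simply invoke Kato's result.

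That said, your sketch is a reasonable outline of what such a proof involves, and you have correctly identified the two main ingredients (the basis-matching definition of $\Phi$, and the compatibility with line-bundle multiplication) as well as the two historical routes to the latter (term-by-term comparison of the quantum and semi-infinite Chevalley formulas via quantum LS paths, versus Kato's geometric argument via graph/quasi-map spaces). A couple of small remarks: your final paragraph about extending from the $-\vpi_{i}$ to ``the entire ring'' is not needed here, since the theorem as stated only asserts compatibility for the fundamental anti-dominant weights; and your observation that the $R(T)$-linearity is twisted by $\e^{\mu} \mapsto \e^{-\mu}$ is correct and worth keeping in mind, even though the paper's phrasing ``isomorphism of $R(T)$-modules'' glosses over this.
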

We warn the reader that the convention for line bundles in this paper is different from that in \cite{Kato} by the twist coming from the involution $-w_{\circ}$, and so we need $w_{\circ}$ on the right-hand side of \eqref{eq:compatible_anti-dominant}. 
If $G$ is of type $C$, then $w_{\circ}\lambda = -\lambda$ for all $\lambda \in P$, and hence 
the right-hand side of \eqref{eq:compatible_anti-dominant} is $\Phi(\CZ) \otimes [\CO_{\bQG}(-\vpi_{i})]$. 

Note that for a general $\lambda \in P$, the $R(T)$-module isomorphism $\Phi$ is not necessarily compatible with the quantum product $\star$ with $\CO_{G/B}(\lambda)$ in $QK_{T}(G/B)$ and the tensor product $\otimes$ with $\CO_{\bQG}(\lambda)$ in $\KTQG$. 
However, if $G$ is of type $C$, i.e., if $G = \Sp$, then we have the following equalities, which play an important role in this paper. 
\begin{prop}[{\cite[Proposition~5.2]{MNS}}] \label{prop:line_bundle_correspondence}
Let $G = \Sp$ be the symplectic group of rank $n$. For $\CZ \in QK_{T}(G/B)$, we have 
\begin{align}
\Phi \left( \mathcal{Z} \star \left( \frac{1}{1-Q_{j}} [\CO_{G/B}(\ve_{j})] \right) \right) &= \Phi(\mathcal{Z}) \otimes [\CO_{\bQG}(\ve_{j})], \\ 
\Phi \left( \mathcal{Z} \star \left( \frac{1}{1-Q_{j-1}} [\CO_{G/B}(-\ve_{j})] \right) \right) &= \Phi(\mathcal{Z}) \otimes [\CO_{\bQG}(-\ve_{j})] 
\end{align}
for $1 \le j \le n$. 
\end{prop}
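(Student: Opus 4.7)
The strategy is to reduce the proposition to two explicit identities in $K_T(\bQG)$ for the images $\Phi([\CO_{G/B}(\pm\ve_j)])$ of the classical line-bundle classes, and then verify those identities by expanding both sides in the semi-infinite Schubert basis via the inverse Chevalley formula. The reduction is powered by Kato's compatibility (Theorem~\ref{thm:quantum=semi-infinite}) for anti-dominant fundamental weights, together with the type-$C$ relation $\ve_j = \vpi_j - \vpi_{j-1}$ (with $\vpi_0 := 0$). Note that in type $C$ we have $w_{\circ} = -\mathrm{id}$ on $P$, so Kato's intertwining becomes $\Phi(\CZ \star [\CO_{G/B}(-\vpi_i)]) = \Phi(\CZ) \otimes [\CO_{\bQG}(-\vpi_i)]$ for every $i \in I$.

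\textbf{Reduction.} Since $\otimes\,[\CO_{\bQG}(-\vpi_i)]$ is invertible on $K_T(\bQG)$, Kato's intertwining makes $\star\,[\CO_{G/B}(-\vpi_i)]$ invertible on $QK_T(G/B)$ for each $i$; in particular, the $\star$-inverse $[\CO_{G/B}(-\vpi_i)]^{\star -1}$ exists and satisfies $\Phi([\CO_{G/B}(-\vpi_i)]^{\star -1}) = [\CO_{\bQG}(\vpi_i)]$. Setting
$$\CL_j := [\CO_{G/B}(-\vpi_{j-1})] \star [\CO_{G/B}(-\vpi_j)]^{\star -1} \in QK_T(G/B),$$
two successive applications of Kato's intertwining (using commutativity of $\star$ to handle the two factors separately) give, for every $\CZ \in QK_T(G/B)$,
$$\Phi(\CZ \star \CL_j) = \Phi(\CZ) \otimes [\CO_{\bQG}(\vpi_j - \vpi_{j-1})] = \Phi(\CZ) \otimes [\CO_{\bQG}(\ve_j)].$$
Comparing with the first equality of the proposition, and using that $\Phi$ is injective and $[\CO^e]$ is the $\star$-identity, the first equality is equivalent to $\CL_j = \frac{1}{1-Q_j}\,[\CO_{G/B}(\ve_j)]$ in $QK_T(G/B)$. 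Applying $\Phi$ to both sides of this scalar identity yields the equivalent identity in $K_T(\bQG)$:
$$\Phi([\CO_{G/B}(\ve_j)]) = (1 - Q_j)\,[\CO_{\bQG}(\ve_j)].$$
An entirely analogous argument reduces the second equality to $\Phi([\CO_{G/B}(-\ve_j)]) = (1-Q_{j-1})\,[\CO_{\bQG}(-\ve_j)]$; the convention $Q_0 := 0$ makes the $j = 1$ case trivial, since $-\ve_1 = -\vpi_1$.

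\textbf{Proof of the reduced identities and main obstacle.} I would expand both sides of each reduced identity in the semi-infinite Schubert basis of $K_T(\bQG)$. For the left-hand side, apply the classical (non-quantum) $K$-theoretic Chevalley formula in $K_T(G/B)$ to expand $[\CO_{G/B}(\pm\ve_j)]$ as an $R(T)$-linear combination of Schubert classes $[\CO^w]$, and then use the first bullet of Theorem~\ref{thm:quantum=semi-infinite} (with $\mu = 0$ and $\xi = 0$) to translate each $[\CO^w]$ into $[\CO_{\bQG(w)}]$. For the right-hand side, expand $(1-Q_j)[\CO_{\bQG}(\ve_j)]$ (resp.\ $(1-Q_{j-1})[\CO_{\bQG}(-\ve_j)]$) in the semi-infinite Schubert basis via the inverse Chevalley formula reviewed in Section~\ref{sec:semi-infinite}. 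Under the Novikov identification $Q^{k\alpha_j^\vee} = Q_j^k$, the translation contributions $[\CO_{\bQG(w t_{k\alpha_j^\vee})}]$, $k \ge 0$ (resp.\ $[\CO_{\bQG(w t_{k\alpha_{j-1}^\vee})}]$) should collect into the geometric series $(1-Q_j)^{-1}$ (resp.\ $(1-Q_{j-1})^{-1}$), cancelling the prefactor and matching the Chevalley expansion. The principal technical hurdle is verifying this telescoping structure in detail: because the inverse Chevalley formula in type $C$ is at present established only in certain cases (\cite{KNO}), one must either confirm that the weights $\pm\ve_j$ lie within the tractable range of the existing formula, or else derive the required partial case directly from the combinatorics of the semi-infinite Bruhat order and the quantum Bruhat graph.
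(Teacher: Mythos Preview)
The paper does not prove this proposition; it is quoted verbatim from \cite[Proposition~5.2]{MNS} (the type-$A$ paper), and no argument is supplied here. So there is no ``paper's own proof'' to compare against---only the cited reference.

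Your reduction step is correct and is exactly the mechanism underlying the proof in \cite{MNS}: since $\ve_j = \vpi_j - \vpi_{j-1}$ and Kato's intertwining (Theorem~\ref{thm:quantum=semi-infinite}) holds for each $-\vpi_i$, the general statement follows once one knows the scalar identity $\Phi\bigl(\frac{1}{1-Q_j}[\CO_{G/B}(\ve_j)]\bigr) = [\CO_{\bQG}(\ve_j)]$ (and its $-\ve_j$ analog). One small notational point: on the $K_T(\bQG)$ side the prefactor should be written as $(1-\st_j)$, not $(1-Q_j)$, since $\Phi$ sends the Novikov variable $Q_j$ to the translation operator $\st_j$; this is how the paper would phrase the reduced identity.

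Your proposed verification has a genuine slip: expanding $[\CO_{\bQG}(\pm\ve_j)] = [\CO_{\bQG(e)}(\pm\ve_j)]$ in the semi-infinite Schubert basis is the \emph{forward} (Pieri--)Chevalley formula of \cite{KNS} or \cite{LNS}, not the \emph{inverse} Chevalley formula reviewed in Section~\ref{sec:semi-infinite}. The inverse formula goes the other way---it expresses $\e^{\mu}[\CO_{\bQG(x)}]$ as a combination of line-bundle-twisted classes---and is of no direct use for this expansion. Once you invoke the correct (forward) formula, the comparison you outline is exactly what \cite{MNS} carries out in type $A$: the semi-infinite Chevalley expansion of $[\CO_{\bQG(e)}(\ve_j)]$ has a single ``quantum tail'' in the $\st_j$-direction which, after multiplying by $(1-\st_j)$, telescopes to leave precisely the classical Chevalley expansion of $[\CO_{G/B}(\ve_j)]$ (transported through $\Phi$). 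The weights $\pm\ve_j$ are minuscule in type $C$, so the general Chevalley formula of \cite{LNS} applies without difficulty; your stated ``main obstacle'' about the range of the inverse formula is therefore a non-issue once the correct formula is used.
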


For each $i \in I$, we define an $R(T)$-linear endomorphism $\st_{i}$ of $\KTQG$ by 
\begin{equation}
\st_{i}([\CO_{\bQG(x)}]) := \left[\CO_{\bQG\left(xt_{\alpha_{i}^{\vee}}\right)}\right]
\end{equation}
for $x \in W_{\af}^{\ge 0}$. Then, for $i \in I$, $\FZ \in \KTQG$, and $\lambda \in P$, we have 
\begin{equation}
\st_{i}(\FZ) \otimes [\CO_{\bQG}(\lambda)] = \st_{i}(\FZ \otimes [\CO_{\bQG}(\lambda)]) 
\end{equation}
(see \cite[(3.2)]{MNS}). 

\subsection{\texorpdfstring{Inverse Chevalley formula in type $C$}{Inverse Chevalley formula in type C}}
The \emph{inverse Chevalley formula} is an explicit formula for the expansion of the product $\e^{\mu} [\CO_{\bQG(x)}]$ for $\mu \in P$ and $x \in W_{\af}^{\ge 0}$ as a linear combination of the elements $[\CO_{\bQG(y)}] \otimes [\CO_{\bQG}(\lambda)]$, $y \in W_{\af}^{\ge 0}$, $\lambda \in P$, with coefficients in $\BZ[q, q^{-1}]$; namely, the inverse Chevalley formula is an explicit formula of the following form: 
\begin{equation}
\e^{\mu} [\CO_{\bQG(x)}] = \sum_{y \in W_{\af}^{\ge 0}, \, \lambda \in P} d_{x, \mu}^{y, \mu} [\CO_{\bQG(y)}] \otimes [\CO_{\bQG}(\lambda)], 
\end{equation}
with $d_{x, \mu}^{y, \mu} \in \BZ[q, q^{-1}]$. 
For $x \in W_{\af}^{\ge 0}$ and $\lambda \in P$, we set $[\CO_{\bQG(x)}(\lambda)] := [\CO_{\bQG(x)}] \otimes [\CO_{\bQG}(\lambda)]$. 
In types $A, D, E$, the inverse Chevalley formula in the general case is given by \cite{KNOS} and \cite{LNOS}, and in type $C$, it is given by \cite{KNO} in some special cases. 
In this paper, we use the inverse Chevalley formula in type $C$. 

Assume that $G$ is of type $C_{n}$, i.e., $G = \Sp$. 
From \cite[Theorem~4.5]{KNO}, we obtain the following. 
\begin{thm} \label{thm:IC_1}
Let $1\le k \le n-1$. The following equality holds in $\KTCQG$: 
\begin{equation} \label{eq:IC_1}
\begin{split}
\e^{\ve_{1}}[\CO_{\bQG(s_{1}s_{2} \cdots s_{k})}] &= [\CO_{\bQG(s_{1}s_{2} \cdots s_{k})}(\ve_{k+1})] - [\CO_{\bQG(s_{1}s_{2} \cdots s_{k+1})}(\ve_{k+1})] \\ 
& \quad + q \sum_{j = 1}^{k} \left[\CO_{\bQG\left(s_{1}s_{2} \cdots s_{j-1} t_{\alpha_{j}^{\vee} + \alpha_{j+1}^{\vee} + \cdots + \alpha_{k}^{\vee}}\right)}(\ve_{j})\right] \\ 
& \quad - q \sum_{j = 1}^{k} \left[\CO_{\bQG\left(s_{1}s_{2} \cdots s_{j} t_{\alpha_{j}^{\vee} + \alpha_{j+1}^{\vee} + \cdots + \alpha_{k}^{\vee}}\right)} (\ve_{j})\right]. 
\end{split}
\end{equation}
\end{thm}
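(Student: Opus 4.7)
The plan is to specialize the general inverse Chevalley formula of \cite{KNO}, Theorem~4.5, to the particular data $\mu = \ve_{1}$ and $x = s_{1} s_{2} \cdots s_{k}$, and then translate the resulting combinatorial sum into the explicit four-term expression on the right-hand side of \eqref{eq:IC_1}. Since the statement is attributed to an existing general result, the work consists of unwinding the combinatorics in a specific case rather than developing new machinery.

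Concretely, I would first recall the structure of the formula in \cite{KNO}: it expresses $\e^{\mu}[\CO_{\bQG(x)}]$ as a finite sum indexed by certain decorated paths (in the quantum Bruhat graph for $C_{n}$, or an equivalent combinatorial model) starting at $x$ and governed by a chain of roots associated with $\mu$. Each path contributes a term of the form $\pm q^{d} [\CO_{\bQG(y)}(\lambda)]$, where $y$ is the endpoint of the path, $\lambda$ encodes the accumulated weight, and $d$ counts the number of quantum (non-Bruhat) edges traversed. Because $\ve_{1} = \vpi_{1}$ is a fundamental weight of the simplest kind for type $C_{n}$, the admissible chain of roots is short, and only a limited collection of paths will appear.

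Next, I would enumerate these admissible paths for the specific vertex $x = s_{1} s_{2} \cdots s_{k}$ and show that they split into exactly four families, each matching one of the four terms in \eqref{eq:IC_1}. The empty path (or equivalently the trivial move) yields $[\CO_{\bQG(s_{1} \cdots s_{k})}(\ve_{k+1})]$; a single Bruhat step sending $s_{1} \cdots s_{k}$ to $s_{1} \cdots s_{k+1}$ along the simple root $\alpha_{k+1}$ yields the negative second term; and paths that traverse a single quantum edge ``at position $j$'' (for each $j = 1, \ldots, k$) produce the translation factor $t_{\alpha_{j}^{\vee} + \alpha_{j+1}^{\vee} + \cdots + \alpha_{k}^{\vee}}$, with the Weyl part being either $s_{1} \cdots s_{j-1}$ or $s_{1} \cdots s_{j}$ depending on whether a further Bruhat step is taken, accounting for the two $q$-summations with opposite signs.

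The main obstacle will be the bookkeeping: carefully matching endpoints, line bundle twists, $q$-powers, and signs against the stated formula, and verifying that no additional paths contribute beyond the four families listed. I expect this to reduce to a careful but essentially routine inspection of which edges of the quantum Bruhat graph of type $C_{n}$ are reachable from $s_{1} s_{2} \cdots s_{k}$ along an $\ve_{1}$-chain, using standard length/Bruhat-order calculations for signed permutations. No genuinely new structural input beyond \cite{KNO} should be required.
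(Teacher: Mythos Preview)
Your proposal is correct and follows the same approach as the paper: specialize \cite[Theorem~4.5]{KNO} to $w = s_{1}\cdots s_{k}$ and enumerate the contributing admissible subsets via the quantum Bruhat graph criterion (Proposition~\ref{prop:criterion}), exactly as is done in detail for Theorem~\ref{thm:IC_2} in Appendix~\ref{sec:IC_proof}. The paper in fact leaves the proof of Theorem~\ref{thm:IC_1} to the reader as the easier analogue of that argument, so your plan matches its intent precisely.
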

Also, from \cite[Theorem~4.3]{KNO}, we obtain the following. 
\begin{thm} \label{thm:IC_2}
Let $1 \le k \le n$. The following equality holds in $\KTCQG$: 
\begin{equation} \label{eq:IC_2}
\begin{split}
& \e^{\ve_{1}}[\CO_{\bQG(s_{1} \cdots s_{n-1}s_{n}s_{n-1} \cdots s_{k})}] \\ 
& = [\CO_{\bQG(s_{1} \cdots s_{n-1}s_{n}s_{n-1} \cdots s_{k})}(-\ve_{k})] - [\CO_{\bQG(s_{1} \cdots s_{n-1}s_{n}s_{n-1} \cdots s_{k-1})}(-\ve_{k})] \\ 
& \quad + q \sum_{j = k+1}^{n} \left[\CO_{\bQG\left(s_{1} \cdots s_{n-1}s_{n}s_{n-1} \cdots s_{j} t_{\alpha_{k}^{\vee} + \alpha_{k+1}^{\vee} + \cdots + \alpha_{j-1}^{\vee}}\right)}(-\ve_{j})\right] \\ 
& \quad - q \sum_{j = k+1}^{n} \left[\CO_{\bQG\left(s_{1} \cdots s_{n-1}s_{n}s_{n-1} \cdots s_{j-1} t_{\alpha_{k}^{\vee} + \alpha_{k+1}^{\vee} + \cdots + \alpha_{j-1}^{\vee}}\right)}(-\ve_{j})\right] \\ 
& \quad + q \sum_{j = 1}^{k} \left[ \CO_{\bQG\left(s_{1}s_{2} \cdots s_{j-1} t_{\alpha_{j}^{\vee} + \alpha_{j+1}^{\vee} + \cdots + \alpha_{n}^{\vee}}\right)}(\ve_{j}) \right] \\ 
& \quad - q \sum_{j = 1}^{k} \left[ \CO_{\bQG\left(s_{1}s_{2} \cdots s_{j} t_{\alpha_{j}^{\vee} + \alpha_{j+1}^{\vee} + \cdots + \alpha_{n}^{\vee}}\right)}(\ve_{j}) \right]; 
\end{split}
\end{equation}
when $k = 1$, the second term $[\CO_{\bQG(s_{1} \cdots s_{n-1}s_{n}s_{n-1} \cdots s_{k-1})}(-\ve_{k})]$ on the right-hand side of the above formula is understood to be $0$. 
\end{thm}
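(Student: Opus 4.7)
The formula is a direct application of the general inverse Chevalley formula in type $C$ proved in \cite[Theorem~4.3]{KNO}. My plan is to exhibit Theorem~\ref{thm:IC_2} as the result of specializing that formula to the particular weight $\mu = \ve_{1}$ and the particular Weyl group element
\[
w_{k} := s_{1} s_{2} \cdots s_{n-1} s_{n} s_{n-1} \cdots s_{k},
\]
and then verifying, by direct combinatorial enumeration, that the resulting signed, $q$-weighted sum telescopes into the six families of terms on the right-hand side of \eqref{eq:IC_2}.

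The first step is to recall the combinatorial index set from \cite{KNO}. In the general formula, $\e^{\mu}[\CO_{\bQG(x)}]$ is expanded as a signed sum, with coefficients in $\BZ[q,q^{-1}]$, over certain admissible paths (of quantum alcove / semi-infinite Lakshmibai--Seshadri type) in the $W$-orbit of $\mu$; each path contributes $\pm q^{?}$ times a class $[\CO_{\bQG(y)}(\lambda)]$, where $y \in W_{\af}^{\geq 0}$ is obtained from $x$ by applying the affine reflections recorded along the path and $\lambda \in P$ is the final weight along the path. Since $\ve_{1}$ is minuscule for $\Sp$, the orbit $W\ve_{1} = \{\pm\ve_{1},\ldots,\pm\ve_{n}\}$ admits a uniform linear order, and the admissible paths starting from $w_{k}$ acquire a particularly transparent structure: each is determined by at most one ``drop'' of the minuscule vector.

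The second step is to enumerate these paths concretely. Writing $w_{k}$ in the window notation of Section~\ref{sec:Cn}, one checks directly that $w_{k}(\ve_{1}) = -\ve_{k}$, and that the reduced expression for $w_{k}$ splits naturally into a head $s_{1} \cdots s_{n-1} s_{n}$ and a tail $s_{n-1} \cdots s_{k}$. Each admissible path then falls into one of three types: (i) it takes no affine reflection and contributes a classical term with twist $-\ve_{k}$; (ii) it drops along the tail at position $j$ with $k+1 \leq j \leq n$, contributing the affine translation $t_{\alpha_{k}^{\vee} + \cdots + \alpha_{j-1}^{\vee}}$ and twist $-\ve_{j}$; or (iii) it drops along the head at position $j$ with $1 \leq j \leq k$, contributing the affine translation $t_{\alpha_{j}^{\vee} + \cdots + \alpha_{n}^{\vee}}$ and twist $+\ve_{j}$. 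Within each of the quantum families (ii) and (iii), consecutive admissible endpoints yield paired $\pm q$ contributions of the telescoping form visible in \eqref{eq:IC_2}, while the two classical terms in family (i) correspond to the two neighbors of the endpoint of the tail.

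The main obstacle I expect is the sign bookkeeping together with the edge-case verification. Specifically, I will need to match the signs predicted in \cite{KNO}, which arise from parities of down-edges in the quantum Bruhat graph, with the explicit $\pm$ signs in \eqref{eq:IC_2}; to verify that each affine Weyl group element indexing a class on the right-hand side lies in $W_{\af}^{\geq 0}$ so that the corresponding semi-infinite Schubert class is defined; and to handle the boundary case $k = 1$, in which the would-be second term $[\CO_{\bQG(w_{0})}(-\ve_{1})]$ does not correspond to an admissible path and must therefore be interpreted as $0$, as stated after the theorem. Once these checks are made, assembling the contributions yields precisely the six kinds of terms on the right-hand side, completing the proof.
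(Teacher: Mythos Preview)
Your approach is the paper's: specialize \cite[Theorem~4.3]{KNO} (restated in the appendix as Theorem~\ref{thm:IC_2nd-half}) to $w = s_{1}\cdots s_{n-1}s_{n}s_{n-1}\cdots s_{k}$ and enumerate the admissible subsets via the quantum Bruhat graph criterion of Proposition~\ref{prop:criterion}. Two points need correction before your plan will go through. First, you write $w_{k}(\ve_{1}) = -\ve_{k}$, but in window notation $w_{k} = [2,3,\ldots,k,\overline{1},k+1,\ldots,n]$, so $w_{k}(\ve_{1}) = \ve_{2}$; what is true is $w_{k}(\ve_{k}) = -\ve_{1}$, and since the left-hand side of \cite[Theorem~4.3]{KNO} is $\e^{-w\ve_{m}}[\CO_{\bQG(w)}]$ rather than $\e^{\mu}[\CO_{\bQG(w)}]$ for a freely chosen $\mu$, you must take $m = k$, not $m = 1$. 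Second, the enumeration is not ``at most one drop'': the third sum in Theorem~\ref{thm:IC_2nd-half} ranges over sequences $(j_{1},\ldots,j_{r})\in\CS_{\overline{k},j}$ of arbitrary length, and the substantive step (carried out explicitly in the paper's proof) is to show that for each $1\le j\le k$ the contributions from the two families $(\overline{k},\ldots,\overline{l},l,\ldots,j)$ and $(\overline{k},\ldots,\overline{l-1},l,\ldots,j)$ with $l>k$ cancel pairwise, leaving only the single chain $(j_{1},\ldots,j_{r}) = (\overline{k},k,k-1,\ldots,j)$. Your ``telescoping'' picture describes the final $B$-sum once a chain is fixed, but not this prior cancellation among chains, which is where the actual work lies.
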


We defer the proofs of \eqref{eq:IC_1} and \eqref{eq:IC_2} to Appendix~\ref{sec:IC_proof}. 

\subsection{Demazure operators}
Following \cite[\S A]{MNS}, we review Demazure operators acting on $\KTCQG$ and $\KTQG$. 
Let $G$ be of an arbitrary type. 
The \emph{nil-DAHA} $\BH_{0}$ is defined to be the $\BZ[\sq^{\pm 1}]$-algebra generated by $\ST_{i}$, $i \in I \sqcup \{0\}$, and $\SX^{\nu}$, $\nu \in P$, subject to certain defining relations (including the braid relations); 
see, for example, \cite[(A.2)--(A.6)]{MNS}). 
For $i \in I \sqcup \{0\}$, we set  $\SD_{i} := 1 + \ST_{i}$ for $i \in I \sqcup \{0\}$. 

By \cite[Theorem~6.5]{KNS} (see also \cite[\S 2.6]{Orr} and \cite[\S 3.1.2]{KNOS}), we have a left $\BH_{0}$-action on $K_{T \times \BC^{*}}(\bQG^{\rat})$. 
In particular, for $i \in I$, we have an action of $\SD_{i}$ on $\KTCQG$, which is called a Demazure operator; note that these Demazure operators satisfy the braid relations. 
Let $D_{i}$ (not to be confused with $\SD_{i}$) for $i \in I$ be an operator on $\BZ[P]$ given by 
\begin{equation}
D_{i}(\e^{\nu}) := \frac{\e^{\nu} - \e^{\alpha_{i}}\e^{s_{i}\nu}}{1-\e^{\alpha_{i}}}
\end{equation}
for $\nu \in P$, i.e., 
\begin{equation}
D_{i}(\e^{\nu}) = \begin{cases}
\e^{\nu}(1 + \e^{\alpha_{i}} + \e^{2\alpha_{i}} + \cdots + \e^{-\pair{\nu}{\alpha_{i}^{\vee}}\alpha_{i}}) & \text{if $\pair{\nu}{\alpha_{i}^{\vee}} \le 0$}, \\ 
0 & \text{if $\pair{\nu}{\alpha_{i}^{\vee}} = 1$}, \\ 
-\e^{\nu}(\e^{-\alpha_{i}} + \e^{-2\alpha_{i}^{\vee}} + \cdots + \e^{-(\pair{\nu}{\alpha_{i}^{\vee}}-1)\alpha_{i}}) & \text{if $\pair{\nu}{\alpha_{i}^{\vee}} \ge 2$}. 
\end{cases}
\end{equation}
\begin{lem}[{\cite[Lemma~A.2]{MNS}; \cite[\S 2.6]{Orr} and \cite[\S 3.1.2]{KNOS}}]
Let $i \in I$, and let $\xi \in Q^{\vee,+}$, $\nu, \lambda \in P$. 
In $\KTCQG$, we have 
\begin{equation}
\SD_{i}(\e^{\nu}[\CO_{\bQG(t_{\xi})}(\lambda)]) = D_{i}(\e^{\nu}) [\CO_{\bQG(t_{\xi})}(\lambda)]. 
\end{equation}
\end{lem}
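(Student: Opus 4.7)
The plan is to reduce the formula to two separate facts: a commutation identity in the nil-DAHA $\BH_0$, and an invariance property of the base class $[\CO_{\bQG(t_\xi)}(\lambda)]$ under $\ST_i$.

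First I would unpack the $\BH_0$-action on $\KTCQG$ by applying the Demazure--Lusztig commutation relation between $\ST_i$ and the multiplication operator $\SX^\nu$. From the defining relations of $\BH_0$ alluded to in \cite[(A.2)--(A.6)]{MNS}, one obtains an operator identity of the form
\[
\SD_i \SX^\nu = D_i(\SX^\nu) + \SX^{s_i \nu} \ST_i,
\]
where $D_i(\SX^\nu)$ is the nil-DAHA realization of the Demazure operator on $\BZ[P]$. Applying this to the class $[\CO_{\bQG(t_\xi)}(\lambda)]$ and recalling that $\SX^\nu$ acts on $\KTCQG$ as multiplication by $\e^\nu$, the left-hand side of the lemma becomes
\[
\SD_i(\e^\nu[\CO_{\bQG(t_\xi)}(\lambda)]) = D_i(\e^\nu)\,[\CO_{\bQG(t_\xi)}(\lambda)] + \e^{s_i\nu}\,\ST_i[\CO_{\bQG(t_\xi)}(\lambda)].
\]
So the whole problem is reduced to showing the vanishing $\ST_i[\CO_{\bQG(t_\xi)}(\lambda)] = 0$ for every $\xi \in Q^{\vee,+}$ and $\lambda \in P$.

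Second, I would establish that vanishing using the explicit description of the nil-DAHA action on semi-infinite Schubert classes from \cite[Theorem 6.5]{KNS} (cf.\ \cite[\S 2.6]{Orr} and \cite[\S 3.1.2]{KNOS}): the Demazure operator $\SD_i = 1 + \ST_i$ sends $[\CO_{\bQG(x)}]$ to either $[\CO_{\bQG(s_i x)}]$ or to $[\CO_{\bQG(x)}]$ according to whether $s_i x$ is larger or smaller than $x$ with respect to the (generic) semi-infinite Bruhat order, and it commutes with tensoring by $\CO_{\bQG}(\lambda)$. For $x = t_\xi$ with $\xi \in Q^{\vee,+}$, the factorization $s_i t_\xi = t_{s_i\xi} s_i$ together with the sign of $\pair{\alpha_i}{\xi}$ places $s_i t_\xi$ on the side where $\SD_i$ acts as the identity, which amounts to $\ST_i[\CO_{\bQG(t_\xi)}] = 0$. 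Tensoring with $[\CO_{\bQG}(\lambda)]$ does not disturb this because of the compatibility recalled just after the definition of $\st_i$ in the paper, so $\ST_i[\CO_{\bQG(t_\xi)}(\lambda)] = 0$ as well.

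Putting the two steps together yields the claimed identity. The main obstacle is the second step: verifying the vanishing $\ST_i[\CO_{\bQG(t_\xi)}(\lambda)] = 0$ genuinely requires the combinatorial/geometric description of the $\BH_0$-action on the semi-infinite Schubert basis, which rests on nontrivial properties of the semi-infinite Bruhat order and is precisely where the hypothesis $\xi \in Q^{\vee,+}$ (rather than an arbitrary $\xi \in Q^\vee$) is used. Since this framework has already been set up in \cite{KNS} and the current lemma is exactly \cite[Lemma~A.2]{MNS}, I would invoke those references for this step rather than reproduce the full semi-infinite geometry here.
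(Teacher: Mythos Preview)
The paper does not supply its own proof of this lemma; it is quoted verbatim from \cite[Lemma~A.2]{MNS} (with supporting references to \cite{Orr} and \cite{KNOS}). Your two-step strategy---Bernstein-type commutation in $\BH_0$ followed by the vanishing $\ST_i[\CO_{\bQG(t_\xi)}(\lambda)]=0$---is exactly the argument one finds in those sources, so the approach is correct and matches the cited proof.

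Two small inaccuracies in your execution are worth flagging. First, when you justify that tensoring by $[\CO_{\bQG}(\lambda)]$ does not disturb the vanishing, you invoke ``the compatibility recalled just after the definition of $\st_i$ in the paper.'' That displayed identity concerns the \emph{translation} operator $\st_i$ (lower-case), not the nil-DAHA generator $\ST_i$; these are entirely different operators. The fact you actually need---that $\SD_i$ (equivalently $\ST_i$) for finite $i\in I$ commutes with the line bundle twist $-\otimes[\CO_{\bQG}(\lambda)]$---is true and is part of the description of the $\BH_0$-action in \cite[\S2.6]{Orr} and \cite[\S3.1.2]{KNOS}, but it is not the $\st_i$-compatibility displayed in the paper. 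Second, your reason for $\SD_i[\CO_{\bQG(t_\xi)}]=[\CO_{\bQG(t_\xi)}]$ is slightly off: writing $s_i t_\xi$ in the form $w t_\eta$ gives $w=s_i$, $\eta=\xi$, so $\ell^{\frac{\infty}{2}}(s_i t_\xi)=\ell^{\frac{\infty}{2}}(t_\xi)+1$ regardless of the sign of $\langle\alpha_i,\xi\rangle$. The hypothesis $\xi\in Q^{\vee,+}$ is used only so that $t_\xi\in W_{\af}^{\ge 0}$ and the class $[\CO_{\bQG(t_\xi)}]$ is defined, not to determine which branch of the $\SD_i$-action applies.
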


\section{\texorpdfstring{Key relations in $\KTQG$}{Key relations in KT(QG)}} \label{sec:key}
In order to prove Theorem~\ref{thm:main}, we need to obtain sufficiently many relations in $\KTQG$. 
First, by making use of the inverse Chevalley formula (see \eqref{eq:IC_1} and \eqref{eq:IC_2}), we obtain a ``base'' relation. 
Then, by applying Demazure operators to the ``base'' relation, we obtain the other relations. 
These relations can be thought of as recurrence relations for the elements $\FF_{l}$, $0 \le l \le n$, of $K_{T}(\bQG)$ (defined in Definition~\ref{def:FFl}) with coefficients in $R(T)$, which uniquely determine the elements $\FF_{l}$, $0 \le l \le n$. Hence, by solving these recurrence relations, 
we obtain an explicit description (see Theorem~\ref{thm:relation_semi-infinite}) of the elements $\FF_{l}$, $0 \le l \le n$, as explicit elements of $R(T)$. 
Although this strategy for obtaining the desired description of $\FF_{l}$, $0 \le l \le n$, is almost the same as that in \cite{MNS} in type $A$ , 
we need to overcome some technical difficulties peculiar to the case of type $C$; in particular, we need Proposition~\ref{prop:symmetry}. 

In this section, we assume that $G = \Sp$, the symplectic group of rank $n$. 

\subsection{``Base'' relation}
From equations \eqref{eq:IC_1} and \eqref{eq:IC_2}, which are special cases of the inverse Chevalley formula, we will obtain a ``base'' relation in $\KTQG$. 
First, by multiplying both sides of \eqref{eq:IC_1} by $[\CO_{\bQG}(-\ve_{k+1})]$ and specializing $q$ to $1$, we obtain the following. 
\begin{lem}
Let $1 \le k \le n-1$. The following equality holds in $\KTQG$: 
\begin{equation} \label{eq:rec_1}
\begin{split}
[\CO_{\bQG(s_{1}s_{2} \cdots s_{k+1})}] &= -\e^{\ve_{1}} [\CO_{\bQG(s_{1}s_{2} \cdots s_{k})}(-\ve_{k+1})] + [\CO_{\bQG(s_{1}s_{2} \cdots s_{k})}] \\ 
& \quad + \sum_{j = 1}^{k} \left[\CO_{\bQG\left(s_{1}s_{2} \cdots s_{j-1} t_{\alpha_{j}^{\vee} + \alpha_{j+1}^{\vee} + \cdots + \alpha_{k}^{\vee}}\right)}(\ve_{j} - \ve_{k+1}) \right] \\ 
& \quad - \sum_{j = 1}^{k} \left[\CO_{\bQG\left(s_{1}s_{2} \cdots s_{j} t_{\alpha_{j}^{\vee} + \alpha_{j+1}^{\vee} + \cdots + \alpha_{k}^{\vee}}\right)}(\ve_{j} - \ve_{k+1}) \right]. 
\end{split}
\end{equation}
\end{lem}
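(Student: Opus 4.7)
The plan is to carry out the operation announced immediately before the lemma statement: tensor both sides of the inverse Chevalley identity \eqref{eq:IC_1} with the line-bundle class $[\CO_{\bQG}(-\ve_{k+1})]$, specialize the loop-rotation parameter $q$ to $1$, and rearrange so that $[\CO_{\bQG(s_1 s_2 \cdots s_{k+1})}]$ stands alone on the left-hand side.

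First I would recall from Section~\ref{sec:semi-infinite} that on $\KTCQG$ the line-bundle twists are additive, i.e.\ $[\CO_{\bQG(x)}(\lambda)] \otimes [\CO_{\bQG}(\mu)] = [\CO_{\bQG(x)}(\lambda+\mu)]$ for $x \in W_{\af}^{\geq 0}$ and $\lambda, \mu \in P$, and that the operation $\FZ \mapsto \FZ \otimes [\CO_{\bQG}(\mu)]$ is $R(T)$-linear. Multiplying \eqref{eq:IC_1} by $[\CO_{\bQG}(-\ve_{k+1})]$ therefore shifts every line-bundle twist by $-\ve_{k+1}$: on the left-hand side the scalar factor $\e^{\ve_1}$ passes through and we obtain $\e^{\ve_1}[\CO_{\bQG(s_1 \cdots s_k)}(-\ve_{k+1})]$; the first two terms on the right-hand side become
\[
[\CO_{\bQG(s_1 \cdots s_k)}(0)] - [\CO_{\bQG(s_1 \cdots s_{k+1})}(0)] = [\CO_{\bQG(s_1 \cdots s_k)}] - [\CO_{\bQG(s_1 \cdots s_{k+1})}];
\]
and each summand in the two $q$-sums acquires the twist $\ve_j - \ve_{k+1}$ in place of $\ve_j$.

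Next I would specialize $q \mapsto 1$ to pass from $\KTCQG$ to $\KTQG$, which simply erases the coefficient $q$ in front of the two summations. The resulting identity contains $-[\CO_{\bQG(s_1 \cdots s_{k+1})}]$ on the right-hand side; transposing this term to the left and bringing everything else across the equality yields exactly \eqref{eq:rec_1}.

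Since \eqref{eq:IC_1} is a finite linear combination, there is no subtlety concerning the infinite-sum convergence built into the definitions of $\KTCQG$ and $\KTQG$ when tensoring by a line bundle or specializing $q$, so I do not expect any genuine obstacle. In summary the lemma is a direct rewriting of \eqref{eq:IC_1}, requiring only the additive/$R(T)$-bilinear behavior of the line-bundle twist recalled above.
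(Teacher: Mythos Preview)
Your proposal is correct and follows exactly the approach the paper takes: the paper simply states that the lemma is obtained by multiplying both sides of \eqref{eq:IC_1} by $[\CO_{\bQG}(-\ve_{k+1})]$ and specializing $q$ to $1$, and you have spelled out precisely those steps.
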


Also, by multiplying both sides of \eqref{eq:IC_2} by $[\CO_{\bQG}(\ve_{k})]$ and specializing $q$ to $1$, we obtain the following. 
\begin{lem}
Let $1 \le k \le n$. The following equality holds in $\KTQG$: 
\begin{equation} \label{eq:rec_2}
\begin{split}
& [\CO_{\bQG(s_{1} \cdots s_{n-1}s_{n}s_{n-1} \cdots s_{k-1})}] \\ 
&= -\e^{\ve_{1}}[\CO_{\bQG(s_{1} \cdots s_{n-1}s_{n}s_{n-1} \cdots s_{k})}(\ve_{k})] + [\CO_{\bQG(s_{1} \cdots s_{n-1}s_{n}s_{n-1} \cdots s_{k})}] \\ 
& \quad + \sum_{j = k+1}^{n} \left[\CO_{\bQG\left(s_{1} \cdots s_{n-1}s_{n}s_{n-1} \cdots s_{j} t_{\alpha_{k}^{\vee} + \alpha_{k+1}^{\vee} + \cdots + \alpha_{j-1}^{\vee}}\right)}(\ve_{k} - \ve_{j})\right] \\ 
& \quad - \sum_{j = k+1}^{n} \left[\CO_{\bQG\left(s_{1} \cdots s_{n-1}s_{n}s_{n-1} \cdots s_{j-1} t_{\alpha_{k}^{\vee} + \alpha_{k+1}^{\vee} + \cdots + \alpha_{j-1}^{\vee}}\right)}(\ve_{k} - \ve_{j})\right] \\ 
& \quad + \sum_{j = 1}^{k} \left[\CO_{\bQG\left(s_{1}s_{2} \cdots s_{j-1} t_{\alpha_{j}^{\vee} + \alpha_{j+1}^{\vee} + \cdots + \alpha_{n}^{\vee}}\right)}(\ve_{j}+\ve_{k}) \right]\\ 
& \quad - \sum_{j = 1}^{k} \left[\CO_{\bQG\left(s_{1}s_{2} \cdots s_{j} t_{\alpha_{j}^{\vee} + \alpha_{j+1}^{\vee} + \cdots + \alpha_{n}^{\vee}}\right)}(\ve_{j}+\ve_{k}) \right]; 
\end{split}
\end{equation}
when $k = 1$, the left-hand side of the above formula is understood to be $0$. 
\end{lem}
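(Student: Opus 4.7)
The plan is to obtain \eqref{eq:rec_2} as an immediate corollary of the inverse Chevalley formula \eqref{eq:IC_2} in Theorem~\ref{thm:IC_2} by two mechanical operations: tensoring with a line bundle class, and solving for one distinguished term. No genuine new input beyond \eqref{eq:IC_2} and the basic multiplicative structure on $\KTCQG$ recalled in \S\ref{sec:semi-infinite} is required.

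First, I will tensor both sides of \eqref{eq:IC_2} with $[\CO_{\bQG}(\ve_k)]$ in $\KTCQG$. Using the identities $[\CO_{\bQG(x)}] \otimes [\CO_{\bQG}(\lambda)] = [\CO_{\bQG(x)}(\lambda)]$ and $\CO_{\bQG}(\lambda) \otimes \CO_{\bQG}(\mu) = \CO_{\bQG}(\lambda+\mu)$, the first two summands on the right-hand side of \eqref{eq:IC_2} become $[\CO_{\bQG(s_1 \cdots s_{n-1} s_n s_{n-1} \cdots s_k)}]$ and $-[\CO_{\bQG(s_1 \cdots s_{n-1} s_n s_{n-1} \cdots s_{k-1})}]$ respectively; each $(-\ve_j)$-twist in the $j$-sums over $j = k+1, \ldots, n$ becomes a $(\ve_k - \ve_j)$-twist; each $\ve_j$-twist in the $j$-sums over $j = 1, \ldots, k$ becomes a $(\ve_j + \ve_k)$-twist; and the left-hand side becomes $\e^{\ve_1}[\CO_{\bQG(s_1 \cdots s_{n-1} s_n s_{n-1} \cdots s_k)}(\ve_k)]$.

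Second, I will rearrange by moving the term $-[\CO_{\bQG(s_1 \cdots s_{n-1} s_n s_{n-1} \cdots s_{k-1})}]$ from the right-hand side to the left-hand side (with sign change), and moving $\e^{\ve_1}[\CO_{\bQG(s_1 \cdots s_{n-1} s_n s_{n-1} \cdots s_k)}(\ve_k)]$ from the left-hand side to the right-hand side; the resulting equality, after the specialization $q \mapsto 1$ that passes from $\KTCQG$ to $\KTQG$, is exactly \eqref{eq:rec_2}. The boundary case $k=1$ matches automatically: by the convention in Theorem~\ref{thm:IC_2}, the term $[\CO_{\bQG(s_1 \cdots s_{n-1} s_n s_{n-1} \cdots s_{k-1})}(-\ve_k)]$ is $0$ when $k=1$, and under the above manipulation this translates to the stipulation that the left-hand side of \eqref{eq:rec_2} is $0$ when $k=1$.

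There is no real obstacle here; the only point requiring care is sign bookkeeping when transposing the two highlighted terms across the equality, and the confirmation that the specialization $q \mapsto 1$ is compatible term-by-term (which is clear, since every coefficient appearing in \eqref{eq:IC_2} after the tensor product belongs to $\BZ[q, q^{-1}]$ and the resulting sum is a finite linear combination of classes of the form $[\CO_{\bQG(y)}(\mu)]$, on which the specialization is well-defined).
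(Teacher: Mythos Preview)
Your proposal is correct and follows essentially the same approach as the paper, which simply states that the lemma is obtained by multiplying both sides of \eqref{eq:IC_2} by $[\CO_{\bQG}(\ve_{k})]$ and specializing $q$ to $1$. Your write-up just makes the bookkeeping (tensoring, rearranging the distinguished term, and handling the $k=1$ convention) more explicit than the paper's one-line justification.
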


For $1 \le k \le n$, we set 
\begin{align}
\FP_{k} &:= [\CO_{\bQG(s_{1}s_{2} \cdots s_{k})}], \\ 
\FQ_{k} &:= [\CO_{\bQG(s_{1} \cdots s_{n-1}s_{n}s_{n-1} \cdots s_{k})}]. 
\end{align}
Also, we set $\FP_{0} := 1$ and $\FQ_{0} := 0$. 
Then, \eqref{eq:rec_1} and \eqref{eq:rec_2} can be rewritten as: 
\begin{equation} \label{eq:rec_3}
\begin{split}
\FP_{k+1} &= -\e^{\ve_{1}}\FP_{k} \otimes [\CO_{\bQG}(-\ve_{k+1})] + \FP_{k} \\ 
& \quad + \sum_{j = 1}^{k} \st_{j}\st_{j+1} \cdots \st_{k} \FP_{j-1} \otimes [\CO_{\bQG}(\ve_{j} - \ve_{k+1})] \\ 
& \quad - \sum_{j = 1}^{k} \st_{j}\st_{j+1} \cdots \st_{k} \FP_{j} \otimes [\CO_{\bQG}(\ve_{j} - \ve_{k+1})], 
\end{split}
\end{equation}
\begin{equation} \label{eq:rec_4}
\begin{split}
\FQ_{k-1} &= -\e^{\ve_{1}}\FQ_{k} \otimes [\CO_{\bQG}(\ve_{k})] + \FQ_{k} \\ 
& \quad + \sum_{j = k+1}^{n} \st_{k}\st_{k+1} \cdots \st_{j-1} \FQ_{j} \otimes [\CO_{\bQG}(\ve_{k} - \ve_{j})] \\ 
& \quad - \sum_{j = k+1}^{n} \st_{k}\st_{k+1} \cdots \st_{j-1} \FQ_{j-1} \otimes [\CO_{\bQG}(\ve_{k} - \ve_{j})] \\ 
& \quad + \sum_{j = 1}^{k} \st_{j}\st_{j+1} \cdots \st_{n} \FP_{j-1} \otimes [\CO_{\bQG}(\ve_{j}+\ve_{k})] \\ 
& \quad - \sum_{j = 1}^{k} \st_{j}\st_{j+1} \cdots \st_{n} \FP_{j} \otimes [\CO_{\bQG}(\ve_{j}+\ve_{k})]. 
\end{split}
\end{equation}
Equations \eqref{eq:rec_3} and \eqref{eq:rec_4} can be thought of as recurrence relations for $\FP_{0}$, $\FP_{1}$, $\FP_{2}$, $\ldots$, $\FP_{n}$, $\FQ_{n}$, $\FQ_{n-1}$, $\ldots$, $\FQ_{1}$, $\FQ_{0}$. 
By solving these recurrence relations, we can write these elements as explicit linear combinations of line bundle classes with coefficients in $\End_{\BC}(K_{T}(\bQG))$. 

\begin{defn}
Let $I \subset [1, \overline{1}]$. 
\begin{enu}
\item For $1 \le j \le n$, we define $\psi_{I}(j) \in \End_{\BC}(K_{T}(\bQG))$ by 
\begin{equation}
\psi_{I}(j) := \begin{cases}
1-\st_{j} & \text{if $j \notin I$ and $j+1 \in I$}, \\ 
1 & \text{otherwise}, 
\end{cases}
\end{equation}
where we understand that $n+1 := \overline{n}$. 

\item For $2 \le j \le n$, we define $\psi_{I}(\overline{j}) \in \End_{\BC}(K_{T}(\bQG))$ by 
\begin{equation}
\psi_{I}(\overline{j}) := \begin{cases} 
1 - \st_{j-1} + \st_{j-1} \st_{j} \cdots \st_{n} & \text{if $I$ is of the form $\{ \cdots < j-1 < \overline{j-1} < \cdots \}$,} \\ 
1 - \st_{j-1} & \text{\parbox{12em}{if $I$ is not of the above form \\ and $\overline{j} \notin I$, $\overline{j-1} \in I$,}} \\ 
1 & \text{otherwise}. \end{cases}
\end{equation}

\item Define $\psi_{I}(\overline{1}) := 1 \in \End_{\BC}(K_{T}(\bQG))$. 
\end{enu}
\end{defn}

\begin{exm}
Let $n = 4$ and $I = \{2, 3, \overline{3}, \overline{1}\}$. Then the elements $\psi_{I}(j)$ for $1 \le j \le \overline{1}$ are as in the following Table~\ref{tab:list_example}. 
\begin{table}[h]
\centering
\caption{The list of $\psi_{I}(j)$ for $1 \le j \le \overline{1}$}
\label{tab:list_example}
\begin{tabular}{|c|cccccccc|} \hline
$j$ & $1$ & $2$ & $3$ & $4$ & $\rule{0em}{2.5ex}\overline{4}$ & $\overline{3}$ & $\overline{2}$ & $\overline{1}$ \\ \hline 
$\psi_{I}(j)$ & $1-\st_{1}$ & $1$ & $1$ & $1$ & $1 - \st_{3} + \st_{3}\st_{4}$ & $1$ & $1 - \st_{1}$ & $1$ \\ \hline
\end{tabular}
\end{table}
\end{exm}

For $J \subset [1, \overline{1}]$, we set 
\begin{equation}
\ve_{J} := \sum_{j \in I} \ve_{j}. 
\end{equation}

\begin{defn} \label{def:FFl}
\begin{enu}
\item For $1 \le k \le n$ and $0 \le l \le k$, we define $\FF_{l}^{k} \in \KTQG$ by 
\begin{equation}
\FF_{l}^{k} := \sum_{\substack{I \subset [1, k] \\ |I| = l}} \left( \prod_{1 \le j \le \overline{1}} \psi_{I}(j) \right) [\CO_{\bQG}(-\ve_{I})]. 
\end{equation}

\item For $1 \le k \le n$ and $0 \le l \le 2n-k$, we define $\FF_{l}^{\overline{k}} \in \KTQG$ by 
\begin{equation}
\FF_{l}^{\overline{k}} := \sum_{\substack{I \subset [1, \overline{k+1}] \\ |I| = l}} \left( \prod_{1 \le j \le \overline{1}} \psi_{I}(j) \right) [\CO_{\bQG}(-\ve_{I})], 
\end{equation}
where we understand that $\overline{n+1} := n$. 

\item For $0 \le l \le 2n$, we define $\FF_{l} \in \KTQG$ by 
\begin{equation}
\FF_{l} := \sum_{\substack{I \subset [1, \overline{1}] \\ |I| = l}} \left( \prod_{1 \le j \le \overline{1}} \psi_{I}(j) \right) [\CO_{\bQG}(-\ve_{I})]. 
\end{equation}
\end{enu}
\end{defn}

\begin{thm} \label{thm:solution_recursion}
\begin{enu}
\item For $1 \le k \le n$, we have 
\begin{equation}
\FP_{k} = \sum_{l = 0}^{k} (-1)^{l} \e^{l\ve_{1}} \FF_{l}^{k}. 
\end{equation}

\item For $1 \le k \le n$, we have 
\begin{equation} \label{eq:solve_2}
\FQ_{k} = \sum_{l = 0}^{2n-k} (-1)^{l} \e^{l\ve_{1}} \FF_{l}^{\overline{k}}. 
\end{equation}

\item We have 
\begin{equation} \label{eq:solve_3}
\sum_{l = 0}^{2n} (-1)^{l} \e^{l\ve_{1}} \FF_{l} = 0. 
\end{equation}
\end{enu}
\end{thm}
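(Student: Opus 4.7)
The plan is to treat assertions (1)--(3) as consequences of the recurrences \eqref{eq:rec_3} and \eqref{eq:rec_4}, which together with the boundary values $\FP_0 = 1$ and $\FQ_0 = 0$ determine the family $(\FP_k, \FQ_k)$ uniquely. The task is then to verify that the proposed closed-form expressions satisfy those recurrences; uniqueness forces the desired equalities.

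For (1), I would proceed by induction on $k$. The base case $k = 1$ reduces to the degenerate Chevalley-type identity $\FP_1 = 1 - \e^{\ve_1}[\CO_{\bQG}(-\ve_1)]$, since for $I \subset [1, 1]$ every factor $\psi_I(j)$ is trivially $1$. For the inductive step, I would substitute $\FP_0, \ldots, \FP_k$ from the (assumed) formula into \eqref{eq:rec_3}, expand, and collect terms by weight $-\ve_I$ for $I \subset [1, k+1]$. The combinatorial check is that the accumulated Demazure-operator coefficient of each $[\CO_{\bQG}(-\ve_I)]$ equals the prescribed product $\prod_{1 \le j \le \overline{1}} \psi_I(j)$. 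Splitting by whether $k+1 \in I$ isolates the two sources of new $1 - \st_j$ factors: the telescoping difference $\sum_{j} \st_j \cdots \st_k (\FP_{j-1} - \FP_j)$ and the pairing of $\e^{\ve_1}\FP_k \otimes [\CO_{\bQG}(-\ve_{k+1})]$ against the remaining terms.

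For (2), I would do downward induction on $k$. The base case $k = n$ reduces to $\FQ_n = \FP_n$ (since the second block $s_{n-1} \cdots s_n$ is empty, and the convention $\overline{n+1} = n$ identifies $\FF_l^{\overline{n}}$ with $\FF_l^n$), so (2) at $k = n$ follows from (1). The inductive step substitutes the known $\FQ_k, \ldots, \FQ_n$ and $\FP_0, \ldots, \FP_k$ into \eqref{eq:rec_4} and matches the coefficient of each $[\CO_{\bQG}(-\ve_I)]$ for $I \subset [1, \overline{k}]$. The new feature in type $C$ is the three-term factor $\psi_I(\overline{j}) = 1 - \st_{j-1} + \st_{j-1}\st_j \cdots \st_n$ arising when $I$ has the symmetric shape $\{\cdots < j-1 < \overline{j-1} < \cdots\}$: the $1 - \st_{j-1}$ piece comes from the $\FQ$-contribution (the first four sums in \eqref{eq:rec_4}), exactly as in type $A$, while the additional $\st_{j-1}\st_j \cdots \st_n$ piece emerges from the two $\FP$-tail sums weighted by $\ve_j + \ve_k$. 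Reconciling these two sources across all admissible $I$ is what invokes Proposition~\ref{prop:symmetry}.

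Finally, (3) is obtained by extending (2) one step further: the recurrence \eqref{eq:rec_4} at $k = 1$ determines $\FQ_0$ in terms of $\FQ_1, \ldots, \FQ_n$ and $\FP_0, \FP_1$, and the same matching argument on the right-hand side (with the conventions $\overline{k+1} = \overline{1}$ and $2n - k = 2n$ at $k = 0$) yields $\sum_{l = 0}^{2n} (-1)^l \e^{l\ve_1} \FF_l$. Since $\FQ_0 = 0$ by definition, this sum vanishes. The principal obstacle is the combinatorial reconciliation in (2): organizing the Demazure strings $\st_{j-1}\st_j \cdots \st_n$ contributed by the $\FP$-tail of \eqref{eq:rec_4} so that they land in exactly the right summand of $\psi_I(\overline{j})$, and verifying consistency across all $I$ (including those not of the symmetric form). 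Proposition~\ref{prop:symmetry} is the technical input that makes this bookkeeping tractable, and is the main substantive ingredient peculiar to type $C$.
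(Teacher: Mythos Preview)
Your overall strategy---induction on $k$ for (1), downward induction on $k$ for (2) with base case $k=n$ via $\FQ_n=\FP_n$, and (3) as the $k=0$ extension of (2)---matches the paper's approach exactly. However, your identification of the technical content of the inductive step for (2) is off in two respects.

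First, Proposition~\ref{prop:symmetry} is \emph{not} used in the proof of Theorem~\ref{thm:solution_recursion}; it is stated afterward and is invoked only to pass from \eqref{eq:solve_3} to the ``base'' relation \eqref{eq:initial}. The inductive step is a self-contained telescoping computation: one substitutes the inductive hypotheses into \eqref{eq:rec_4}, collects the coefficient of $[\CO_{\bQG}(-\ve_I)]$ for each $I\subset[1,\overline{k}]$ with $\overline{k}\in I$, and shows that the resulting operator
\[
\varphi \;=\; 1 - \sum_{i=k+1}^{n}\st_k\cdots\st_{i-1}\,\psi_{(I\setminus\{\overline{k}\})\sqcup\{\overline{i}\}}(\overline{i+1}) - \st_{M+1}\cdots\st_n - \sum_{i=M+2}^{k}\st_i\cdots\st_n\,\psi_{(I\setminus\{\overline{k}\})\sqcup\{i\}}(i-1)
\]
equals $\psi_I(\overline{k+1})$, where $M:=\max(I\setminus\{\overline{k}\})$. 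This is done by a direct case split on whether $M<k$, $M=k$, $M>k$, or $M\ge n$; each case is a one-line telescoping identity. No symmetry of the $\FF_l$ is needed.

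Second, your attribution of the extra term $\st_{j-1}\st_j\cdots\st_n$ in $\psi_I(\overline{j})$ to the ``$\FP$-tail sums'' is backwards: it arises precisely in the case $M=k$, where the $\FP$-tail sums are empty and the term is the telescoping residue of the $\FQ$-contribution $\sum_{i=k+1}^n\st_k\cdots\st_{i-1}(1-\st_i)$. The $\FP$-tail (the last two sums in \eqref{eq:rec_4}) contributes only when $M<k$, and there it cancels against part of the $\FQ$-sum to leave $1-\st_k$. So the ``principal obstacle'' you describe is resolved by the $M$-case analysis, not by Proposition~\ref{prop:symmetry}.
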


\begin{proof}
Part (1) can be proved by the same argument as for \cite[Proposition~4.7]{MNS}. 
If we set $\FF_{l}^{\overline{0}} := \FF_{l}$ for $0 \le l \le 2n$, then part (3) can be regarded as the special case $k = 0$ of \eqref{eq:solve_2}. 
Hence, in order to prove parts (2) and (3), it suffices to prove \eqref{eq:solve_2} for $0 \le k \le n$.
We will prove \eqref{eq:solve_2} by downward induction on $k = n, n-1, \ldots, 1, 0$. 
Since $\FP_{n} = \FQ_{n}$, the case $k = n$ is already proved by part (1). 
Now assume that \eqref{eq:solve_2} holds for all $k$, with $0 < k \le n$. 
By \eqref{eq:rec_4}, we see that 
\begin{align}
\begin{split}
\FQ_{k-1} &= -\e^{\ve_{1}} \sum_{l = 0}^{2n-k} (-1)^{l} \e^{l\ve_{1}} \FF_{l}^{\overline{k}} \otimes [\CO_{\bQG}(\ve_{k})] + \sum_{l = 0}^{2n-k} (-1)^{l} \e^{l\ve_{1}} \FF_{l}^{\overline{k}} \\ 
& \quad + \sum_{i = k+1}^{n} \st_{k}\st_{k+1} \cdots \st_{i-1} \sum_{l = 0}^{2n-i} (-1)^{l} \e^{l\ve_{1}} \FF_{l}^{\overline{i}} \otimes [\CO_{\bQG}(\ve_{k} - \ve_{i})] \\ 
& \quad - \sum_{i = k+1}^{n} \st_{k}\st_{k+1} \cdots \st_{i-1} \sum_{l = 0}^{2n-i+1} (-1)^{l} \e^{l\ve_{1}} \FF_{l}^{\overline{i-1}} \otimes [\CO_{\bQG}(\ve_{k} - \ve_{i})] \\ 
& \quad + \sum_{i = 1}^{k} \st_{i}\st_{i+1} \cdots \st_{n} \sum_{l = 0}^{i-1} (-1)^{l} \e^{l\ve_{1}} \FF_{l}^{i-1} \otimes [\CO_{\bQG}(\ve_{i}+\ve_{k})] \\ 
& \quad - \sum_{i = 1}^{k} \st_{i}\st_{i+1} \cdots \st_{n} \sum_{l = 0}^{i} (-1)^{l} \e^{l\ve_{1}} \FF_{l}^{i} \otimes [\CO_{\bQG}(\ve_{i}+\ve_{k})] 
\end{split} \\ 
\begin{split}
&= \sum_{l = 0}^{2n-k} (-1)^{l+1} \e^{(l+1)\ve_{1}} \sum_{\substack{I \subset [1, \overline{k+1}] \\ |I| = l}} \left( \prod_{1 \le j \le \overline{1}} \psi_{I}(j) \right) [\CO_{\bQG}(-\ve_{I} + \ve_{k})] \\ 
& \quad + \sum_{l = 0}^{2n-k} (-1)^{l} \e^{l\ve_{1}} \sum_{\substack{I \subset [1, \overline{k+1}] \\ |I| = l}} \left( \prod_{1 \le j \le \overline{1}} \psi_{I}(j) \right) [\CO_{\bQG}(-\ve_{I})] \\ 
& \quad + \sum_{i = k+1}^{n} \st_{k}\st_{k+1} \cdots \st_{i-1} \sum_{l = 0}^{2n-i} (-1)^{l} \e^{l\ve_{1}} \sum_{\substack{I \subset [1, \overline{i+1}] \\ |I| = l}} \left( \prod_{1 \le j \le \overline{1}} \psi_{I}(j) \right) [\CO_{\bQG}(-\ve_{I} + \ve_{k} - \ve_{i})] \\ 
& \quad - \sum_{i = k+1}^{n} \st_{k}\st_{k+1} \cdots \st_{i-1} \sum_{l = 0}^{2n-i+1} (-1)^{l} \e^{l\ve_{1}} \sum_{\substack{I \subset [1, \overline{i}] \\ |I| = l}} \left( \prod_{1 \le j \le \overline{1}} \psi_{I}(j) \right) [\CO_{\bQG}(-\ve_{I} + \ve_{k} - \ve_{i})] \\ 
& \quad + \sum_{i = 1}^{k} \st_{i}\st_{i+1} \cdots \st_{n} \sum_{l = 0}^{i-1} (-1)^{l} \e^{l\ve_{1}} \sum_{\substack{I \subset [1, i-1] \\ |I| = l}} \left( \prod_{1 \le j \le \overline{1}} \psi_{I}(j) \right) [\CO_{\bQG}(-\ve_{I}+\ve_{i}+\ve_{k})] \\ 
& \quad - \sum_{i = 1}^{k} \st_{i}\st_{i+1} \cdots \st_{n} \sum_{l = 0}^{i} (-1)^{l} \e^{l\ve_{1}} \sum_{\substack{I \subset [1, i] \\ |I| = l}} \left( \prod_{1 \le j \le \overline{1}} \psi_{I}(j) \right) [\CO_{\bQG}(-\ve_{I}+\ve_{i}+\ve_{k})] 
\end{split} \\ 
\begin{split}
&= \sum_{I \subset [1, \overline{k+1}]}  (-1)^{|I|+1} \e^{(|I|+1)\ve_{1}} \left( \prod_{1 \le j \le \overline{1}} \psi_{I}(j) \right) [\CO_{\bQG}(-\ve_{I}+\ve_{k})] \\ 
& \quad + \sum_{I \subset [1, \overline{k+1}]}  (-1)^{|I|} \e^{|I|\ve_{1}} \left( \prod_{1 \le j \le \overline{1}} \psi_{I}(j) \right) [\CO_{\bQG}(-\ve_{I})] \\ 
& \quad - \sum_{i = k+1}^{n} \st_{k}\st_{k+1} \cdots \st_{i-1} \sum_{\substack{I \subset [1, \overline{i}] \\ \overline{i} \in I}} (-1)^{|I|} \e^{|I|\ve_{1}} \left( \prod_{1 \le j \le \overline{1}} \psi_{I}(j) \right) [\CO_{\bQG}(-\ve_{I}+\ve_{k} - \ve_{i})] \\ 
& \quad - \sum_{i = 1}^{k} \st_{i}\st_{i+1} \cdots \st_{n} \sum_{\substack{I \subset [1, i] \\ i \in I}} (-1)^{|I|} \e^{|I|\ve_{1}} \left( \prod_{1 \le j \le \overline{1}} \psi_{I}(j) \right) [\CO_{\bQG}(-\ve_{I}+\ve_{i}+\ve_{k})]. 
\end{split} \label{eq:recursive}
\end{align}
Let us compute the term corresponding to each $I \subset [1, \overline{k}]$ on the RHS of \eqref{eq:recursive}. 

Let $I \subset [1, \overline{k}]$ be such that $\overline{k} \notin I$; we set $l := |I|$. 
Then we see that the second sum on the RHS of \eqref{eq:recursive} contains the term
\begin{align}
& (-1)^{l} \e^{l\ve_{1}} \left( \prod_{1 \le j \le \overline{1}} \psi_{I}(j) \right) [\CO_{\bQG}(-\ve_{I})] \\ 
&= (-1)^{l} \e^{l\ve_{1}} \left( \prod_{1 \le j \le \overline{1}} \psi_{I}(j) \right) [\CO_{\bQG}(-\ve_{I})], 
\end{align}
which is just the term in $\FQ_{k-1}$ corresponding to $I$. 

Next, let $I \in [1, \overline{k}]$ be such that $\overline{k} \in I$; 
we set $M := \max (I \setminus \{\overline{k}\}) \in [1, \overline{1}]$. 
Assume that $M < n$. 
By summing up the terms: 
\begin{itemize}
\item the term in the first sum on the RHS of \eqref{eq:recursive} corresponding to $I \setminus \{\overline{k}\}$, 
\item the terms in the third sum on the RHS of \eqref{eq:recursive} corresponding to $(I \setminus \{\overline{k}\}) \sqcup \{\overline{i}\}$ for $k+1 \le i \le n$ and $j \in I$, 
\item the terms in the fourth sum on the RHS of \eqref{eq:recursive} corresponding to $(I \setminus \{\overline{k}\}) \sqcup \{i\}$ for $M+1 \le i \le k$ (if $k \le M$, then ignore this sum), 
\end{itemize}
we obtain the following: 
\begin{align}
\begin{split}
& (-1)^{(|I|-1) + 1} \e^{((|I|-1)+1)\ve_{1}} \left( \prod_{1 \le j \le \overline{1}} \psi_{I \setminus \{\overline{k}\}}(j) \right) [\CO_{\bQG}(-\ve_{I \setminus \{\overline{k}\}} + \ve_{k})] \\ 
&- \sum_{i = k+1}^{n} \st_{k}\cdots \st_{i-1} (-1)^{|I|} \e^{|I|\ve_{1}} \left( \prod_{1 \le j \le \overline{1}} \psi_{(I \setminus \{\overline{k}\}) \sqcup \{\overline{i}\}}(j) \right) [\CO_{\bQG}(-\ve_{(I \setminus \{\overline{k}\}) \sqcup \{\overline{i}\}}+\ve_{k}-\ve_{i})] \\ 
&- \sum_{i = M+1}^{k} \st_{i} \cdots \st_{n} (-1)^{|I|} \e^{|I|\ve_{1}} \left( \prod_{1 \le j \le \overline{1}} \psi_{(I \setminus \{\overline{k}\}) \sqcup \{ i\}}(j) \right) [\CO_{\bQG}(-\ve_{(I \setminus \{\overline{k}\}) \sqcup \{ i \}}+\ve_{i}+\ve_{k})]
\end{split} \\ 
\begin{split}
&= (-1)^{|I|} \e^{|I|\ve_{1}} \left( \left( \prod_{1 \le j \le \overline{1}} \psi_{I \setminus \{\overline{k}\}}(j) \right) - \sum_{i = k+1}^{n} \st_{k} \cdots \st_{i-1} \left( \prod_{1 \le j \le \overline{1}} \psi_{(I \setminus \{\overline{k}\}) \sqcup \{\overline{i}\}}(j) \right) \right. \\ 
& \hspace{40mm} \left. - \sum_{i = M+1}^{k} \st_{i} \cdots \st_{n} \left( \prod_{1 \le j \le \overline{1}} \psi_{(I \setminus \{\overline{k}\}) \sqcup \{ i\}}(j) \right) \right)  [\CO_{\bQG}(-\ve_{I})]. 
\end{split} \\ 
\begin{split}
&= (-1)^{|I|} \e^{|I|\ve_{1}} \left( \prod_{1 \le j \le M} \psi_{I \setminus \{\overline{k}\}}(j) \right) \\ 
& \qquad \times \left( 1 - \sum_{i = k+1}^{n} \st_{k} \cdots \st_{i-1} \psi_{(I \setminus \{\overline{k}\}) \sqcup \{\overline{i}\}} (\overline{i+1}) \right. \\ 
& \hspace{30mm} \left. {}-{} \st_{M+1} \cdots \st_{n} - \sum_{i = M+2}^{k} \st_{i} \cdots \st_{n} \psi_{(I \setminus \{\overline{k}\}) \sqcup \{i\}}(i-1) \right) [\CO_{\bQG}(-\ve_{I})]. 
\end{split} \label{eq:recursion_middle}
\end{align}
Here we set 
\begin{equation} \label{eq:recursion_sub}
\varphi := 1 - \sum_{i = k+1}^{n} \st_{k} \cdots \st_{i-1} \psi_{(I \setminus \{\overline{k}\}) \sqcup \{\overline{i}\}} (\overline{i+1}) - \st_{M+1} \cdots \st_{n} - \sum_{i = M+2}^{k} \st_{i} \cdots \st_{n} \psi_{(I \setminus \{\overline{k}\}) \sqcup \{i\}}(i-1). 
\end{equation} 
If $M < k$, then 
\begin{align}
\begin{split}
\varphi &= 1 - \sum_{i = k+1}^{n} \st_{k} \cdots \st_{i-1} \psi_{(I \setminus \{\overline{k}\}) \sqcup \{\overline{i}\}} (\overline{i+1}) \\
& \qquad \quad - \st_{M+1} \cdots \st_{n} - \sum_{i = M+2}^{k} \st_{i} \cdots \st_{n} \psi_{(I \setminus \{\overline{k}\}) \sqcup \{i\}}(i-1)
\end{split} \\ 
&= 1 - \sum_{i = k+1}^{n} \st_{k} \cdots \st_{i-1} (1 - \st_{i}) - \st_{M+1} \cdots \st_{n} - \sum_{i = M+2}^{k} \st_{i} \cdots \st_{n} (1 - \st_{i-1}) \\ 
&= 1-\st_{k} \\ 
&= \psi_{I}(\overline{k+1}). 
\end{align}
If $M = k$, then 
\begin{align}
\varphi &= 1 - \sum_{i = k+1}^{n} \st_{k} \cdots \st_{i-1} \underbrace{\psi_{(I \setminus \{\overline{k}\}) \sqcup \{\overline{i}\}}(\overline{i+1})}_{= \, 1 - \st_{j}} \\ 
&= 1 - \st_{k} + \st_{k} \cdots \st_{n} \\ 
&= \psi_{I}(\overline{k+1}). 
\end{align}
If $M > k$, then 
\begin{align}
\varphi &= 1 - \sum_{i = k+1}^{n} \st_{k} \cdots \st_{i-1} \underbrace{\psi_{(I \setminus \{\overline{k}\}) \sqcup \{\overline{i}\}}(\overline{i+1})}_{\mathclap{= \, 1 - \st_{j} + \st_{j} \cdots \st_{n} \ \text{(if $j = M$)}, \ 1 - \st_{j} \ \text{(if $j \not= M$)}}} \\ 
&= 1 - \sum_{i = k+1}^{M-1} (1-\st_{i}) - \st_{k} \cdots \st_{M-1} (1 - \st_{M} + \st_{M} \cdots \st_{n}) - \sum_{i = M+1}^{n} \st_{k} \cdots \st_{j-1} (1 - \st_{i}) \\ 
&= 1 - \st_{k} \\
&= \psi_{I}(\overline{k+1}). 
\end{align}
From these, we conclude that the RHS of \eqref{eq:recursion_middle} equals 
\begin{equation}
(-1)^{|I|} \e^{|I|\ve_{1}} \left( \prod_{1 \le j \le 1} \psi_{I}(j) \right) [\CO_{\bQG}(-\ve_{I})], 
\end{equation}
which is just the term in $\FQ_{k-1}$ corresponding to $I$. 

Assume that $M \ge n$. If $M > n$, then take $1 \le m \le n$ such that $M = \overline{m}$. If $M = n$, then set $m := \overline{n}$. By summing up the terms: 
\begin{itemize}
\item the term in the first sum on the RHS of \eqref{eq:recursive} corresponding to $I \setminus \{\overline{k}\}$, 
\item the terms in the third sum on the RHS of \eqref{eq:recursive} corresponding to $(I \setminus \{\overline{k}\}) \sqcup \{\overline{i}\}$ for $k+1 \le i \le m-1$ (if $m = \overline{n}$, then regard $\overline{n}-1$ as $n$), 
\end{itemize}
we obtain the following (if $m = \overline{n}$, then regard $\overline{n}-2$ as $n-1$): 
\begin{align}
\begin{split}
& (-1)^{(|I|-1)+1} \e^{((|I|-1)+1)\ve_{1}} \left( \prod_{1 \le j \le \overline{1}} \psi_{I \setminus \{\overline{k}\}}(j) \right) [\CO_{\bQG}(-\ve_{I \setminus \{\overline{k}\}}+\ve_{k})] \\ 
& - \sum_{i = k+1}^{m-1} \st_{k} \cdots \st_{i-1} (-1)^{|I|} \e^{|I|\ve_{1}} \left( \prod_{1 \le j \le \overline{1}} \psi_{(I \setminus \{\overline{k}\}) \sqcup \{\overline{i}\}}(j) \right) [\CO_{\bQG}(-\ve_{(I \setminus \{\overline{k}\}) \sqcup \{\overline{i}\}}+\ve_{k} - \ve_{i})]
\end{split} \\ 
&= (-1)^{|I|} \e^{|I|\ve_{1}} \left( \left( \prod_{1 \le j \le M} \psi_{I \setminus \{\overline{k}\}}(j) \right) \left(1 - \st_{k} \cdots \st_{m-2} \right. \right. \\ 
& \hspace{40mm} \left. \left. - \sum_{i = k+1}^{m-2} \st_{k} \cdots \st_{i-1} \underbrace{\psi_{(I \setminus \{\overline{k}\}) \sqcup \{\overline{i}\}}(\overline{i+1})}_{= \, 1 - \st_{i}} \right) \right) [\CO_{\bQG}(-\ve_{I})] \\ 
&= (-1)^{|I|} \e^{|I|\ve_{1}} \left( \prod_{1 \le j \le M} \psi_{I \setminus \{\overline{k}\}}(j) \right) (1 - \st_{k}) [\CO_{\bQG}(-\ve_{I})] \\ 
&= (-1)^{|I|} \e^{|I|\ve_{1}} \left( \prod_{1 \le j \le 1} \psi_{I \setminus \{\overline{k}\}}(j) \right) [\CO_{\bQG}(-\ve_{I})], 
\end{align}
which is just the term in $\FQ_{k-1}$ corresponding to $I$. 
This proves \eqref{eq:solve_2} for $k-1$. 
Thus, by downward induction on $k$, equation \eqref{eq:solve_2} is proved. 
This completes the proof of the theorem. 
\end{proof}

Also, we can show the following; 
we defer the proof of this proposition to Appendix~\ref{sec:proof_symmetry}.  
\begin{prop} \label{prop:symmetry}
For $0 \le k \le n$, we have $\FF_{k} = \FF_{2n-k}$. 
\end{prop}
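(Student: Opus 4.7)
The plan is to work term-by-term via the complementation involution $I \mapsto I^c := [1, \overline{1}] \setminus I$, which bijects size-$k$ subsets of $[1, \overline{1}]$ with size-$(2n-k)$ subsets. First observe that $\ve_{[1, \overline{1}]} = \sum_{j=1}^{n}(\ve_{j} + \ve_{\overline{j}}) = 0$, hence $\ve_{I^c} = -\ve_{I}$ and $[\CO_{\bQG}(-\ve_{I^c})] = [\CO_{\bQG}(\ve_{I})]$. Thus $\FF_{k} = \FF_{2n-k}$ reduces to the identity
\begin{equation*}
\sum_{|I| = k} \Bigl(\prod_{1 \le j \le \overline{1}} \psi_{I}(j)\Bigr) [\CO_{\bQG}(-\ve_{I})] = \sum_{|I| = k} \Bigl(\prod_{1 \le j \le \overline{1}} \psi_{I^c}(j)\Bigr) [\CO_{\bQG}(\ve_{I})].
\end{equation*}

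To attack this, I would expand both sides in the semi-infinite Schubert basis. Each factor $\psi_{I}(j)$ is the identity, a binomial $1 - \st_{i}$, or the trinomial $1 - \st_{j-1} + \st_{j-1}\st_{j} \cdots \st_{n}$, so the product $\prod_{j} \psi_{I}(j)$ is a signed sum of compositions of $\st_{i}$'s; applied to $[\CO_{\bQG}(\pm\ve_{I})]$ these produce semi-infinite Schubert classes twisted by line bundles $\CO_{\bQG}(\pm \ve_{I})$. The goal is then to exhibit a sign-preserving bijection between the monomial contributions of $I$ on the left and those of $I^c$ on the right. Complementation swaps the condition ``$j \notin I$ and $j+1 \in I$'' defining the $\st_{j}$-factor in $\psi_{I}(j)$ with ``$j \in I$ and $j+1 \notin I$'' in $\psi_{I^c}(j)$, so the binomial factors on the two sides are in natural correspondence; the flip of sign in $[\CO_{\bQG}(\mp\ve_{I})]$ should be absorbed by the translation shifts $t_{\alpha_{j}^{\vee}}$ generated by the $\st_{j}$'s.

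The main obstacle, in my view, is the ``special form'' case in the definition of $\psi_{I}(\overline{j})$: the trinomial $1 - \st_{j-1} + \st_{j-1}\st_{j} \cdots \st_{n}$ arises precisely when $I$ has no entries strictly between $j-1$ and $\overline{j-1}$, whereas under complementation $I^c$ contains the entire block $\{j, j+1, \ldots, n, \overline{n}, \ldots, \overline{j}\}$. Translating this trinomial across the asymmetric pairing and matching it with a product of binomials on the complementary side requires a delicate case analysis keyed to the ``break points'' of $I$ inside $[1, \overline{1}]$. I anticipate a telescoping argument, combined with induction on $n$ (or on the number of break points), of the same flavour as the computation that produced \eqref{eq:recursion_middle}--\eqref{eq:recursion_sub} in the proof of Theorem~\ref{thm:solution_recursion}. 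An alternative, more structural route would be to exploit the $w_{\circ} = -1$ symmetry of type $C_{n}$ to derive a ``dual'' inverse Chevalley formula for $\e^{-\ve_{1}}$, giving a parallel recursion for auxiliary elements $\widetilde{\FF}_{l}$ that are a priori equal to $\FF_{2n-l}$, and comparing the two recursions; this however essentially duplicates the combinatorics of Section~\ref{sec:key}.
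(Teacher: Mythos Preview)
There is a concrete obstruction to your term-by-term plan. The operators $\st_{j}$ commute with line-bundle twists, $\st_{j}[\CO_{\bQG(x)}(\lambda)] = [\CO_{\bQG(xt_{\alpha_{j}^{\vee}})}(\lambda)]$, so every monomial contribution of the $I$-summand on your left-hand side lies in the span of the classes $[\CO_{\bQG(t_{\xi})}(-\ve_{I})]$, whereas the corresponding $I$-summand on your right-hand side lies in the span of $[\CO_{\bQG(t_{\xi})}(\ve_{I})]$. For $\ve_{I} \neq 0$ these two families are linearly independent in $K_{T}(\bQG)$, so no bijection between the monomials of $\prod_{j}\psi_{I}(j)$ and those of $\prod_{j}\psi_{I^{c}}(j)$ can match them; the sign flip in the line bundle cannot be ``absorbed by the translation shifts'' as you suggest. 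Any valid matching must therefore go across different values of $I$, and your proposal does not say how.

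The paper's proof supplies exactly this missing organization. Writing each $I$ uniquely as $A \sqcup \overline{B} \sqcup K \sqcup \overline{K}$ with $A, B, K \subset [1,n]$ pairwise disjoint (so that $-\ve_{I} = -\ve_{A}+\ve_{B}$ depends only on $(A,B)$), one first groups both sides by the common line-bundle weight, reducing the claim to an operator identity $\sum_{I \in \CJ_{A,B}^{k}} \prod_{j}\psi_{I}(j) = \sum_{J \in \CJ_{A,B}^{2n-k}} \prod_{j}\psi_{J}(j)$ for each pair $(A,B)$. The bijection used is $I \mapsto I^{\ast} := A \sqcup \overline{B} \sqcup M \sqcup \overline{M}$ with $M = [1,n] \setminus (A \sqcup B \sqcup K)$, which replaces the paired block $K$ by its complement in $[1,n] \setminus (A \sqcup B)$ while \emph{fixing} $A$ and $B$ (hence the line bundle). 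One then checks that $\prod_{j}\psi_{I}(j) = \prod_{j}\psi_{I^{\ast}}(j)$ already holds as operators whenever $K$ lies below $\max(A \sqcup B)$ or contains all of $\{\max(A\sqcup B)+1,\ldots,n\}$; the residual cases, which are precisely where the trinomials you flag as the ``main obstacle'' interact nontrivially, are dispatched by an induction on $|K \cap \{\max(A\sqcup B)+1,\ldots,n\}|$. Your map $I \mapsto I^{c}$ is the composition of $I \mapsto I^{\ast}$ with the bar involution $j \leftrightarrow \overline{j}$, and it is exactly that extra bar which swaps $A \leftrightarrow B$ and creates the line-bundle sign problem; once you group by weight first, you are led back to $I^{\ast}$.
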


Therefore, by multiplying both sides of \eqref{eq:solve_3} by $\e^{-n\ve_{1}}$, we obtain the following ``base'' relation. 
\begin{cor}
The following equality holds in $\KTQG$: 
\begin{equation} \label{eq:initial}
\sum_{l = 0}^{n-1} (-1)^{l} (\e^{-(n-l)\ve_{1}} + \e^{(n-l)\ve_{1}}) \FF_{l} + (-1)^{n}\FF_{n} = 0. 
\end{equation}
\end{cor}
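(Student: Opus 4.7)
The plan is to derive the ``base'' relation by a direct manipulation of equation \eqref{eq:solve_3}, using Proposition~\ref{prop:symmetry} to pair up terms symmetrically around the middle degree $l = n$. Concretely, I would multiply both sides of \eqref{eq:solve_3} by $\e^{-n\ve_{1}}$ (which is a unit in $R(T)$, so the equality is preserved) to obtain
\begin{equation}
\sum_{l = 0}^{2n} (-1)^{l} \e^{(l-n)\ve_{1}} \FF_{l} = 0.
\end{equation}
The point of this shift is that the exponent $l - n$ is negative for $l < n$, zero for $l = n$, and positive for $l > n$, so one can try to collect the $l$-th and $(2n-l)$-th terms into a single symmetric combination.

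Next, I would split the sum into three pieces: $0 \le l \le n-1$, $l = n$, and $n+1 \le l \le 2n$. The middle term contributes $(-1)^{n} \FF_{n}$. For the last piece, I reindex by $l = 2n - m$ so that $m$ ranges over $0, 1, \ldots, n-1$; since $(-1)^{2n-m} = (-1)^{m}$ and $(l - n)\ve_{1} = (n-m)\ve_{1}$, this piece becomes $\sum_{m=0}^{n-1} (-1)^{m} \e^{(n-m)\ve_{1}} \FF_{2n-m}$. Here is where Proposition~\ref{prop:symmetry} enters: it lets me replace $\FF_{2n-m}$ by $\FF_{m}$, so the last piece matches the first piece up to replacing $\e^{(l-n)\ve_{1}}$ by $\e^{(n-l)\ve_{1}}$. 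Summing the two pieces gives the factor $\e^{-(n-l)\ve_{1}} + \e^{(n-l)\ve_{1}}$ on each $\FF_{l}$ with $0 \le l \le n-1$, and the identity \eqref{eq:initial} follows.

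Since the ingredients \eqref{eq:solve_3} and Proposition~\ref{prop:symmetry} are already available, there is no genuine obstacle beyond performing the reindexing carefully and checking that the signs and exponents pair up correctly; the whole argument amounts to a short formal computation.
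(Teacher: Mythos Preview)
Your proposal is correct and is essentially identical to the paper's own derivation: the paper states the corollary immediately after Proposition~\ref{prop:symmetry}, obtaining it by multiplying both sides of \eqref{eq:solve_3} by $\e^{-n\ve_{1}}$ and using the symmetry $\FF_{k} = \FF_{2n-k}$ to pair the terms $l$ and $2n-l$. Your reindexing and sign/exponent check are exactly what is needed.
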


\subsection{The other relations}
From the base relation \eqref{eq:initial}, we will derive sufficiently many relations for a Borel-type presentation of $QK_{T}(G/B)$. 
For $k \ge 0$, let $h_{k}(x_{1}, \ldots, x_{2n})$ be the $k$-th complete symmetric polynomial in the ($2n$) variables $x_{1}, \ldots, x_{2n}$, i.e.,  
\begin{equation}
h_{k}(x_{1}, \ldots, x_{2n}) := \sum_{1 \le i_{1} \le \cdots \le i_{k} \le 2n} x_{i_{1}} x_{i_{2}} \cdots x_{i_{2n}}; 
\end{equation}
by convention, we set $h_{0}(x_{1}, \ldots, x_{2n}) := 0$. 
For $l \ge 0$ and $k \ge 1$, we set 
\begin{equation}
\FH_{l}^{k} := h_{l}(\e^{\ve_{1}}, \ldots, \e^{\ve_{k-1}}, \e^{\ve_{k}}, \e^{-\ve_{k}}, \e^{-\ve_{k-1}}, \ldots, \e^{-\ve_{1}}) \in R(T). 
\end{equation}

The aim of this subsection is to prove the following.

\begin{thm} \label{thm:system}
The following recurrence relations hold: 
\begin{equation} \label{eq:system}
\left\{ \begin{alignedat}{2}
& \FF_{0} & &{}= 1, \\ 
& \sum_{l = 0}^{n-k} (-1)^{l} (\FH_{n-l-k}^{k+1} - \FH_{n-l-k-2}^{k+1}) \FF_{l} & & {}= 0 \quad \text{for $0 \le k \le n-1$}. 
\end{alignedat}
\right. 
\end{equation}
\end{thm}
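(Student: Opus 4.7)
The plan has two parts: the base cases ($\FF_0 = 1$ and the $k = 0$ instance of the recursion), which I handle by direct computation, and the higher-$k$ cases, which I derive by applying Demazure operators to the ``base'' relation \eqref{eq:initial}. For the base cases: $\FF_0 = 1$ is immediate from Definition~\ref{def:FFl}, since only $I = \emptyset$ contributes and $\prod_j \psi_{\emptyset}(j) = 1$, $[\CO_{\bQG}(0)] = 1$; and the $k = 0$ case of \eqref{eq:system} is a reformulation of \eqref{eq:initial}: $h_m(x, x^{-1}) = \sum_{j=0}^{m}x^{m-2j}$ gives $\FH_m^1 - \FH_{m-2}^1 = \e^{m\ve_1} + \e^{-m\ve_1}$ for $m \ge 1$ and $\FH_0^1 - \FH_{-2}^1 = 1$, so substitution turns the $k = 0$ case of \eqref{eq:system} into \eqref{eq:initial}.

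For $1 \le k \le n-1$, I would proceed by induction on $k$, deriving the $k$-th relation by applying a suitable Demazure operator $\SD_{k}$ (or a short product involving indices around $k$) to the $(k-1)$-th relation. The crucial compatibility is \cite[Lemma~A.2]{MNS}: each $\FF_l$ is a $\BZ$-linear combination of classes of the form $[\CO_{\bQG(t_{\xi})}(\lambda)]$ (the $\psi_{I}(j)$ are $\BZ$-linear combinations of products of the translation operators $\st_i$ acting on line-bundle classes), so applying $\SD_{k}$ to any equation $\sum_l c_l \FF_l = 0$ with $c_l \in R(T)$ yields $\sum_l D_{k}(c_l) \FF_l = 0$. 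The induction then reduces to a combinatorial identity transforming the coefficients $\FH_m^{k} - \FH_{m-2}^{k}$ (appearing in the $(k-1)$-th relation) into $\FH_{m-1}^{k+1} - \FH_{m-3}^{k+1}$ (appearing in the $k$-th relation).

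The main obstacle is identifying the Demazure operator(s) and verifying this combinatorial identity. I would attack it via the generating-function identity
\[
\sum_{m \ge 0}(\FH_m^{j} - \FH_{m-2}^{j})\,t^m = (1 - t^2) \prod_{i=1}^{j} \frac{1}{(1-\e^{\ve_i}t)(1-\e^{-\ve_i}t)},
\]
tracking how the chosen Demazure operator modifies the right-hand side so as to introduce the new factor $1/[(1-\e^{\ve_{k+1}}t)(1-\e^{-\ve_{k+1}}t)]$ and shift the power of $t$. The delicate points are: boundary cases in $m$, where $\FH^{j}_m$ is not a genuine complete symmetric polynomial (in particular, ensuring that the apparently higher-$l$ terms on the extended-polynomial side vanish so that the sum truncates correctly at $l = n-k$), and handling the special simple root $\alpha_n = 2\ve_n$ of type $C$ should $\SD_n$ enter the argument (this is presumably among the ``technical difficulties peculiar to the case of type $C$'' that the introduction mentions).
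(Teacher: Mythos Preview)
Your overall strategy matches the paper's: the $k=0$ case is a direct rewriting of the base relation \eqref{eq:initial} via $\e^{m\ve_1}+\e^{-m\ve_1}=\FH_m^1-\FH_{m-2}^1$, and the higher-$k$ relations are obtained by successively applying Demazure operators, using the compatibility $\SD_i(\e^\nu[\CO_{\bQG(t_\xi)}(\lambda)])=D_i(\e^\nu)[\CO_{\bQG(t_\xi)}(\lambda)]$ together with the fact that each $\FF_l$ is a $\BZ$-linear combination of such classes. That part is correct.

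Where your plan diverges from the paper is in the bookkeeping of the inductive step. You propose to stay in the $\FH$-form throughout and show directly that $D_k$ (or a twisted version) sends $\FH_m^{k}-\FH_{m-2}^{k}$ to $\FH_{m-1}^{k+1}-\FH_{m-3}^{k+1}$. The paper does \emph{not} do this. Instead it keeps an explicit exponential form: from \eqref{eq:initial} one first multiplies by $\e^{\ve_1}$, applies $\SD_1$, and divides by $\e^{\ve_1}(1-\e^{\ve_1+\ve_2})$ to obtain \eqref{eq:secondary} (Proposition~\ref{prop:system_1}); then, inductively, one multiplies by $\e^{\ve_k}$, applies $\SD_k$, and divides by $\e^{\ve_k}(1-\e^{\ve_k+\ve_{k+1}})$ to obtain the increasingly complicated but explicit expressions \eqref{eq:system_arbitrary} (Proposition~\ref{prop:system_2}). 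Only at the very end are these explicit sums identified with $\FH_{m}^{k+1}-\FH_{m-2}^{k+1}$ via closed-form identities for complete symmetric polynomials in $x_1,\ldots,x_k,x_k^{-1},\ldots,x_1^{-1}$ (Proposition~\ref{prop:complete_symmetric}). The twist ``multiply by $\e^{\ve_k}$, then divide by $(1-\e^{\ve_k+\ve_{k+1}})$'' is essential; a bare $\SD_k$ applied to the $\FH$-coefficients does not produce the next $\FH$-form, and your proposal does not yet identify this normalization. Your generating-function idea is the right tool for the final identification step (indeed the paper uses exactly that product form), but it does not by itself replace the explicit Demazure computation.

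One minor correction: your worry about the long root $\alpha_n=2\ve_n$ is unnecessary. The induction uses only $\SD_1,\ldots,\SD_{n-1}$; the operator $\SD_n$ never enters. The ``type $C$ difficulties'' alluded to in the paper refer instead to the symmetry $\FF_k=\FF_{2n-k}$ (Proposition~\ref{prop:symmetry}), which is needed earlier to produce the base relation \eqref{eq:initial} itself.
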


First, by applying the Demazure operator $\SD_{1}$ to \eqref{eq:initial}, 
we obtain the following. 

\begin{prop} \label{prop:system_1}
We have 
\begin{equation} \label{eq:secondary}
\sum_{l = 0}^{n-1} (-1)^{l} \e^{-(n-l)\ve_{1}} \left( \sum_{r = 0}^{n-l-1} \e^{r(\ve_{1}-\ve_{2})} \right) \left( \sum_{s = 0}^{n-l-1} \e^{s(\ve_{1}+\ve_{2})} \right) \FF_{l} = 0. 
\end{equation}
\end{prop}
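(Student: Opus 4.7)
The plan is to apply the Demazure operator $\SD_1$ to both sides of \eqref{eq:initial}. First I note that each $\FF_l$ is a $\BZ$-linear combination of classes of the form $[\CO_{\bQG(t_\xi)}(-\ve_I)]$, since the operators $\st_k$ appearing in the $\psi_I(j)$'s send $\bQG(e)$ only to translations $\bQG(t_\xi)$; hence by the preceding lemma one has $\SD_1(\e^{c\ve_1}\FF_l) = D_1(\e^{c\ve_1})\FF_l$ and, in particular, $\SD_1(\FF_n) = \FF_n$. A case-by-case computation using the three-case formula for $D_1$ then yields the key identity
\begin{equation*}
D_1(\e^{-m\ve_1} + \e^{m\ve_1}) \;=\; \frac{(\e^{-m\ve_1}+\e^{m\ve_1}) - \e^{\alpha_1}(\e^{-m\ve_2}+\e^{m\ve_2})}{1-\e^{\alpha_1}}
\end{equation*}
for all $m \ge 0$, where $\alpha_1 = \ve_1 - \ve_2$; the three cases of $D_1$ (for $m=0$, $m=1$, and $m\ge 2$) can be merged via geometric-series manipulation.

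Applying $\SD_1$ to \eqref{eq:initial}, multiplying through by $1-\e^{\alpha_1}$, and then using \eqref{eq:initial} itself to replace the sum $\sum_l (-1)^l(\e^{-(n-l)\ve_1}+\e^{(n-l)\ve_1})\FF_l$ by $(-1)^{n+1}\FF_n$, the $\FF_n$-coefficients combine and the factor $\e^{\alpha_1}$ cancels, leaving the $\ve_2$-analog of the base relation
\begin{equation*}
\sum_{l=0}^{n-1}(-1)^l(\e^{-(n-l)\ve_2}+\e^{(n-l)\ve_2})\FF_l + (-1)^n\FF_n = 0.
\end{equation*}
To match this with the product form of \eqref{eq:secondary}, I would use the elementary identity $\e^{-m\ve_1}(1-\e^{m\alpha_1})(1-\e^{m(\ve_1+\ve_2)}) = (\e^{-m\ve_1}+\e^{m\ve_1}) - (\e^{-m\ve_2}+\e^{m\ve_2})$, verified by directly expanding the left-hand side. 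Multiplying the left-hand side of \eqref{eq:secondary} by $(1-\e^{\alpha_1})(1-\e^{\ve_1+\ve_2})$ converts it into $\sum_l (-1)^l\bigl[(\e^{-(n-l)\ve_1}+\e^{(n-l)\ve_1})-(\e^{-(n-l)\ve_2}+\e^{(n-l)\ve_2})\bigr]\FF_l$; by \eqref{eq:initial} together with its $\ve_2$-analog just derived, both inner sums equal $(-1)^{n+1}\FF_n$, so the difference vanishes. Since $\KTQG$ is torsion-free over the domain $R(T)$ (the semi-infinite Schubert classes form a topological $R(T)$-basis), multiplication by the nonzero element $(1-\e^{\alpha_1})(1-\e^{\ve_1+\ve_2})$ is injective, and \eqref{eq:secondary} follows.

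The main obstacle I anticipate is the algebraic bookkeeping in establishing the compact formula for $D_1(\e^{-m\ve_1}+\e^{m\ve_1})$ uniformly in $m$, and then carrying out the careful cancellations after $\SD_1$ is applied to \eqref{eq:initial}; the final torsion-freeness step is standard but should be stated explicitly because the intermediate passage goes through a factor of $(1-\e^{\alpha_1})(1-\e^{\ve_1+\ve_2})$ which must be divided out.
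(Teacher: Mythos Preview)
Your proof is correct, but it takes a genuinely different route from the paper's. The paper first multiplies the base relation \eqref{eq:initial} by $\e^{\ve_1}$ \emph{before} applying $\SD_1$; this has the effect that the $\FF_n$-term acquires coefficient $\e^{\ve_1}$, and since $\langle \ve_1,\alpha_1^\vee\rangle = 1$ one has $D_1(\e^{\ve_1})=0$, so the $\FF_n$-term is annihilated immediately. The remaining expression is then factored directly as $\e^{\ve_1}(1-\e^{\ve_1+\ve_2})$ times the left-hand side of \eqref{eq:secondary}, and the nonzero factor is divided out. You instead apply $\SD_1$ to \eqref{eq:initial} as written, which forces you to keep track of the $\FF_n$-term; you compensate by first isolating the ``$\ve_2$-analog'' of the base relation and then combining it with \eqref{eq:initial} via the identity $\e^{-m\ve_1}(1-\e^{m\alpha_1})(1-\e^{m(\ve_1+\ve_2)}) = (\e^{-m\ve_1}+\e^{m\ve_1})-(\e^{-m\ve_2}+\e^{m\ve_2})$. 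Your approach yields the $\ve_2$-analog of \eqref{eq:initial} as a pleasant byproduct and makes the underlying symmetry more visible, at the cost of one extra step; the paper's pre-multiplication trick is more economical but less transparent. Both arguments end by dividing out a nonzero element of $R(T)$ and therefore rely on the same torsion-freeness of $\KTQG$.
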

\begin{proof}
By multiplying both sides of \eqref{eq:initial} by $\e^{\ve_{1}}$, we obtain
\begin{equation}
\sum_{l = 0}^{n-1} (-1)^{l} (\e^{-(n-l-1)\ve_{1}} + \e^{(n-l+1)\ve_{1}}) \FF_{l} + (-1)^{n}\e^{\ve_{1}}\FF_{n} = 0. 
\end{equation}
Then, by applying $\SD_{1}$, we see that 
\begin{align}
0 &= \SD_{1}\left(\sum_{l = 0}^{n-1} (-1)^{l} (\e^{-(n-l-1)\ve_{1}} + \e^{(n-l+1)\ve_{1}}) \FF_{l} + (-1)^{n}e^{\ve_{1}}\FF_{n}\right) \\ 
&= \sum_{l = 0}^{n-1} (-1)^{l} (D_{1}(\e^{-(n-l-1)\ve_{1}}) + D_{1}(\e^{(n-l+1)\ve_{1}})) \FF_{l} + (-1)^{n}D_{1}(\e^{\ve_{1}})\FF_{n} \\ 
\begin{split}
&= \sum_{l = 0}^{n-1} (-1)^{l} \left( \frac{\e^{-(n-l-1)\ve_{1}}-\e^{\alpha_{1}}\e^{s_{1}(-(n-l-1)\ve_{1})}}{1-\e^{\alpha_{1}}} + \frac{\e^{(n-l+1)\ve_{1}}-\e^{\alpha_{1}}\e^{s_{1}((n-l+1)\ve_{1})}}{1-\e^{\alpha_{1}}} \right) \FF_{l} \\ 
& \hspace{100mm}  + (-1)^{n}\frac{\e^{\ve_{1}} - \e^{\alpha_{1}}\e^{s_{1}\ve_{1}}}{1-\e^{\alpha_{1}}}\FF_{n} 
\end{split} \\ 
\begin{split}
&= \sum_{l = 0}^{n-1} (-1)^{l} \left( \frac{\e^{-(n-l-1)\ve_{1}}-\e^{\ve_{1}-\ve_{2}}\e^{-(n-l-1)\ve_{2}} + \e^{(n-l+1)\ve_{1}}-\e^{\ve_{1}-\ve_{2}}\e^{(n-l+1)\ve_{2}}}{1-\e^{\alpha_{1}}} \right) \FF_{l} \\ 
& \hspace{100mm}  + (-1)^{n}\frac{\e^{\ve_{1}} - \e^{\ve_{1}-\ve_{2}}\e^{\ve_{2}}}{1-\e^{\alpha_{1}}}\FF_{n} 
\end{split} \\ 
&= \e^{\ve_{1}} \sum_{l = 0}^{n-1} (-1)^{l} \frac{\e^{-(n-l)\ve_{1}}-\e^{-(n-l)\ve_{2}} + \e^{(n-l)\ve_{1}}-\e^{(n-l)\ve_{2}}}{1-\e^{\alpha_{1}}} \FF_{l} \\ 
&= \e^{\ve_{1}} \sum_{l = 0}^{n-1} (-1)^{l} \frac{(\e^{-(n-l)\ve_{1}}-\e^{-(n-l)\ve_{2}})(1-\e^{(n-l)(\ve_{1}+\ve_{2})})}{1-\e^{\alpha_{1}}} \FF_{l} \\ 
&= \e^{\ve_{1}}(1-\e^{\ve_{1}+\ve_{2}}) \sum_{l = 0}^{n-1} (-1)^{l} \left( \sum_{s = 0}^{n-l-1} \e^{s(\ve_{1}+\ve_{2})} \right) \frac{\e^{-(n-l)\ve_{1}}-\e^{-(n-l)\ve_{2}}}{1-\e^{\alpha_{1}}} \FF_{l} \\ 
&= \e^{\ve_{1}}(1-\e^{\ve_{1}+\ve_{2}}) \sum_{l = 0}^{n-1} (-1)^{l} \left( \sum_{s = 0}^{n-l-1} \e^{s(\ve_{1}+\ve_{2})} \right) \frac{\e^{-(n-l)\ve_{1}}(1 -\e^{(n-l)(\ve_{1}-\ve_{2})})}{1-\e^{\ve_{1}-\ve_{2}}} \FF_{l} \\ 
&= \e^{\ve_{1}}(1-\e^{\ve_{1}+\ve_{2}}) \sum_{l = 0}^{n-1} (-1)^{l} \e^{-(n-l)\ve_{1}} \left( \sum_{r=0}^{n-l-1} \e^{r(\ve_{1}-\ve_{2})} \right) \left( \sum_{s = 0}^{n-l-1} \e^{s(\ve_{1}+\ve_{2})} \right) \FF_{l}.  
\end{align}
Finally, by dividing both sides of this equation by $\e^{\ve_{1}}(1-\e^{\ve_{1}+\ve_{2}})$, we obtain 
\begin{equation}
\sum_{l = 0}^{n-1} (-1)^{l} \e^{-(n-l)\ve_{1}} \left( \sum_{r=0}^{n-l-1} \e^{r(\ve_{1}-\ve_{2})} \right) \left( \sum_{s = 0}^{n-l-1} \e^{s(\ve_{1}+\ve_{2})} \right) \FF_{l} = 0, 
\end{equation}
as desired. 
This proves the proposition. 
\end{proof}

Then, by successively applying Demazure operators to \eqref{eq:secondary}, we obtain the other relations. 
For this purpose, we use the following easy lemma. 

\begin{lem} \label{lem:exponential_identity}
\begin{enu}
\item For $k, l \ge 0$ with $k+l\ge 1$ and $1 \le p < q \le n$, we have 
\begin{equation}
\e^{-k\ve_{p}}\e^{-l\ve_{q}} - \e^{l\ve_{p}}\e^{k\ve_{q}} = \e^{-k\ve_{p}}\e^{-l\ve_{q}} (1 - \e^{\ve_{p}+\ve_{q}}) \sum_{t = 0}^{k+l-1} \e^{t(\ve_{p}+\ve_{q})}. 
\end{equation}
\item For $k \ge 0$, we have 
\begin{equation}
\begin{split}
&\e^{-k\ve_{m}} \sum_{s = 0}^{k-1} \e^{s(\ve_{m}-\ve_{m+1})} - \e^{k\ve_{m}} \sum_{s = 1}^{k} \e^{s(-\ve_{m}+\ve_{m+1})} \\ 
& = \e^{-k\ve_{m}} (1-\e^{\ve_{m}+\ve_{m+1}}) \sum_{t = 0}^{k-1} \e^{t(\ve_{m}-\ve_{m+1})} \sum_{t = 0}^{k-1} \e^{t(\ve_{m}+\ve_{m+1})}. 
\end{split}
\end{equation}
\end{enu}
\end{lem}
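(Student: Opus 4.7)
The plan is to prove both identities by elementary algebraic manipulation, reducing each to the standard geometric series identity $(1-x)\sum_{t=0}^{N-1} x^t = 1 - x^N$ applied in the ring $R(T) = \BZ[P]$. No structural input from the semi-infinite flag manifold or the inverse Chevalley formula is needed; these are purely formal identities in $\BZ[P]$.

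For part (1), I would factor $\e^{-k\ve_p}\e^{-l\ve_q}$ out of the left-hand side, which rewrites it as $\e^{-k\ve_p}\e^{-l\ve_q}\bigl(1 - \e^{(k+l)(\ve_p+\ve_q)}\bigr)$. Applying the geometric series identity with $x = \e^{\ve_p+\ve_q}$ and $N = k+l$ (which is $\ge 1$ by hypothesis) expands the factor $1 - \e^{(k+l)(\ve_p+\ve_q)}$ as $(1-\e^{\ve_p+\ve_q})\sum_{t=0}^{k+l-1}\e^{t(\ve_p+\ve_q)}$, matching the right-hand side.

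For part (2), the main manipulation is a re-indexing of the second sum. Substituting $s' = k-s$ converts
\begin{equation}
\e^{k\ve_m}\sum_{s=1}^{k}\e^{s(-\ve_m+\ve_{m+1})} = \sum_{s'=0}^{k-1}\e^{s'\ve_m + (k-s')\ve_{m+1}} = \e^{k\ve_{m+1}}\sum_{s'=0}^{k-1}\e^{s'(\ve_m-\ve_{m+1})}.
\end{equation}
Then the common factor $\sum_{s=0}^{k-1}\e^{s(\ve_m-\ve_{m+1})}$ can be pulled out of both terms on the left, reducing the left-hand side to $(\e^{-k\ve_m} - \e^{k\ve_{m+1}})\sum_{s=0}^{k-1}\e^{s(\ve_m-\ve_{m+1})}$. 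Factoring out $\e^{-k\ve_m}$ and applying the geometric series identity to $1 - \e^{k(\ve_m+\ve_{m+1})}$ with $x = \e^{\ve_m+\ve_{m+1}}$ and $N = k$ then produces the right-hand side.

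I do not expect any real obstacle; the only step requiring a moment of care is the index change in part (2), after which everything reduces to geometric series manipulations. The case $k=0$ in part (2) is automatic since both sides are empty sums equal to $0$.
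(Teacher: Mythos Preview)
Your proposal is correct. The paper does not actually supply a proof for this lemma (it merely calls it an ``easy lemma'' and moves on), so your argument is exactly the kind of elementary verification the authors intend the reader to carry out: factoring and applying the geometric series identity, with the index shift $s'=k-s$ in part~(2) being the only nontrivial step.
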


\begin{prop} \label{prop:system_2}
For $2 \le k \le n-1$, we have 
\begin{equation} \label{eq:system_arbitrary}
\begin{split}
& \sum_{l = 0}^{n-k} (-1)^{l} \left( \sum_{r_{1}=k-1}^{n-l-1} \sum_{s_{1}=0}^{n-l-1-r_{1}} \sum_{r_{2}=k-2}^{r_{1}-1} \sum_{s_{2}=0}^{r_{1}-1-r_{2}} \cdots \sum_{r_{k-1}=1}^{r_{k-2}-1} \sum_{s_{k-1}=0}^{r_{k-2}-1-r_{k-1}} \right. \\ 
& \hspace{30mm} \e^{(l+r_{1}+2s_{1})\ve_{1}+(-r_{1}+r_{2}+2s_{2})\ve_{2}+\cdots+(-r_{k-2}+r_{k-1}+2s_{k-1})\ve_{k-1}+(-r_{k-1})\ve_{k}} \\ 
& \left. \hspace{40mm} \times \sum_{p_{k}=0}^{r_{k-1}-1}\e^{p_{k}(\ve_{k}-\ve_{k+1})} \sum_{q_{k}=0}^{r_{k-1}-1} \e^{q_{k}(\ve_{k}+\ve_{k+1})} \right) \FF_{l} = 0. 
\end{split}
\end{equation}
\end{prop}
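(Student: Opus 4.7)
The plan is to prove Proposition~\ref{prop:system_2} by induction on $k$, with Proposition~\ref{prop:system_1} playing the role of the base case. Indeed, with the convention $r_0 := n - l$, the formula \eqref{eq:system_arbitrary} also makes sense for $k = 1$ and then reduces to \eqref{eq:secondary}, so it is natural to view Proposition~\ref{prop:system_1} as the $k = 1$ case and to carry out an induction step from $k$ to $k + 1$.

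For the induction step, I would apply the Demazure operator $\SD_{k+1}$ to both sides of \eqref{eq:system_arbitrary} (after a preliminary multiplication by a suitable power of $\e^{\ve_{k+1}}$, in analogy with the $\e^{\ve_1}$-multiplication used in the proof of Proposition~\ref{prop:system_1}). Since each $\FF_l$ is a $\BZ$-linear combination of classes of the form $[\CO_{\bQG(t_\xi)}(-\ve_I)]$ (because the operators $\st_j$ send translations to translations), the lemma from Section~\ref{sec:semi-infinite} describing the action of $\SD_i$ on $\e^\nu [\CO_{\bQG(t_\xi)}(\lambda)]$ shows that $\SD_{k+1}$ acts on the coefficient of each $\FF_l$ via $D_{k+1}$, leaving $\FF_l$ itself unchanged. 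Thus the induction step reduces to a purely algebraic computation on the $R(T)$-coefficients.

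The key observation is that the base monomial $\e^{(l + r_1 + 2 s_1)\ve_1 + (-r_1 + r_2 + 2 s_2)\ve_2 + \cdots + (-r_{k-1})\ve_k}$ involves only $\ve_1, \ldots, \ve_k$, hence is $s_{k+1}$-invariant; since $D_{k+1}$ is $R(T)^{s_{k+1}}$-linear, it acts non-trivially only on the remaining factor $\sum_{p_k}\e^{p_k(\ve_k - \ve_{k+1})}\sum_{q_k}\e^{q_k(\ve_k + \ve_{k+1})}$. Rewriting this factor as $\sum_{p_k, q_k}\e^{(p_k + q_k)\ve_k + (q_k - p_k)\ve_{k+1}}$, applying the explicit formula for $D_{k+1}$, and then invoking Lemma~\ref{lem:exponential_identity}(2), one converts each contribution into a product of two geometric sums in the new variables $\ve_{k+1}, \ve_{k+2}$, multiplied by the factor $1 - \e^{\ve_{k+1} + \ve_{k+2}}$. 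Cancelling this last factor (together with the prefactor $\e^{\ve_{k+1}}$ introduced at the outset) yields an identity of the form \eqref{eq:system_arbitrary} with $k$ replaced by $k + 1$.

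The main obstacle will be the combinatorial bookkeeping: the outer summation bounds $r_{k-1} - 1$ appearing in the ranges of $p_k$ and $q_k$ must split cleanly into the new indices $r_k$, satisfying $1 \le r_k \le r_{k-1} - 1$, and $s_k$, satisfying $0 \le s_k \le r_{k-1} - 1 - r_k$, together with the innermost indices $p_{k+1}, q_{k+1} \in [0, r_k - 1]$. I would organize the calculation as an explicit change of summation variables from $(p_k, q_k)$ to $(r_k, s_k, p_{k+1}, q_{k+1})$, verifying its bijectivity by carefully tracking the telescoping sums produced by the Demazure formula and by Lemma~\ref{lem:exponential_identity}(2). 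The computation itself is mechanical but tedious, and the correct re-indexing is the delicate step.
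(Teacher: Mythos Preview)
Your proposal is correct and follows essentially the same approach as the paper's proof: induction on $k$ starting from Proposition~\ref{prop:system_1}, with the step from $k$ to $k+1$ carried out by multiplying by $\e^{\ve_{k+1}}$, applying $\SD_{k+1}$ (which acts as $D_{k+1}$ on the coefficients of $\FF_l$), invoking Lemma~\ref{lem:exponential_identity}(2), and dividing by $\e^{\ve_{k+1}}(1-\e^{\ve_{k+1}+\ve_{k+2}})$. The paper makes the reindexing explicit by splitting into the cases $p_k > q_k$ and $q_k > p_k$ and setting $r_k = |p_k - q_k|$, $s_k = \min(p_k, q_k)$, before Lemma~\ref{lem:exponential_identity}(2) produces the new innermost sums over $p_{k+1}, q_{k+1}$.
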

\begin{proof}
We prove the proposition by induction on $k$. 
First, we consider the case $k = 2$; in this case, $n \ge 3$. By multiplying both sides of \eqref{eq:secondary} by $\e^{(n-1)\ve_{1}}$, we obtain 
\begin{equation} \label{eq:key_identity_induction_k=2-1}
\sum_{l = 0}^{n-1} (-1)^{l} \e^{l\ve_{1}} \left( \sum_{r = 0}^{n-l-1} \sum_{s = 0}^{n-l-1} \e^{r(\ve_{1}-\ve_{2}) + s(\ve_{1}+\ve_{2})} \right) \FF_{l} = 0. 
\end{equation}
Then, by multiplying both sides of \eqref{eq:key_identity_induction_k=2-1} by $\e^{\ve_{2}}$, we obtain 
\begin{equation} \label{eq:key_identity_induction_k=2-2}
\sum_{l = 0}^{n-1} (-1)^{l} \left( \sum_{r = 0}^{n-l-1} \sum_{s = 0}^{n-l-1} \e^{(l+r+s)\ve_{1} + (-r+s+1)\ve_{2}} \right) \FF_{l} = 0. 
\end{equation}
Here note that 
$\pair{(l+r+s)\ve_{1} + (-r+s+1)\ve_{2}}{\alpha_{2}^{\vee}} = -r+s+1.$ 
Hence, by applying the Demazure operator $\SD_{2}$ to both sides of \eqref{eq:key_identity_induction_k=2-2}, we obtain 
\begin{equation} \label{eq:key_identity_induction_k=2-3}
\begin{split}
\sum_{l = 0}^{n-2} (-1)^{l} & \left( \underbrace{\sum_{r = 1}^{n-l-1} \sum_{s = 0}^{r-1} \e^{(l+r+s)\ve_{1} + (-r+s+1)\ve_{2}} (1 + \e^{\alpha_{2}} + \cdots + \e^{(r-s-1)\alpha_{2}})}_{(*)} \right. \\ 
& \left. -\underbrace{\sum_{r = 0}^{n-l-2} \sum_{s = r+1}^{n-l-1} \e^{(l+r+s)\ve_{1} + (-r+s+1)\ve_{2}} (\e^{-\alpha_{2}} + \e^{-2\alpha_{2}} + \cdots + \e^{-(-r+s)\alpha_{2}} )}_{(**)} \right) \FF_{l} = 0. 
\end{split}
\end{equation}
In $(*)$, we put 
\begin{equation} \label{eq:conv_1}
r_{1} = r-s, \quad s_{1} = s,
\end{equation}
while in $(**)$, we put 
\begin{equation} \label{eq:conv_2}
r_{1} = s-r, \quad s_{1} = r.
\end{equation} 
Then we see that 
\begin{align}
& \text{(LHS) of \eqref{eq:key_identity_induction_k=2-3}} \\ 
\begin{split}
&= \sum_{l = 0}^{n-2} (-1)^{l} \e^{l\ve_{1}} \left( \sum_{r_{1} = 1}^{n-l-1} \sum_{s_{1} = 0}^{n-l-1-r_{1}} \e^{(r_{1}+2s_{1})\ve_{1} + (-r_{1}+1)\ve_{2}} (1 + \e^{\alpha_{2}} + \cdots + \e^{(r_{1}-1)\alpha_{2}}) \right. \\ 
& \hspace{30mm} \left. - \sum_{r_{1}=1}^{n-l-1} \sum_{s_{1}=0}^{n-l-1-r_{1}} \e^{(r_{1}+2s_{1})\ve_{1} + (r_{1}+1)\ve_{2}} (\e^{-\alpha_{2}} + \e^{-2\alpha_{2}} + \cdots + \e^{-r_{1}\alpha_{1}}) \right) \FF_{l}
\end{split} \\ 
&= \sum_{l=0}^{n-2} (-1)^{l} \e^{l\ve_{1}+\ve_{2}} \left( \sum_{r_{1}=1}^{n-l-1} \sum_{s_{1}=0}^{n-l-1-r_{1}} \e^{(r_{1}+2s_{1})\ve_{1}} \right. \\ 
& \hspace{40mm} \left. \times \left( \e^{-r_{1}\ve_{2}} \sum_{t=0}^{r_{1}-1} \e^{t(\ve_{2}-\ve_{3})} - \e^{r_{1}\ve_{2}} \sum_{t=1}^{r_{1}} \e^{t(-\ve_{2}+\ve_{3})} \right) \right) \FF_{l} \\ 
\begin{split}
&= \sum_{l=0}^{n-2} (-1)^{l} \e^{l\ve_{1}+\ve_{2}} \left( \sum_{r_{1}=1}^{n-l-1} \sum_{s_{1}=0}^{n-l-1-r_{1}} \e^{(r_{1}+2s_{1})\ve_{1}} \right. \\ 
& \hspace{40mm} \left. \times \left( \e^{-r_{1}\ve_{2}} (1-\e^{\ve_{2}+\ve_{3}}) \sum_{p_{2}=0}^{r_{1}-1}\e^{p_{2}\ve_{2}-\ve_{3})} \sum_{q_{2}=0}^{r_{1}-1} \e^{q_{2}(\ve_{2}+\ve_{3})} \right) \right) \FF_{l}, 
\end{split} \label{eq:key_identity_induction_k=2-4}
\end{align}
where, for the last equality, we have used Lemma~\ref{lem:exponential_identity}\,(2). 
Hence, by dividing the rightmost-hand side of \eqref{eq:key_identity_induction_k=2-4} by $\e^{\ve_{2}}(1-\e^{\ve_{2}+\ve_{3}})$, we obtain 
\begin{align}
\sum_{l=0}^{n-2} (-1)^{l} \left( \sum_{r_{1}=1}^{n-l-1} \sum_{s_{1}=0}^{n-l-1-r_{1}} \e^{(l+r_{1}+2s_{1})\ve_{1}} \left( \e^{-r_{1}\ve_{2}} \sum_{p_{2}=0}^{r_{1}-1}\e^{p_{2}\ve_{2}-\ve_{3})} \sum_{q_{2}=0}^{r_{1}-1} \e^{q_{2}(\ve_{2}+\ve_{3})} \right) \right) \FF_{l} = 0. 
\end{align}
This proves the proposition for $k = 2$. 

Now, let $3 \le k \le n-1$. 
We assume that the proposition holds for $k-1$, and 
prove the assertion of the proposition for $k$; 
note that $n \ge k+1$ in this case. 
By the induction hypothesis, we have 
\begin{equation}
\begin{split}
& \sum_{l = 0}^{n-k+1} (-1)^{l} \Biggl( \sum_{r_{1}=k-2}^{n-l-1} \sum_{s_{1}=0}^{n-l-1-r_{1}} \sum_{r_{2}=k-3}^{r_{1}-1} \sum_{s_{2}=0}^{r_{1}-1-r_{2}} \cdots \sum_{r_{k-2}=1}^{r_{k-3}-1} \sum_{s_{k-2}=0}^{r_{k-3}-1-r_{k-2}} \\ 
& \hspace{30mm} \e^{(l+r_{1}+2s_{1})\ve_{1}+(-r_{1}+r_{2}+2s_{2})\ve_{2}+\cdots+(-r_{k-3}+r_{k-2}+2s_{k-2})\ve_{k-2}+(-r_{k-2})\ve_{k-1}} \\ 
& \hspace{40mm} \times \sum_{p_{k-1}=0}^{r_{k-2}-1}\e^{p_{k-1}(\ve_{k-1}-\ve_{k})} \sum_{q_{k-1}=0}^{r_{k-2}-1} \e^{q_{k-1}(\ve_{k-1}+\ve_{k})} \Biggr) \FF_{l} = 0. 
\end{split}
\end{equation}
By
\begin{enu}
\item multiplying both sides of this equation by $\e^{\ve_{k}}$, 
\item applying the Demazure operator $\SD_{k}$ to both sides, 
\item making a change of variables similar to \eqref{eq:conv_1} and \eqref{eq:conv_2}, and 
\item dividing both sides by $\e^{\ve_{k}}(1-\e^{\ve_{k}+\ve_{k+1}})$, 
\end{enu}
we deduce that 
\begin{equation}
\begin{split}
& \sum_{l = 0}^{n-k+1} (-1)^{l} \Biggl( \sum_{r_{1}=k-1}^{n-l-1} \sum_{s_{1}=0}^{n-l-1-r_{1}} \sum_{r_{2}=k-2}^{r_{1}-1} \sum_{s_{2}=0}^{r_{1}-1-r_{2}} \cdots \sum_{r_{k-1}=1}^{r_{k-2}-1} \sum_{s_{k-1}=0}^{r_{k-2}-1-r_{k-1}} \\ 
& \quad \e^{(l+r_{1}+2s_{1})\ve_{1}+(-r_{1}+r_{2}+2s_{2})\ve_{2}+\cdots+(-r_{k-3}+r_{k-2}+2s_{k-2})\ve_{k-2}+(-r_{k-2}+r_{k-1}+2s_{k-1})\ve_{k-1}+(-r_{k-1})\ve_{k}} \\ 
& \hspace{70mm} \times \sum_{p_{k} = 0}^{r_{k-1}-1} \e^{p_{k}(\ve_{k}-\ve_{k+1})} \sum_{q_{k} = 0}^{r_{k-1}-1} \e^{q_{k}(\ve_{k}+\ve_{k+1})} \Biggr) \FF_{l} = 0, 
\end{split}
\end{equation}
as desired. Thus, by induction on $k$, the proposition is proved. 
\end{proof}

Also, we can prove the following proposition for complete symmetric polynomials. Since the proof of this proposition is elementary, we leave it to the reader.  
\begin{prop} \label{prop:complete_symmetric}
In the Laurent polynomial ring $\BZ[x_{1}^{\pm 1}, \ldots, x_{n}^{\pm 1}]$, the following hold. 
\begin{enu}
\item We have $h_{0}(x_{1}, \ldots, x_{n}, x_{n}^{-1}, \ldots, x_{1}^{-1}) = 1$. 

\item For $m \ge 1$, we have 
\begin{equation}
x_{1}^{m} + x_{1}^{-m} = \begin{cases} h_{1}(x_{1}, x_{1}^{-1}) & \text{if $m = 1$}, \\ h_{m}(x_{1}, x_{1}^{-1}) - h_{m-2}(x_{1}, x_{1}^{-1}) & \text{if $m \ge 2$}. \end{cases}
\end{equation}

\item For $m \ge 1$, we have 
\begin{align}
& x_{1}^{-m} \left( \sum_{k = 0}^{m} (x_{1}x_{2}^{-1})^{k} \right) \left( \sum_{l = 0}^{m} (x_{1}x_{2})^{l} \right) \\ 
&= \begin{cases} h_{1}(x_{1}, x_{2}, x_{2}^{-1}, x_{1}^{-1}) & \text{if $m = 1$}, \\ h_{m}(x_{1}, x_{2}, x_{2}^{-1}, x_{1}^{-1}) - h_{m-2}(x_{1}, x_{2}, x_{2}^{-1}, x_{1}^{-1}) & \text{if $m \ge 2$}. \end{cases}
\end{align}

\item If $n \ge 3$, then for $m \ge 1$, we have 
\begin{align}
& \sum_{r_{1} = n-2}^{m+n-2} \sum_{s_{1}=0}^{m+n-2-r_{1}} \sum_{r_{2}=n-3}^{r_{1}-1} \sum_{s_{2}=0}^{r_{1}-1-r_{2}} \cdots \sum_{r_{n-2}=1}^{r_{n-3}-1} \sum_{s_{n-2}=0}^{r_{n-3}-1-r_{n-2}} \\ 
& \hspace{20mm} x_{1}^{-(m+n-2)+r_{1}+2s_{1}} x_{2}^{-r_{1}+r_{2}+2s_{2}+1} \cdots x_{n-2}^{-r_{n-3}+r_{n-2}+2s_{n-2}+1} x_{n-1}^{-r_{n-2}} \\ 
& \hspace{30mm} \times \left( \sum_{k = 0}^{r_{n-2}-1} (x_{n-1}x_{n}^{-1})^{k} \right) \left( \sum_{l = 0}^{r_{n-2}-1} (x_{n-1}x_{n})^l \right) \\ 
&= \begin{cases} h_{1}(x_{1}, \ldots, x_{n}, x_{n}^{-1}, \ldots, x_{1}^{-1}) & \text{if $m = 1$}, \\ h_{m}(x_{1}, \ldots, x_{n}, x_{n}^{-1}, \ldots, x_{1}^{-1}) - h_{m-2}(x_{1}, \ldots, x_{n}, x_{n}^{-1}, \ldots, x_{1}^{-1}) & \text{if $m \ge 2$}. \end{cases} 
\end{align}
\end{enu}
\end{prop}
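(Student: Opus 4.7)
My plan is to treat Parts (1)--(3) by direct calculation and then prove Part (4) by induction on $n$, using Part (3) as the base-case prototype.

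Part (1) is immediate, since $h_0$ is the empty product and equals $1$. For Part (2), I would compute $h_m(x_1, x_1^{-1}) = \sum_{k = 0}^m x_1^{m - 2k} = x_1^m + x_1^{m - 2} + \cdots + x_1^{-m}$; the telescoping $h_m - h_{m - 2} = x_1^m + x_1^{-m}$ is then immediate for $m \geq 2$, while the case $m = 1$ is direct.

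For Part (3), I would expand the LHS as $\sum_{k, l = 0}^{m} x_1^{k + l - m} x_2^{l - k}$; setting $a := k + l - m$ and $b := l - k$, each Laurent monomial $x_1^a x_2^b$ with $|a| + |b| \leq m$ and $a + b \equiv m \pmod{2}$ appears exactly once, and no other monomials appear. On the other side, the coefficient of $x_1^a x_2^b$ in $h_m(x_1, x_2, x_2^{-1}, x_1^{-1})$ is the number of pairs $(u_1, u_2) \in \mathbb{Z}_{\geq 0}^2$ with $u_1 \geq |a|$, $u_2 \geq |b|$, $u_1 \equiv a \pmod{2}$, $u_2 \equiv b \pmod{2}$, and $u_1 + u_2 = m$; a short count gives $(m - |a| - |b|)/2 + 1$ in the allowed range and $0$ elsewhere. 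Subtracting the analogous quantity for $h_{m - 2}$ leaves coefficient $1$ in the allowed range and $0$ outside, matching the LHS.

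For Part (4), I would induct on $n \geq 3$, with the $n = 3$ base case handled by the same monomial-counting method as in Part (3) (with the extra outer sum over $r_1, s_1$ producing the $x_1$-contribution). For the inductive step, I fix the outermost pair $(r_1, s_1)$ and observe that the remaining nested sum over $r_2, \ldots, r_{n - 2}, s_2, \ldots, s_{n - 2}, k, l$ in the variables $x_2, \ldots, x_n$ has exactly the structure of the LHS of Part (4) in $n - 1$ variables with parameter $m' = r_1 - n + 2$; when $m' \geq 1$, the induction hypothesis identifies it with $h_{m'}(x_2, \ldots, x_n, x_n^{-1}, \ldots, x_2^{-1}) - h_{m' - 2}(\ldots)$, and the boundary case $m' = 0$ (i.e. $r_1 = n - 2$) is handled by Part (1). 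The outer sum over $(r_1, s_1)$ is then a geometric-type sum that collects the $x_1$-contribution to match $h_m - h_{m - 2}$ in the full $n$-variable product. The cleanest way to carry out this last matching is through the generating function identity $\sum_{m \geq 0} h_m(x_1, \ldots, x_n, x_n^{-1}, \ldots, x_1^{-1}) t^m = \prod_{i = 1}^n \bigl((1 - t x_i)(1 - t x_i^{-1})\bigr)^{-1}$, so that $(1 - t^2)$ times the RHS is the generating function for $h_m - h_{m - 2}$, and the outer geometric sums supply exactly the $i = 1$ factor needed. The main obstacle will be the careful index bookkeeping in the inductive step, in particular aligning the $+1$ shifts in the middle exponents $x_2^{\cdots + 1}, \ldots, x_{n - 2}^{\cdots + 1}$ with the complete symmetric polynomial structure in $n$ variables.
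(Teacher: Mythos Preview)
The paper does not prove this proposition at all; it says ``Since the proof of this proposition is elementary, we leave it to the reader.''  So there is no paper argument to compare against, and your treatment of Parts~(1)--(3) by direct computation and monomial counting is correct and entirely appropriate.

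For Part~(4), your inductive strategy is the natural one, and your identification of the inner nested sum (after fixing $r_1,s_1$) with the $(n-1)$-variable instance at parameter $m'=r_1-n+2$ is correct for $n\ge 4$.  Two boundary points in your sketch need repair, however.  First, your claim that the $m'=0$ term is handled by Part~(1) is not right: when $r_1=n-2$ every $r_j$ sits at its minimum and $k=l=0$, and the inner sum collapses to the single monomial $x_{n-1}^{-1}$, not to $h_0=1$.  Second, at the base $n=3$ the inner piece is $x_2^{-r_1}\bigl(\sum_{k=0}^{r_1-1}\bigr)\bigl(\sum_{l=0}^{r_1-1}\bigr)$, which does not match Part~(3) on the nose: Part~(3) has prefactor $x_1^{-m}$ with sums up to $m$, and neither $m=r_1$ nor $m=r_1-1$ fits both simultaneously.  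A direct check at $n=3$, $m=1$ in fact gives
\[
\text{LHS}\;=\;x_1^{-1}x_2^{-1}+x_1x_2^{-1}+x_2^{-2}+x_2^{-1}x_3+x_2^{-1}x_3^{-1}+1\;=\;x_2^{-1}\cdot h_1(x_1,x_2,x_3,x_3^{-1},x_2^{-1},x_1^{-1}),
\]
not $h_1$ itself.  So either the printed exponent $x_{n-1}^{-r_{n-2}}$ (equivalently the upper limits $r_{n-2}-1$) carries a minor misprint, or your induction has to track an extra monomial factor throughout.  Either way, the ``index bookkeeping'' you flagged as the main obstacle is genuinely the crux, and the specific appeals to Part~(1) and Part~(3) in your sketch do not close it as written.
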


\begin{proof}[Proof of Theorem~\ref{thm:system}]
By multiplying both sides of \eqref{eq:secondary} by $\e^{\ve_{1}}$ and applying Proposition~\ref{prop:complete_symmetric}\,(3) with $x_{1} = \e^{\ve_{1}}$, $x_{2} = \e^{\ve_{2}}$, and $m = n-l-1$, we obtain 
\begin{equation}
\sum_{l = 0}^{n-1} (-1)^{l} (\FH_{n-l-1}^{2} - \FH_{n-l-3}^{2}) \FF_{l} = 0. 
\end{equation}
For $2 \le k \le n-1$, by multiplying both sides of \eqref{eq:system_arbitrary} by $\e^{-(2n-k-2)\ve_{1} + \ve_{2} + \cdots + \ve_{n-1}}$ and applying Proposition~\ref{prop:complete_symmetric}\,(4) with $x_{1} = \e^{\ve_{1}}, \ldots, x_{k+1} = \e^{\ve_{k+1}}$, and $m = n-l-k$, we obtain 
\begin{equation}
\sum_{l = 0}^{n-k} (-1)^{l} (\FH_{n-l-k}^{k+1} - \FH_{n-l-k-2}^{k+1}) \FF_{l} = 0. 
\end{equation}
This proves Theorem~\ref{thm:system}. 
\end{proof}

\subsection{The solution to the recurrence relations}
In this subsection, we solve the recurrence relations \eqref{eq:system} for $\FF_{l}$, $0 \le l \le n$, with coefficients in $R(T)$. 
For $0 \le l \le 2n$, we define $\FE_{l}^{n} \in R(T) \subset \KTQG$ by 
\begin{equation}
\FE_{l}^{n} := e_{l}(\e^{\ve_{1}}, \ldots, \e^{\ve_{n-1}}, \e^{\ve_{n}}, \e^{-\ve_{n}}, \e^{-\ve_{n-1}}, \ldots, \e^{-\ve_{1}}). 
\end{equation}
Observe that $\FE_{n+l}^{n} = \FE_{n-l}^{n}$ for $1 \le l \le n$ since 
\begin{equation} \label{eq:symmetry_of_elementary_symmetric}
e_{n+l}(x_{1}, \ldots, x_{n}, x_{n}^{-1}, \ldots, x_{1}^{-1}) = e_{n-l}(x_{1}, \ldots, x_{n}, x_{n}^{-1}, \ldots, x_{1}^{-1}) 
\end{equation} 
for $1 \le l \le n$. 
\begin{thm} \label{thm:relation_semi-infinite}
For $0 \le l \le 2n$, we have $\FF_{l} = \FE_{l}^{n}$. 
\end{thm}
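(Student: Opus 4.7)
The plan is to verify that the elements $\FE_l^n$, viewed inside $\KTQG$ via the inclusion $R(T) \subset \KTQG$, satisfy the same recurrence system \eqref{eq:system} as the $\FF_l$, and then invoke uniqueness. Since $\FH_0^{k+1} = 1$ and $\FH_m^{k+1} = 0$ for $m < 0$, the $k$-th equation of \eqref{eq:system} has the form
\[
(-1)^{n-k}\FF_{n-k} + \sum_{l=0}^{n-k-1}(-1)^l(\FH_{n-l-k}^{k+1} - \FH_{n-l-k-2}^{k+1})\FF_l = 0,
\]
so the system is upper-triangular with $\pm 1$ on the diagonal. Together with $\FF_0 = 1$, it uniquely determines $\FF_1, \FF_2, \ldots, \FF_n$. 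Since $\FE_0^n = 1$, in order to obtain $\FF_l = \FE_l^n$ for $0 \le l \le n$, it suffices to check that the same linear combinations of the $\FE_l^n$ vanish for $0 \le k \le n-1$.

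I would recast this as a generating-function identity in the Laurent polynomial ring $R(T) = \BZ[z_1^{\pm 1}, \ldots, z_n^{\pm 1}]$, where $z_i := \e^{\ve_i}$. Let
\[
E(t) := \prod_{i=1}^{n}(1+tz_i)(1+tz_i^{-1}) = \sum_{l \ge 0} \FE_l^n\, t^l, \qquad H_{k+1}(t) := \prod_{i=1}^{k+1}\frac{1}{(1-tz_i)(1-tz_i^{-1})} = \sum_{m \ge 0} \FH_m^{k+1}\, t^m.
\]
Splitting the target sum according to the two terms $\FH_{n-l-k}^{k+1}$ and $\FH_{n-l-k-2}^{k+1}$ and reindexing each subsum by $l+m$, one recognizes
\[
\sum_{l=0}^{n-k}(-1)^l(\FH_{n-l-k}^{k+1} - \FH_{n-l-k-2}^{k+1})\FE_l^n = [t^{n-k}]\bigl((1-t^2)E(-t)H_{k+1}(t)\bigr),
\]
where $[t^j]$ extracts the coefficient of $t^j$. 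A direct cancellation yields
\[
E(-t)H_{k+1}(t) = \prod_{i=k+2}^{n}\bigl(1 - (z_i + z_i^{-1})t + t^2\bigr) =: P_k(t),
\]
a polynomial in $t$ of degree $2(n-k-1)$ with coefficients in $R(T)$.

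Each quadratic factor $1 - (z_i+z_i^{-1})t + t^2$ is palindromic in $t$, so $P_k(t)$ is palindromic: $[t^j]P_k(t) = [t^{2(n-k-1)-j}]P_k(t)$. Taking $j = n-k$ gives $[t^{n-k}]P_k(t) = [t^{n-k-2}]P_k(t)$, so $[t^{n-k}]((1-t^2)P_k(t)) = 0$, as required. This yields $\FF_l = \FE_l^n$ for $0 \le l \le n$. To extend to the range $n+1 \le l \le 2n$, I would invoke Proposition~\ref{prop:symmetry}, which gives $\FF_l = \FF_{2n-l}$, together with the symmetry \eqref{eq:symmetry_of_elementary_symmetric} of elementary symmetric polynomials in a ``doubled'' variable set, which gives $\FE_l^n = \FE_{2n-l}^n$; combining these yields $\FF_l = \FF_{2n-l} = \FE_{2n-l}^n = \FE_l^n$.

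The substantive difficulties have already been absorbed in proving Theorem~\ref{thm:system} and Proposition~\ref{prop:symmetry}; what remains in the proof of Theorem~\ref{thm:relation_semi-infinite} is essentially formal. The only real observation is that the ``type $C$'' symmetry $z_i \leftrightarrow z_i^{-1}$ makes each quadratic factor of $P_k(t)$ palindromic, which is precisely the reason the recurrence produces the full elementary symmetric polynomials $\FE_l^n$ rather than some twisted variant. I do not anticipate an obstacle beyond book-keeping at this stage.
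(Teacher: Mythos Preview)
Your proof is correct and follows essentially the same route as the paper: both verify that $(\FE_l^n)_{0 \le l \le n}$ satisfies the recurrence system \eqref{eq:system} by multiplying the generating functions for $\FE_l^n$ and $\FH_m^{k+1}$ (with one sign flip) and extracting the coefficient of $t^{n-k}$, then invoke uniqueness and Proposition~\ref{prop:symmetry}. The only cosmetic difference is that the paper uses $E(t)H_{k+1}(-t)$ and phrases the vanishing via the identity \eqref{eq:symmetry_of_elementary_symmetric}, whereas you use $E(-t)H_{k+1}(t)$ and phrase it as palindromicity of $P_k(t)$; these are the same observation.
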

\begin{proof}
By Proposition~\ref{prop:symmetry}, it suffices to prove the theorem for $0 \le l \le n$. For this purpose, we will prove the following: 
\begin{align}
\FE_{0}^{n} &= 1, \label{eq:system_1} \\ 
\sum_{l = 0}^{n-k} (-1)^{l} (\FH_{n-l-k}^{k+1} - \FH_{n-l-k-2}^{k+1}) \FE_{l}^{n} &= 0, \quad 0 \le k \le n-1. \label{eq:system_2}
\end{align}
It follows from these equations that $(\FE_{0}^{n}, \ldots, \FE_{n}^{n})$ is a solution of the recurrence relations \eqref{eq:system}. 
Since the elements $\FF_{l}$, $0 \le l \le n$, are uniquely determined by the recurrence relations \eqref{eq:system}, we can conclude that $\FF_{l} = \FE_{l}^{n}$ for $0 \le l \le n$, as desired. 

First, equation \eqref{eq:system_1} is obvious from the definition of elementary symmetric polynomials. 
We will prove \eqref{eq:system_2}. 
Since 
\begin{equation}
\sum_{l = 0}^{\infty} h_{l}(x_{1}, \ldots, x_{d}) t^{l} = \prod_{i = 1}^{d} \frac{1}{1-x_{i}t} 
\end{equation}
for $1 \le k+1 \le n$, we have 
\begin{equation} \label{eq:sum_complete_symmetric}
\sum_{l = 0}^{\infty} \FH_{l}^{k+1} t^{l} = \left( \prod_{i = 1}^{k+1} \frac{1}{1 - \e^{\ve_{i}}t} \right) \left( \prod_{j = 1}^{k+1} \frac{1}{1 - \e^{-\ve_{j}}t} \right). 
\end{equation}
By multiplying both sides of \eqref{eq:sum_complete_symmetric} by $t^{2}$ and then subtracting the resulting equation from \eqref{eq:sum_complete_symmetric}, we obtain 
\begin{equation} \label{eq:generating_function_1}
\sum_{l = 0}^{\infty} (\FH_{l}^{k+1} - \FH_{l-2}^{k+1}) t^{l} = (1-t^{2}) \left( \prod_{i = 1}^{k+1} \frac{1}{1 - \e^{\ve_{i}}t} \right) \left( \prod_{j = 1}^{k+1} \frac{1}{1 - \e^{-\ve_{j}}t} \right). 
\end{equation}
Also, since 
\begin{equation}
\sum_{l = 0}^{d} e_{l}(x_{1}, \ldots, x_{d}) t^{l} = \prod_{i = 1}^{d} (1 + x_{i}t), 
\end{equation}
it follows that 
\begin{equation} \label{eq:generating_function_2}
\sum_{l = 0}^{2n} \FE_{l}^{n} t^{d} = \left( \prod_{i = 1}^{n} (1 + \e^{\ve_{i}}t) \right) \left( \prod_{j = 1}^{n} (1 + \e^{-\ve_{j}}t) \right). 
\end{equation}
By multiplying \eqref{eq:generating_function_2} and the equation obtained from \eqref{eq:generating_function_1} by replacing $t$ with $-t$, 
we obtain 
\begin{equation} \label{eq:generating_function_3}
\begin{split}
& \left( \sum_{l = 0}^{\infty} (-1)^{l} (\FH_{l}^{k+1} - \FH_{l-2}^{k+1}) t^{l} \right) \left( \sum_{m = 0}^{2n} \FE_{m}^{n} t^{m} \right) \\ 
& = (1-t^{2}) \left( \prod_{i = k+2}^{n} (1 + \e^{\ve_{i}}t) \right) \left( \prod_{j = k+2}^{n} (1 + \e^{-\ve_{j}}t) \right). 
\end{split}
\end{equation}
The left-hand side of \eqref{eq:generating_function_3} can be rewritten as: 
\begin{align}
& \left( \sum_{l = 0}^{\infty} (-1)^{l} (\FH_{l}^{k+1} - \FH_{l-2}^{k+1}) t^{l} \right) \left( \sum_{m = 0}^{2n} \FE_{m}^{n} t^{m} \right) \nonumber \\ 
&= \sum_{l = 0}^{\infty} \left( \sum_{d = 0}^{l} (-1)^{l-d} (\FH_{l-d}^{k+1} - \FH_{l-d-2}^{k+1}) \FE_{d}^{n} \right) t^{l}.  \label{eq:generating_function_3_LHS}
\end{align}
By setting $e_{m}(x_{1}, \ldots, x_{d}) = 0$ if $m < 0$ or $m > d$, the right-hand side of \eqref{eq:generating_function_3} can be rewritten as: 
\begin{align}
& (1-t^{2}) \left( \prod_{i = k+2}^{n} (1 + \e^{\ve_{i}}t) \right) \left( \prod_{j = k+2}^{n} (1 + \e^{-\ve_{j}}t) \right) \nonumber \\ 
&= (1-t^{2}) \sum_{m = 0}^{2(n-k-1)} e_{m}(\e^{\ve_{k+2}}, \ldots, \e^{\ve_{n}}, \e^{-\ve_{n}}, \ldots, \e^{-\ve_{k+2}}) t^{m} \nonumber \\ 
&= \sum_{m = 0}^{2(n-k-1)} (e_{m}(\e^{\ve_{k+2}}, \ldots, \e^{\ve_{n}}, \e^{-\ve_{n}}, \ldots, \e^{-\ve_{k+2}}) - e_{m-2}(\e^{\ve_{k+2}}, \ldots, \e^{\ve_{n}}, \e^{-\ve_{n}}, \ldots, \e^{-\ve_{k+2}})) t^{m}. \label{eq:generating_function_3_RHS}
\end{align}
Therefore, by comparing the coefficients of $t^{n-k}$ in \eqref{eq:generating_function_3_LHS} and \eqref{eq:generating_function_3_RHS}, we see that 
\begin{align}
& \sum_{d = 0}^{n-k} (-1)^{n-k-d} (\FH_{n-k-d}^{k+1} - \FH_{n-k-d-2}^{k+1}) \FE_{d}^{n} \\ 
&= e_{n-k}(\e^{\ve_{k+2}}, \ldots, \e^{\ve_{n}}, \e^{-\ve_{n}}, \ldots, \e^{-\ve_{k+2}}) - e_{n-k-2}(\e^{\ve_{k+2}}, \ldots, \e^{\ve_{n}}, \e^{-\ve_{n}}, \ldots, \e^{-\ve_{k+2}}) \\ 
&= e_{(n-k-1)+1}(\underbrace{\e^{\ve_{k+2}}, \ldots, \e^{\ve_{n}}, \e^{-\ve_{n}}, \ldots, \e^{-\ve_{k+2}}}_{\text{$2(n-k-1)$ variables}}) - e_{(n-k-1)-1}(\e^{\ve_{k+2}}, \ldots, \e^{\ve_{n}}, \e^{-\ve_{n}}, \ldots, \e^{-\ve_{k+2}}) \\ 
&= 0 \quad \text{by \eqref{eq:symmetry_of_elementary_symmetric}}. 
\end{align}
Thus, we deduce that 
\begin{equation}
\sum_{l = 0}^{n-k} (-1)^{l} (\FH_{n-l-k}^{k+1} - \FH_{n-l-k-2}^{k+1}) \FE_{l}^{n} = (-1)^{n-k} \sum_{l = 0}^{n-k} (-1)^{n-k-l} (\FH_{n-l-k}^{k+1} - \FH_{n-l-k-2}^{k+1}) \FE_{l}^{n} = 0. 
\end{equation}
This proves the theorem. 
\end{proof}

\section{Borel-type presentation} \label{sec:proof}
The aim of this section is to give a proof of Theorem~\ref{thm:main}. 
We assume that $G = \Sp$, the symplectic group of rank $n$. 
First, we derive some relations in $QK_{T}(G/B)$ from the corresponding ones in $\KTQG$, given in Theorem~\ref{thm:relation_semi-infinite}. 
Then, based on these relations, we prove the existence of a homomorphism $(R(T)\bra{Q})[z_{1}^{\pm 1}, \ldots, z_{n}^{\pm 1}] \rightarrow QK_{T}(G/B)$ of $R(T)\bra{Q}$-algebras which annihilates an ideal $\CI^{Q}$ of $(R(T)\bra{Q})[z_{1}^{\pm 1}, \ldots, z_{n}^{\pm 1}]$. 
Finally, we prove that the induced homomorphism $(R(T)\bra{Q})[z_{1}^{\pm 1}, \ldots, z_{n}^{\pm 1}]/\CI^{Q} \rightarrow QK_{T}(G/B)$ 
is, in fact, an isomorphism. 

\subsection{\texorpdfstring{Some relations in $QK_{T}(G/B)$}{Some relations in QKT(G/B)}}
We derive some relations in $QK_{T}(G/B)$ from the corresponding ones in $\KTQG$, given in Theorem~\ref{thm:relation_semi-infinite}). 

\begin{defn}
Let $I \subset [1, \overline{1}]$. 
\begin{enu}
\item For $1 \le j \le n$, we define $\varphi_{I}^{Q}(j) \in \BZ\bra{Q}$ by 
\begin{equation}
\vp_{I}^{Q}(j) := \begin{cases}
\dfrac{1}{1-Q_{j}} & \text{if $j, j+1 \in I$}, \\ 
1 & \text{otherwise}. 
\end{cases}
\end{equation}

\item For $2 \le j \le n$, we define $\varphi_{I}^{Q}(\overline{j}) \in \BZ\bra{Q}$ by 
\begin{equation}
\vp_{I}^{Q}(\overline{j}) := \begin{cases}
1 + \dfrac{Q_{j-1} Q_{j} \cdots Q_{n}}{1-Q_{j-1}} & \text{if $I = \{ \cdots < j-1 < \overline{j-1} < \cdots\}$}, \\ 
\dfrac{1}{1-Q_{j-1}} & \text{if $\overline{j}, \overline{j-1} \in I$}, \\ 
1 & \text{othewise}. 
\end{cases}
\end{equation}

\item We set $\vp_{I}^{Q}(\overline{1}) := 1 \in \BZ\bra{Q}$. 
\end{enu}
\end{defn}

\begin{defn}
For $0 \le l \le 2n$, we define $\CF_{l} \in QK_{T}(G/B)$ by 
\begin{equation}
\CF_{l} := \sum_{\substack{I \subset [1, \overline{1}] \\ |I| = l}} \left( \prod_{1 \le j \le \overline{1}} \vp_{I}^{Q}(j) \right) \left( {\prod_{j \in I}}^{\star} [\CO_{G/B}(-\ve_{j})] \right), 
\end{equation}
where ${\prod}^{\star}$ denotes the product with respect to the quantum product $\star$. 
\end{defn}

\begin{thm} \label{thm:semi-inf->QK}
For $0 \le l \le 2n$, we have $\Phi(\CF_{l}) = \FF_{l}$. 
\end{thm}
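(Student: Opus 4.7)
The strategy is to unwind $\Phi(\CF_{l})$ via iterated application of Proposition~\ref{prop:line_bundle_correspondence} and then match the resulting expression against $\FF_{l}$ position by position in $[1,\overline{1}]$.

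First, I observe that $\Phi$ intertwines multiplication by $Q_{k}$ on $QK_{T}(G/B)$ with the operator $\st_{k}$ on $\KTQG$: this follows immediately from $\Phi(\e^{\mu}Q^{\xi}[\CO^{w}]) = \e^{-\mu}[\CO_{\bQG(wt_{\xi})}]$ and $\st_{k}[\CO_{\bQG(x)}] = [\CO_{\bQG(xt_{\alpha_{k}^{\vee}})}]$. Extending by $\BZ$-linearity and convergence of formal sums, $\Phi(g(Q)\CZ) = g(\st)\Phi(\CZ)$ for any $g \in \BZ\bra{Q}$ and $\CZ \in QK_{T}(G/B)$, where $g(\st)$ is obtained from $g$ by the substitution $Q_{k} \mapsto \st_{k}$; here the $\st_{k}$ pairwise commute and also commute with tensoring by line bundles. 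I write $\vp_{I}^{\st}(j)$ for the operator obtained from $\vp_{I}^{Q}(j)$ under this substitution.

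Next, I claim that for every $I \subset [1,\overline{1}]$,
\[
\Phi\!\left(\prod_{j \in I}^{\star}[\CO_{G/B}(-\ve_{j})]\right) = \Biggl(\prod_{k \in I \cap [1,n]}(1 - \st_{k-1})\Biggr)\!\Biggl(\prod_{\overline{k} \in I}(1 - \st_{k})\Biggr)[\CO_{\bQG}(-\ve_{I})],
\]
with the convention $(1 - \st_{0}) := 1$. This is proved by induction on $|I|$: for any $j_{0} \in I$ with associated ``shift index'' $j_{0}-1$ (if $j_{0} \in [1,n]$) or $k$ (if $j_{0} = \overline{k}$), write $[\CO_{G/B}(-\ve_{j_{0}})]$ as the product of this $(1-Q)$-factor with $\frac{1}{1-Q}[\CO_{G/B}(-\ve_{j_{0}})]$, and apply Proposition~\ref{prop:line_bundle_correspondence} (the second equation when $j_{0} \in [1,n]$, the first when $j_{0} = \overline{k}$, using $\ve_{\overline{k}} = -\ve_{k}$) together with the inductive hypothesis. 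Combining this with the previous paragraph yields
\[
\Phi(\CF_{l}) = \sum_{\substack{I \subset [1,\overline{1}] \\ |I| = l}} \Biggl(\prod_{1 \le j \le \overline{1}} \vp_{I}^{\st}(j)\Biggr)\!\Biggl(\prod_{k \in I \cap [1,n]}(1 - \st_{k-1})\prod_{\overline{k} \in I}(1 - \st_{k})\Biggr)[\CO_{\bQG}(-\ve_{I})].
\]

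What remains is the purely combinatorial identity, for each $I \subset [1,\overline{1}]$,
\[
\Biggl(\prod_{1 \le j \le \overline{1}} \vp_{I}^{\st}(j)\Biggr)\!\Biggl(\prod_{k \in I \cap [1,n]}(1 - \st_{k-1})\prod_{\overline{k} \in I}(1 - \st_{k})\Biggr) = \prod_{1 \le j \le \overline{1}} \psi_{I}(j).
\]
Since the $\st_{k}$ pairwise commute, both sides decompose cleanly by the variable $\st_{k}$, and one may verify the equality for each $k \in [1,n]$ by collecting all factors involving $\st_{k}$: on the left these are $\vp_{I}^{\st}(k)$, $\vp_{I}^{\st}(\overline{k+1})$, and the factors $(1 - \st_{k})$ arising from $k+1 \in I$ or $\overline{k} \in I$, while on the right they are $\psi_{I}(k)$ and $\psi_{I}(\overline{k+1})$. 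A finite case analysis on the membership of $\{k, k+1, \overline{k+1}, \overline{k}\}$ in $I$ --- together with whether $I$ contains $k$ immediately followed by $\overline{k}$ --- establishes the identity.

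The crux is the exceptional configuration $I = \{\cdots < k < \overline{k} < \cdots\}$, for which the product of $\vp_{I}^{\st}(\overline{k+1}) = 1 + \frac{\st_{k}\st_{k+1}\cdots\st_{n}}{1 - \st_{k}}$ with the $(1 - \st_{k})$ factor contributed by $\overline{k} \in I$ equals
\[
\left(1 + \frac{\st_{k}\st_{k+1}\cdots\st_{n}}{1 - \st_{k}}\right)(1 - \st_{k}) = 1 - \st_{k} + \st_{k}\st_{k+1}\cdots\st_{n},
\]
which is precisely $\psi_{I}(\overline{k+1})$ in its exceptional case. The remaining (generic) subcases reduce to the trivial identities $\frac{1}{1-\st_{k}}(1-\st_{k}) = 1$ and $1 \cdot (1-\st_{k}) = 1-\st_{k}$, matching the two ordinary branches of $\psi_{I}$. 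I expect the main obstacle to be the bookkeeping in this case analysis --- in particular, verifying that the three-branch definitions of $\vp_{I}^{Q}(\overline{j})$ and $\psi_{I}(\overline{j})$, together with the $(1-\st_{k})$ contributions, align consistently across every configuration of $I$.
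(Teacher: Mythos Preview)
Your proposal is correct and follows essentially the same route as the paper: push $\Phi$ through the line-bundle products via Proposition~\ref{prop:line_bundle_correspondence} and the $Q_{k} \leftrightarrow \st_{k}$ correspondence, then verify the resulting combinatorial identity between the $\vp$-factors times the $(1-\st)$-contributions and the $\psi$-factors. The paper carries out exactly this case analysis, organizing it by introducing auxiliary factors $\theta_{I}^{\sinf}(j)$ (one per position $j \in [1,\overline{1}]$) that absorb the $(1-\st)$ terms and then checking $\vp_{I}^{\sinf}(j)\,\theta_{I}^{\sinf}(j) = \psi_{I}(j)$ position by position---which is your proposed verification with slightly cleaner bookkeeping at the boundary indices.
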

\begin{proof}
In this proof, we thought of $1/(1-\st_{i})$ for $1 \le i \le n$ as the infinite sum $\sum_{k = 0}^{\infty} \st_{i}^{k}$; 
this infinite sum is a well-defined operator on $\KTQG$. 
We define $\varphi_{I}^{\sinf}(j)$ for $I \subset [1, \overline{1}]$ and $j \in [1, \overline{1}]$ as follows. 
\begin{enu}
\item For $1 \le j \le n$, we set 
\begin{equation}
\varphi_{I}^{\sinf}(j) := \begin{cases}
\dfrac{1}{1-\st_{j}} & \text{if $j, j+1 \in I$}, \\ 
1 & \text{otherwise}. 
\end{cases}
\end{equation}
\item For $2 \le j \le n$, we set 
\begin{equation}
\vp_{I}^{\sinf}(\overline{j}) := \begin{cases}
1 + \dfrac{\st_{j-1} \st_{j} \cdots \st_{n}}{1-\st_{j-1}} & \text{if $I = \{ \cdots < j-1 < \overline{j-1} < \cdots\}$}, \\ 
\dfrac{1}{1-\st_{j-1}} & \text{if $\overline{j}, \overline{j-1} \in I$}, \\ 
1 & \text{othewise}. 
\end{cases}
\end{equation}
\item Set $\varphi_{I}^{\sinf}(\overline{1}) := 1$. 
\end{enu}
By Proposition~\ref{prop:line_bundle_correspondence}, we see that 
\begin{align}
\Phi(\CF_{l}) &= \sum_{\substack{I \subset [1, \overline{1}] \\ |I| = l}} \left( \prod_{1 \le j \le \overline{1}} \vp_{I}^{\sinf}(j) \right) \left( \prod_{\substack{1 \le j \le n \\ j \in I}} (1 - \st_{j-1}) \right) \left( \prod_{\substack{1 \le j \le n \\ \overline{j} \in I}} (1-\st_{j}) \right) [\CO_{\bQG}(-\ve_{I})] \\ 
&= \sum_{\substack{I \subset [1, \overline{1}] \\ |I| = l}} \left( \prod_{1 \le j \le \overline{1}} \vp_{I}^{\sinf}(j) \right) \left( \prod_{\substack{0 \le j \le n-1 \\ j+1 \in I}} (1 - \st_{j}) \right) \left( \prod_{\substack{2 \le j \le n+1 \\ \overline{j-1} \in I}} (1-\st_{j-1}) \right) [\CO_{\bQG}(-\ve_{I})]. 
\end{align}
Let us take and fix $I \subset [1, \overline{1}]$. For $1 \le j \le n-1$, we set 
\begin{equation}
\theta_{I}^{\sinf}(j) := \begin{cases}
1 - \st_{j} & \text{if $j+1 \in I$,} \\ 
1 & \text{otherwise.}
\end{cases}
\end{equation}
Then, for $1 \le j \le n-1$, it follows that 
\begin{align}
\vp_{I}^{\sinf}(j) \theta_{I}^{\sinf}(j) &= \begin{cases}
\dfrac{1}{1-\st_{j}} \cdot (1-\st_{j}) & \text{if $j \in I$ and $j+1 \in I$,} \\ 
1 \cdot 1 & \text{if $j \in I$ and $j+1 \notin I$,} \\ 
1 \cdot (1-\st_{j}) & \text{if $j \notin I$ and $j+1 \in I$,} \\ 
1 \cdot 1 & \text{if $j \notin I$ and $j+1 \notin I$} 
\end{cases} \\ 
&= \begin{cases}
1 - \st_{j} & \text{if $j \notin I$ and $j+1 \in I$,} \\ 
1 & \text{otherwise}
\end{cases} \\ 
&= \psi_{I}^{\sinf}(j). 
\end{align}
In addition, we set 
\begin{equation}
\theta_{I}^{\sinf}(n) := \begin{cases}
1 - \st_{n} & \text{if $\overline{n} \in I$,} \\ 
1 & \text{otherwise}. 
\end{cases}
\end{equation}
Then it follows that 
\begin{align}
\vp_{I}^{\sinf}(n) \theta_{I}^{\sinf}(n) &= \begin{cases}
\dfrac{1}{1-\st_{n}} \cdot (1-\st_{n}) & \text{if $n \in I$ and $\overline{n} \in I$,} \\ 
1 \cdot 1 & \text{if $n \in I$ and $\overline{n} \notin I$,} \\ 
1 \cdot (1-\st_{n}) & \text{if $n \notin I$ and $\overline{n} \in I$,} \\ 
1 \cdot 1 & \text{if $n \notin I$ and $\overline{n} \notin I$}
\end{cases} \\ 
&= \begin{cases}
1-\st_{n} & \text{if $n \notin I$ and $\overline{n} \in I$,} \\ 
1 & \text{otherwise}
\end{cases} \\ 
&= \psi_{I}^{\sinf}(n). 
\end{align}
Also, for $2 \le j \le n$, we set 
\begin{equation}
\theta_{I}^{\sinf}(\overline{j}) := \begin{cases}
1 - \st_{j-1} & \text{if $\overline{j-1} \in I$,} \\ 
1 & \text{otherwise.}
\end{cases}
\end{equation}
Then, for $2 \le j \le n$, it follows that 
\begin{align}
\vp_{I}^{\sinf}(\overline{j}) \theta_{I}^{\sinf}(\overline{j}) &= \begin{cases}
\left( 1 + \dfrac{\st_{j-1} \st_{j} \cdots \st_{n}}{1 - \st_{j-1}} \right) \cdot (1 - \st_{j-1}) & \text{if $I = \{ \cdots < j-1 < \overline{j-1} < \cdots \}$,} \\ 
1 \cdot (1-\st_{j-1}) & \text{\parbox{15em}{if $\overline{j} \notin I$, $\overline{j-1} \in I$, and $I$ is not of the above form,}} \\ 
1 \cdot 1 & \text{if $\overline{j} \notin I$ and $\overline{j-1} \notin I$,} \\
\dfrac{1}{1-\st_{j-1}} \cdot (1-\st_{j-1}) & \text{if $\overline{j} \in I$ and $\overline{j-1} \in I$,} \\ 
1 \cdot 1 & \text{if $\overline{j} \in I$ and $\overline{j-1} \notin I$}
\end{cases} \\ 
&= \begin{cases}
1 - \st_{j-1} + \st_{j-1}\st_{j} \cdots \st_{n} & \text{if $I = \{ \cdots < j-1 < \overline{j-1} < \cdots \}$,} \\ 
1 - \st_{j-1} & \text{\parbox{15em}{if $\overline{j} \notin I$, $\overline{j-1} \in I$, and $I$ is not of the above form,}} \\ 
1 & \text{otherwise}
\end{cases} \\ 
&= \psi_{I}^{\sinf}(\overline{j}). 
\end{align}
Therefore, by setting $\theta_{I}^{\sinf}(\overline{1}) := 1$, we deduce that 
\begin{align}
\Phi(\CF_{l}) &= \sum_{\substack{I \subset [1, \overline{1}] \\ |I| = l}} \left( \prod_{1 \le j \le \overline{1}} \vp_{I}^{\sinf}(j) \theta_{I}^{\sinf}(j) \right) [\CO_{\bQG}(-\ve_{I})] \\ 
&= \sum_{\substack{I \subset [1, \overline{1}] \\ |I| = l}} \left( \prod_{1 \le j \le \overline{1}} \psi_{I}^{\sinf}(j) \right) [\CO_{\bQG}(-\ve_{I})] \\ 
&= \FF_{l}, 
\end{align}
as desired. 
This proves the theorem. 
\end{proof}

For $0 \le l \le 2n$, we define an element $\CE_{l} \in R(T) \subset QK_{T}(G/B)$ by 
\begin{align}
\CE_{l} &:= e_{l}(\e^{-\ve_{1}}, \ldots, \e^{-\ve_{n-1}}, \e^{-\ve_{n}}, \e^{\ve_{n}}, \e^{\ve_{n-1}}, \ldots, \e^{\ve_{1}}) \\ 
&= e_{l}(\e^{\ve_{1}}, \ldots, \e^{\ve_{n-1}}, \e^{\ve_{n}}, \e^{-\ve_{n}}, \e^{-\ve_{n-1}}, \ldots, \e^{-\ve_{1}}). 
\end{align}
\begin{cor} \label{cor:relation_QK}
For $0 \le l \le 2n$, we have $\CF_{l} = \CE_{l}$. 
\end{cor}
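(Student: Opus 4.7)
My plan is to transport the identity $\FF_l = \FE_l^n$ of Theorem~\ref{thm:relation_semi-infinite} from $\KTQG$ back to $QK_T(G/B)$ via the $R(T)$-module isomorphism $\Phi$ of Theorem~\ref{thm:quantum=semi-infinite}, and conclude by injectivity. Concretely, I would show that $\Phi(\CF_l) = \Phi(\CE_l)$ in $\KTQG$, whence $\CF_l = \CE_l$.

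The left-hand side is immediate: by Theorem~\ref{thm:semi-inf->QK} we have $\Phi(\CF_l) = \FF_l$, and by Theorem~\ref{thm:relation_semi-infinite} we have $\FF_l = \FE_l^n$. For the right-hand side, I would identify $\Phi(\CE_l)$ directly. Since $\CE_l \in R(T) \subset QK_T(G/B)$, writing $\CE_l = \sum_{\mu} c_{\mu} \e^{\mu}$ and regarding this as $\sum_{\mu} c_{\mu} \e^{\mu} Q^0 [\CO^e]$ (using $[\CO^e] = 1 \in K_T(G/B)$), the first defining property of $\Phi$ yields
\begin{equation}
\Phi(\CE_l) = \sum_{\mu} c_{\mu} \e^{-\mu} [\CO_{\bQG(e)}] = \sum_{\mu} c_{\mu} \e^{-\mu}.
\end{equation}
Thus $\Phi$ restricted to $R(T)$ is the involution $\e^{\mu} \mapsto \e^{-\mu}$, which in type $C$ agrees with the action of $w_{\circ} = -1$.

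The key observation is that $\CE_l = e_l(\e^{\ve_1}, \ldots, \e^{\ve_n}, \e^{-\ve_n}, \ldots, \e^{-\ve_1})$ is a symmetric polynomial in the full set of weights $\{\pm \ve_i\}$ of the standard representation, hence $W$-invariant and in particular $w_{\circ}$-invariant. Therefore the involution $\e^{\mu} \mapsto \e^{-\mu}$ fixes $\CE_l$, giving $\Phi(\CE_l) = \CE_l = \FE_l^n$ (the last equality being an equality of elements of $R(T)$, by symmetry of $e_l$). Combining, $\Phi(\CF_l) = \FE_l^n = \Phi(\CE_l)$, and injectivity of $\Phi$ yields the claim. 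There is essentially no obstacle: the substantive work has already been done in Theorems~\ref{thm:semi-inf->QK} and \ref{thm:relation_semi-infinite}, and the only point requiring care is verifying that the sign twist built into $\Phi$ is invisible on the $W$-invariant class $\CE_l$, which is the type-$C$ phenomenon $w_{\circ} = -1$.
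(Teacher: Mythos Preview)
Your proof is correct and follows essentially the same approach as the paper: apply $\Phi$, use Theorems~\ref{thm:semi-inf->QK} and \ref{thm:relation_semi-infinite}, and conclude by injectivity. The only difference is cosmetic: the paper absorbs the sign twist into the very definition of $\CE_l$ (writing it first as $e_l(\e^{-\ve_1},\ldots,\e^{-\ve_n},\e^{\ve_n},\ldots,\e^{\ve_1})$ so that $\Phi(\CE_l)=\FE_l^n$ on the nose), whereas you start from the other form and invoke $w_\circ=-1$ to see that the twist is invisible on this $W$-invariant element.
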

\begin{proof}
By Theorems~\ref{thm:semi-inf->QK} and \ref{thm:relation_semi-infinite}, we have
\begin{equation}
\Phi(\CF_{l}-\CE_{l}) = \FF_{l}-\FE_{l}^{n} = 0. 
\end{equation}
Since $\Phi$ is injective, we conclude that $\CF_{l} = \CE_{l}$, as desired. 
\end{proof}

As a consequence of the proof of Theorem~\ref{thm:semi-inf->QK}, we obtain an explicit expression, in terms of line bundles, of the Schubert classes $[\CO^{s_{1}s_{2} \cdots s_{k}}]$ and $[\CO^{s_{1}s_{2} \cdots s_{n}s_{n-1} \cdots s_{k}}]$ for $1 \le k \le n$. 
\begin{defn}
\begin{enu}
\item For $1 \le k \le n$ and $0 \le l \le k$, we define an element $\CF_{l}^{k} \in QK_{T}(G/B)$ by 
\begin{equation}
\CF_{l}^{k} := \sum_{\substack{I \subset [1, k] \\ |I| = l}} \left( \prod_{1 \le j \le \overline{1}} \varphi_{I}^{Q}(j) \right) \left( {\prod_{j \in I}}^{\star} [\CO_{G/B}(-\ve_{j})] \right). 
\end{equation}
\item For $1 \le k \le n$ and $0 \le l \le 2n-k$, we define an element $\CF_{l}^{\overline{k}} \in QK_{T}(G/B)$ by 
\begin{equation}
\CF_{l}^{\overline{k}} := \sum_{\substack{I \subset [1, \overline{k+1}] \\ |I| = l}} \left( \prod_{1 \le j \le \overline{1}} \varphi_{I}^{Q}(j) \right) \left( {\prod_{j \in I}}^{\star} [\CO_{G/B}(-\ve_{j})] \right), 
\end{equation}
where we understand that $\overline{n+1} := n$. 
\end{enu}
\end{defn}

We can prove the following theorem; its proof is almost the same as that of Theorem~\ref{thm:semi-inf->QK}. 
\begin{thm}
\begin{enu}
\item For $1 \le k \le n$ and $0 \le l \le k$, we have $\Phi(\CF_{l}^{k}) = \FF_{l}^{k}$. 
\item For $1 \le k \le n$ and $0 \le l \le 2n-k$, we have $\Phi(\CF_{l}^{\overline{k}}) = \FF_{l}^{\overline{k}}$. 
\end{enu}
\end{thm}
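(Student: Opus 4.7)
The plan is to adapt the proof of Theorem~\ref{thm:semi-inf->QK} essentially verbatim, merely restricting the range of the index set $I$ from $[1, \overline{1}]$ to $[1, k]$ in part~(1), or to $[1, \overline{k+1}]$ in part~(2). First I would interpret each occurrence of $1/(1 - Q_{j})$ in $\varphi_{I}^{Q}(j)$ as the geometric series in $R(T)\bra{Q}$, so that $\CF_{l}^{k}$ and $\CF_{l}^{\overline{k}}$ are well-defined elements of $QK_{T}(G/B)$. Next I apply $\Phi$ termwise and invoke Proposition~\ref{prop:line_bundle_correspondence}: for each $j \in I$ with $1 \le j \le n$, pulling the quantum factor $[\CO_{G/B}(-\ve_{j})]$ through $\Phi$ produces an auxiliary factor $(1 - \st_{j-1})$ on the $\KTQG$ side; for each $\overline{j} \in I$ with $1 \le j \le n$, pulling $[\CO_{G/B}(\ve_{j})]$ through produces $(1 - \st_{j})$. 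Simultaneously, each coefficient $\varphi_{I}^{Q}(j)$ translates to $\varphi_{I}^{\sinf}(j)$ (the same expression with every $Q_{j}$ replaced by $\st_{j}$), interpreted as the geometric series operator on $\KTQG$.

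Then I would appeal to the key termwise identity
\[
\varphi_{I}^{\sinf}(j) \cdot \theta_{I}^{\sinf}(j) = \psi_{I}(j), \qquad 1 \le j \le \overline{1},
\]
which is established case-by-case in the proof of Theorem~\ref{thm:semi-inf->QK}. This identity holds in $\End_{\BC}(K_{T}(\bQG))$ and depends only on the local combinatorics of $I$ near the index $j$; in particular, the verification never uses the full range $[1, \overline{1}]$ for $I$, so it carries over without modification when $I$ is restricted to $[1, k]$ or to $[1, \overline{k+1}]$. Collecting factors yields
\[
\Phi(\CF_{l}^{k}) = \sum_{\substack{I \subset [1, k] \\ |I| = l}} \Bigl( \prod_{1 \le j \le \overline{1}} \psi_{I}(j) \Bigr) [\CO_{\bQG}(-\ve_{I})] = \FF_{l}^{k},
\]
and analogously $\Phi(\CF_{l}^{\overline{k}}) = \FF_{l}^{\overline{k}}$, in accordance with Definition~\ref{def:FFl}.

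The only thing to double-check, and it is minor, is that the restricted range of $I$ does not perturb any of the case analyses underlying $\varphi_{I}^{\sinf}(j)\,\theta_{I}^{\sinf}(j) = \psi_{I}(j)$. For $I \subset [1, k]$, any $\psi_{I}(j)$ referencing an element outside $[1, k]$ falls into its ``otherwise'' branch and reduces to $1$; the corresponding $\varphi_{I}^{\sinf}(j)$ and $\theta_{I}^{\sinf}(j)$ likewise trivialize, so the product structure is preserved. The same observation applies to $I \subset [1, \overline{k+1}]$, using the convention $\overline{n+1} := n$. I therefore do not expect any genuine obstacle: the whole argument is a bookkeeping restriction of the proof of Theorem~\ref{thm:semi-inf->QK}, with the sole point of vigilance being the consistent handling of the boundary indices $k$ and $\overline{k+1}$.
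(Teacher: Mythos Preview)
Your proposal is correct and takes essentially the same approach as the paper, which explicitly states that the proof is almost the same as that of Theorem~\ref{thm:semi-inf->QK}. Your additional care in checking that the boundary indices $k$ and $\overline{k+1}$ cause no trouble in the case analysis for $\varphi_{I}^{\sinf}(j)\,\theta_{I}^{\sinf}(j) = \psi_{I}(j)$ is exactly the point of vigilance the paper leaves implicit.
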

Hence, by Theorem~\ref{thm:solution_recursion}, we obtain the following. 
\begin{cor} \label{cor:explicit_classes}
\begin{enu}
\item For $1 \le k \le n$, the following equality holds in $QK_{T}(G/B)$: 
\begin{equation}
[\CO^{s_{1} \cdots s_{k}}] = \sum_{l=0}^{k} (-1)^{l} \e^{-l\ve_{1}} \CF_{l}^{k}. 
\end{equation}
\item For $1 \le k \le n$, the following equality holds in $QK_{T}(G/B)$: 
\begin{equation}
[\CO^{s_{1} \cdots s_{n} s_{n-1} \cdots s_{k}}] = \sum_{l = 0}^{2n-k} (-1)^{l} \e^{-l\ve_{1}} \CF_{l}^{\overline{k}}. 
\end{equation}
\end{enu}
\end{cor}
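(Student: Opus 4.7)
The plan is to deduce both parts of the corollary from Theorem~\ref{thm:solution_recursion} by pulling back along the inverse of the $R(T)$-module isomorphism $\Phi : QK_T(G/B) \xrightarrow{\sim} K_T(\bQG)$ of Theorem~\ref{thm:quantum=semi-infinite}, using the unnamed theorem stated immediately above (that $\Phi(\CF_l^k) = \FF_l^k$ and $\Phi(\CF_l^{\overline k}) = \FF_l^{\overline k}$) to transport the right-hand sides of Theorem~\ref{thm:solution_recursion} back to $QK_T(G/B)$.

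For part (1), I would first apply the formula in Theorem~\ref{thm:quantum=semi-infinite} with $\mu = 0$ and $\xi = 0$ to get $\Phi([\CO^{s_1 s_2 \cdots s_k}]) = [\CO_{\bQG(s_1 s_2 \cdots s_k)}] = \FP_k$. Next, combining Theorem~\ref{thm:solution_recursion}(1) with $\Phi(\CF_l^k) = \FF_l^k$ yields
\begin{equation*}
\FP_k = \sum_{l=0}^{k} (-1)^l \e^{l\ve_1}\FF_l^k = \sum_{l=0}^{k} (-1)^l \e^{l\ve_1}\Phi(\CF_l^k).
\end{equation*}
Then I would invoke the sign-twisted equivariance $\e^{l\ve_1}\Phi(\CF_l^k) = \Phi(\e^{-l\ve_1}\CF_l^k)$, which follows from the formula $\Phi(\e^\mu Q^\xi [\CO^w]) = \e^{-\mu}[\CO_{\bQG(wt_\xi)}]$ extended $R(T)\bra{Q}$-linearly over the Schubert basis, to rewrite the right-hand side as $\Phi\bigl(\sum_{l=0}^{k} (-1)^l \e^{-l\ve_1}\CF_l^k\bigr)$. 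Applying the injectivity of $\Phi$ yields the claimed identity $[\CO^{s_1 \cdots s_k}] = \sum_{l=0}^{k} (-1)^l \e^{-l\ve_1}\CF_l^k$.

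Part (2) will follow by an identical argument, where I replace $\FP_k$ with $\FQ_k$, the Weyl group element $s_1 s_2 \cdots s_k$ with $s_1 \cdots s_n s_{n-1} \cdots s_k$, the upper index $k$ in the sum with $2n-k$, and $\CF_l^k, \FF_l^k$ with $\CF_l^{\overline k}, \FF_l^{\overline k}$; then I invoke Theorem~\ref{thm:solution_recursion}(2) in place of Theorem~\ref{thm:solution_recursion}(1).

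The only subtlety is the sign twist on $\e^\mu$ under $\Phi$ (stemming from the $-w_\circ$-twist in Kato's convention), but this is immediate from the explicit formula in Theorem~\ref{thm:quantum=semi-infinite} together with the fact that every element of $QK_T(G/B)$ expands in the Schubert basis with $R(T)\bra{Q}$-coefficients. Once the identifications $\Phi(\CF_l^k) = \FF_l^k$ and $\Phi(\CF_l^{\overline k}) = \FF_l^{\overline k}$ are in hand, the corollary is essentially a direct substitution.
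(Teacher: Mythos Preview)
Your proposal is correct and follows precisely the argument the paper intends: the paper simply writes ``Hence, by Theorem~\ref{thm:solution_recursion}, we obtain the following'' after stating $\Phi(\CF_l^k) = \FF_l^k$ and $\Phi(\CF_l^{\overline k}) = \FF_l^{\overline k}$, and your write-up supplies exactly the details (the identification $\Phi([\CO^{s_1\cdots s_k}]) = \FP_k$, the sign twist $\e^{l\ve_1}\Phi(\CF_l^k) = \Phi(\e^{-l\ve_1}\CF_l^k)$, and injectivity of $\Phi$) that the paper leaves implicit.
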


\subsection{The existence of a homomorphism}
It follows from Corollary~\ref{cor:relation_QK} that there exists a homomorphism $(R(T)\bra{Q})[z_{1}^{\pm 1}, \ldots, z_{n}^{\pm 1}] \rightarrow QK_{T}(G/B)$ 
of $R(T)\bra{Q}$-algebras, 
which eventually gives a Borel-type presentation of $QK_{T}(G/B)$. 

\begin{defn}
We define a homomorphism $\widehat{\Psi}^{Q}: (R(T)\bra{Q})[z_{1}^{\pm 1}, \ldots, z_{n}^{\pm 1}] \rightarrow QK_{T}(G/B)$ of $R(T)\bra{Q}$-algebras by: 
\begin{equation}
z_{j} \mapsto \frac{1}{1-Q_{j}} [\CO_{G/B}(-\ve_{j})], \quad 1 \le j \le n. 
\end{equation}
\end{defn}

Let us compute the images $\widehat{\Psi}^{Q}(z_{j}^{-1})$, $1 \le j \le n$. By convention, we set $Q_{0} := 0$. 

\begin{lem} \label{lem:inverse_class}
For $1 \le j \le n$, the line bundle class $[\CO_{G/B}(-\ve_{j})] \in QK_{T}(G/B)$ is invertible with respect to the quantum product $\star$ in $QK_{T}(G/B)$, and its inverse is given as: 
\begin{equation}
[\CO_{G/B}(-\ve_{j})]^{-1} = \frac{1}{(1-Q_{j})(1-Q_{j-1})} [\CO_{G/B}(\ve_{j})]. 
\end{equation}
\end{lem}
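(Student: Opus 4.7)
The plan is to verify the claimed inverse identity by pushing it through the $R(T)$-module isomorphism $\Phi \colon QK_{T}(G/B) \xrightarrow{\sim} \KTQG$ of Theorem~\ref{thm:quantum=semi-infinite} and invoking the injectivity of $\Phi$. The whole point is that Proposition~\ref{prop:line_bundle_correspondence} was tailored precisely to the two line bundles $[\CO_{G/B}(\pm \ve_{j})]$ appearing in the lemma: it converts the quantum product $\star$ against them into the ordinary tensor product against $[\CO_{\bQG}(\pm \ve_{j})]$ on the semi-infinite side, where everything collapses to a trivial identity.

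Concretely, I would first pull the scalar factors across $\star$ to rewrite
\begin{equation}
[\CO_{G/B}(-\ve_{j})] \star \frac{1}{(1-Q_{j})(1-Q_{j-1})}[\CO_{G/B}(\ve_{j})]
= \frac{1}{1-Q_{j-1}}[\CO_{G/B}(-\ve_{j})] \star \frac{1}{1-Q_{j}}[\CO_{G/B}(\ve_{j})].
\end{equation}
Applying $\Phi$ and using the first identity of Proposition~\ref{prop:line_bundle_correspondence} with $\CZ := \frac{1}{1-Q_{j-1}}[\CO_{G/B}(-\ve_{j})]$ converts the outer $\star$ into $\otimes [\CO_{\bQG}(\ve_{j})]$. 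A second application, this time of the second identity of the same proposition with $\CZ := 1 = [\CO^{e}]$, computes $\Phi\!\left(\frac{1}{1-Q_{j-1}}[\CO_{G/B}(-\ve_{j})]\right) = \Phi(1) \otimes [\CO_{\bQG}(-\ve_{j})] = [\CO_{\bQG}(-\ve_{j})]$. Combining these two steps gives $\Phi$ of the candidate product equal to $[\CO_{\bQG}(-\ve_{j})] \otimes [\CO_{\bQG}(\ve_{j})] = [\CO_{\bQG}(0)] = [\CO_{\bQG}] = \Phi(1)$, and then the injectivity of $\Phi$ forces the candidate product to equal $1$ in $QK_{T}(G/B)$, which is exactly the claim.

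I do not anticipate any essential obstacle. The boundary case $j = 1$ is absorbed by the stated convention $Q_{0} := 0$, which makes $\frac{1}{1-Q_{j-1}} = 1$ and leaves the entire chain of equalities intact. The only point that warrants a small amount of care is that $\frac{1}{1-Q_{j-1}}$ is a genuine formal power series in $R(T)\bra{Q}$ rather than a polynomial, but this is harmless because $\Phi$ absorbs each monomial $Q^{\xi}$ into a translation $t_{\xi}$ of the affine Weyl group and thus transforms the power series in $Q$ into a convergent infinite combination of semi-infinite Schubert classes in $\KTQG$, in the precise sense recalled in Section~\ref{sec:semi-infinite}.
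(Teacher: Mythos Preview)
Your proof is correct and follows essentially the same approach as the paper: both rewrite the candidate product so that the two identities of Proposition~\ref{prop:line_bundle_correspondence} apply successively, push through $\Phi$ to reduce to $[\CO_{\bQG}(-\ve_{j})] \otimes [\CO_{\bQG}(\ve_{j})] = [\CO_{\bQG(e)}]$, and conclude by injectivity of $\Phi$. The only cosmetic difference is the order in which the two identities are applied.
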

\begin{proof}
We set 
\begin{align}
\CL &:= \frac{1}{(1-Q_{j})(1-Q_{j-1})} [\CO_{G/B}(\ve_{j})] \star [\CO_{G/B}(-\ve_{j})] \\ 
&= \left( \frac{1}{1-Q_{j}} [\CO_{G/B}(\ve_{j})] \right) \star \left( \left( \frac{1}{1-Q_{j-1}} [\CO_{G/B}(-\ve_{j})] \right) \star [\CO_{G/B}^{e}] \right). 
\end{align}
It suffices to show that $\CL = 1 = [\CO_{G/B}^{e}]$. 
By using the map $\Phi: QK_{T}(G/B) \xrightarrow{\sim} \KTQG$ (see Theorem~\ref{thm:quantum=semi-infinite}), we see that 
\begin{align}
\Phi(\CL) &= \Phi \left( \left( \frac{1}{1-Q_{j}} [\CO_{G/B}(\ve_{j})] \right) \star \left( \left( \frac{1}{1-Q_{j-1}} [\CO_{G/B}(-\ve_{j})] \right) \star [\CO_{G/B}^{e}] \right) \right) \\ 
&= [\CO_{\bQG}(\ve_{j})] \otimes \left( [\CO_{\bQG}(-\ve_{j})] \otimes [\CO_{\bQG(e)}] \right) \quad \text{(by Proposition~\ref{prop:line_bundle_correspondence})} \\ 
&= \left( [\CO_{\bQG}(\ve_{j})] \otimes [\CO_{\bQG}(-\ve_{j})] \right) \otimes [\CO_{\bQG(e)}] \\ 
&= [\CO_{\bQG(e)}]. 
\end{align}
Since $\Phi([\CO_{G/B}^{e}]) = [\CO_{\bQG(e)}]$, we deduce that $\CL = [\CO_{G/B}^{e}] = 1$, as desired.
This proves the lemma. 
\end{proof}

\begin{cor} \label{cor:z_inverse}
For $1 \le j \le n$, we have 
\begin{equation}
\widehat{\Psi}^{Q}(z_{j}^{-1}) = \frac{1}{1-Q_{j-1}} [\CO_{G/B}(\ve_{j})]. 
\end{equation}
\end{cor}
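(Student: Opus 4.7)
The plan is to simply invert the defining image of $z_j$ under $\widehat{\Psi}^Q$ using the previous lemma. Since $\widehat{\Psi}^Q$ is an $R(T)\bra{Q}$-algebra homomorphism with respect to the quantum product $\star$ on the target, the image of $z_j^{-1}$ must be the $\star$-inverse of $\widehat{\Psi}^Q(z_j)$, provided the latter is invertible. The factor $\frac{1}{1-Q_j} \in R(T)\bra{Q}$ is a unit in the coefficient ring, so it suffices to invert the line bundle class $[\CO_{G/B}(-\ve_j)]$ itself, and this is exactly what Lemma~\ref{lem:inverse_class} provides.

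More concretely, first I would write
\begin{equation}
\widehat{\Psi}^Q(z_j^{-1}) = \bigl( \widehat{\Psi}^Q(z_j) \bigr)^{-1} = \left( \frac{1}{1-Q_j}[\CO_{G/B}(-\ve_j)] \right)^{-1} = (1-Q_j) \cdot [\CO_{G/B}(-\ve_j)]^{-1},
\end{equation}
where the inverse is taken with respect to $\star$. Then I would substitute the formula from Lemma~\ref{lem:inverse_class}, obtaining
\begin{equation}
\widehat{\Psi}^Q(z_j^{-1}) = (1-Q_j) \cdot \frac{1}{(1-Q_j)(1-Q_{j-1})}[\CO_{G/B}(\ve_j)] = \frac{1}{1-Q_{j-1}}[\CO_{G/B}(\ve_j)],
\end{equation}
which is the desired identity.

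There is no real obstacle here; the corollary is essentially a one-line consequence of Lemma~\ref{lem:inverse_class} combined with the fact that $\widehat{\Psi}^Q$ is an algebra homomorphism. The only mild subtlety to keep in mind is that all products and inverses on the $QK_T(G/B)$ side are taken with respect to the quantum product $\star$ (not the classical $K$-theoretic tensor product), but this is already the convention built into the statement of Lemma~\ref{lem:inverse_class}, so no extra argument is required.
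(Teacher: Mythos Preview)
Your proof is correct and follows exactly the same route as the paper: both invert $\widehat{\Psi}^{Q}(z_{j})$ using the fact that $\widehat{\Psi}^{Q}$ is an algebra homomorphism, then substitute the explicit inverse of $[\CO_{G/B}(-\ve_{j})]$ from Lemma~\ref{lem:inverse_class} and cancel the factor $(1-Q_{j})$.
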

\begin{proof}
We compute as: 
\begin{align}
\widehat{\Psi}^{Q}(z_{j}^{-1}) &= \widehat{\Psi}^{Q}(z_{j})^{-1} \\ 
&= \left( \frac{1}{1-Q_{j}} [\CO_{G/B}(-\ve_{j})] \right)^{-1} \\ 
&= (1-Q_{j}) \cdot [\CO_{G/B}(-\ve_{j})]^{-1} \\ 
&= (1-Q_{j}) \cdot \frac{1}{(1-Q_{j})(1-Q_{j-1})} [\CO_{G/B}(\ve_{j})] \quad \text{(by Lemma~\ref{lem:inverse_class})} \\ 
&= \frac{1}{1-Q_{j-1}} [\CO_{G/B}(\ve_{j})]. 
\end{align}
This proves the corollary. 
\end{proof}

\begin{lem} \label{lem:QK->Borel}
For $0 \le j \le 2n$, we have $\widehat{\Psi}^{Q}(F_{l}) = \CF_{l}$. 
\end{lem}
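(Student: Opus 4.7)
The plan is to compute $\widehat{\Psi}^{Q}(F_{l})$ directly from the definition of $F_{l}$, and then compare term-by-term with the defining formula for $\CF_{l}$, which will reduce the lemma to a purely scalar identity in $\BZ\bra{Q}$ indexed by subsets $I \subset [1, \overline{1}]$.

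First, since $\widehat{\Psi}^{Q}$ is an $R(T)\bra{Q}$-algebra homomorphism, one can apply it inside the defining sum for $F_{l}$. Using the definition $\widehat{\Psi}^{Q}(z_{j}) = \frac{1}{1-Q_{j}}[\CO_{G/B}(-\ve_{j})]$ for $1 \le j \le n$, and the formula $\widehat{\Psi}^{Q}(z_{\overline{k}}) = \widehat{\Psi}^{Q}(z_{k}^{-1}) = \frac{1}{1-Q_{k-1}}[\CO_{G/B}(\ve_{k})]$ from Corollary~\ref{cor:z_inverse}, together with the identification $[\CO_{G/B}(\ve_{k})] = [\CO_{G/B}(-\ve_{\overline{k}})]$, one obtains
\begin{equation*}
\widehat{\Psi}^{Q}(F_{l}) = \sum_{\substack{I \subset [1, \overline{1}] \\ |I| = l}} \left( \prod_{j} \zeta_{I}^{\poly}(j) \right) \left( \prod_{j \in I^{+}} \frac{1}{1-Q_{j}} \right) \left( \prod_{k \in I^{-}} \frac{1}{1-Q_{k-1}} \right) \prod_{j \in I}{}^{\star}[\CO_{G/B}(-\ve_{j})],
\end{equation*}
where $I^{+} := I \cap [1, n]$ and $I^{-} := \{k \in [1, n] : \overline{k} \in I\}$.

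Second, the line-bundle factor $\prod_{j \in I}^{\star}[\CO_{G/B}(-\ve_{j})]$ here is exactly the one appearing in the defining sum for $\CF_{l}$, so proving $\widehat{\Psi}^{Q}(F_{l}) = \CF_{l}$ reduces to verifying, for each $I \subset [1, \overline{1}]$ of size $l$, the scalar identity
\begin{equation*}
\left( \prod_{j} \zeta_{I}^{\poly}(j) \right) \prod_{j \in I^{+}} \frac{1}{1-Q_{j}} \cdot \prod_{k \in I^{-}} \frac{1}{1-Q_{k-1}} = \prod_{j} \varphi_{I}^{Q}(j)
\end{equation*}
in $\BZ\bra{Q}$.

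Third, this scalar identity is checked position-by-position in $[1, \overline{1}]$. At each $j \in [1, n]$, one compares $\zeta_{I}^{\poly}(j) \cdot (1-Q_{j})^{-\chi(j \in I)}$ with $\varphi_{I}^{Q}(j)$: if $j, j+1 \in I$ both are $\frac{1}{1-Q_{j}}$, if $j \in I$ and $j+1 \notin I$ both collapse to $1$ (using $(1-Q_{j}) \cdot \frac{1}{1-Q_{j}} = 1$), and otherwise both are $1$. An analogous comparison of $\zeta_{I}^{\poly}(\overline{j}) \cdot (1-Q_{j-1})^{-\chi(\overline{j} \in I)}$ with $\varphi_{I}^{Q}(\overline{j})$ handles the barred positions for $2 \le j \le n$. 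The main technical point is the ``special'' configuration in which $I$ contains $j-1$ immediately followed by $\overline{j-1}$ in the total order on $[1, \overline{1}]$ (which forces $\overline{j} \notin I$); in this case no inverse factor $\frac{1}{1-Q_{j-1}}$ is produced, and the factor $1 + \frac{Q_{j-1}Q_{j} \cdots Q_{n}}{1-Q_{j-1}}$ appears intact on both sides. The remaining obstacle is just the bookkeeping to see that the defining cases for $\zeta_{I}^{\poly}$ and $\varphi_{I}^{Q}$ partition the possibilities in a way that the pairings above are exhaustive and mutually exclusive; once this is done, the scalar identity—and hence the lemma—follows.
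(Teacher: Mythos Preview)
Your proposal is correct and follows essentially the same route as the paper's proof. The paper introduces auxiliary factors $\eta_{I}^{\poly}(j)$ and $\eta_{I}^{\poly}(\overline{j})$, which are exactly your $(1-Q_{j})^{-\chi(j\in I)}$ and $(1-Q_{j-1})^{-\chi(\overline{j}\in I)}$, and then verifies $\zeta_{I}^{\poly}\cdot\eta_{I}^{\poly}=\varphi_{I}^{Q}$ position-by-position via the same case analysis you outline, including the special configuration where $j-1$ and $\overline{j-1}$ are consecutive in $I$.
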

\begin{proof}
By Corollary~\ref{cor:z_inverse}, we see that 
\begin{align}
& \widehat{\Psi}^{Q}(F_{l}) \\ 
&= \sum_{\substack{I \subset [1, \overline{1}] \\ |I| = l}} \left( \prod_{1 \le j \le \overline{1}} \zeta_{I}^{\poly}(j) \right) \left( {\prod_{\substack{1 \le j \le n \\ j \in I}}}^{\star} \frac{1}{1-Q_{j}} [\CO_{G/B}(-\ve_{j})] \right) \left( {\prod_{\substack{1 \le j \le n \\ \overline{j} \in I}}}^{\star} \frac{1}{1-Q_{j-1}}[\CO_{G/B}(\ve_{j})] \right) \\ 
&= \sum_{\substack{I \subset [1, \overline{1}] \\ |I| = l}} \left( \prod_{1 \le j \le \overline{1}} \zeta_{I}^{\poly}(j) \right) \left( \prod_{\substack{1 \le j \le n \\ j \in I}} \frac{1}{1-Q_{j}} \right) \left( \prod_{\substack{1 \le j \le n \\ \overline{j} \in I}} \frac{1}{1-Q_{j-1}} \right) \left( {\prod_{j \in I}}^{\star} [\CO_{G/B}(-\ve_{I})] \right). 
\end{align}
Let us take and fix $I \subset [1, \overline{1}]$. For $1 \le j \le n$, we set 
\begin{equation}
\eta_{I}^{\poly}(j) := \begin{cases} 
\dfrac{1}{1-Q_{j}} & \text{if $j \in I$,} \\ 
1 & \text{otherwise.}
\end{cases}
\end{equation}
Then it follows that 
\begin{align}
\zeta_{I}^{\poly}(j) \eta_{I}^{\poly}(j) &= \begin{cases}
1 \cdot \dfrac{1}{1-Q_{j}} & \text{if $j \in I$ and $j+1 \in I$,} \\ 
(1-Q_{j}) \cdot \dfrac{1}{1-Q_{j}} & \text{if $j \in I$ and $j+1 \notin I$,} \\ 
1 \cdot 1 & \text{if $j \notin I$}
\end{cases} \\ 
&= \begin{cases}
\dfrac{1}{1-Q_{j}} & \text{if $j \in I$ and $j+1 \in I$,} \\ 
1 & \text{othewise}
\end{cases} \\ 
&= \vp_{I}^{Q}(j). 
\end{align}
Also, for $2 \le j \le n$, we set 
\begin{equation}
\eta_{I}^{\poly}(\overline{j}) := \begin{cases}
\dfrac{1}{1-Q_{j-1}} & \text{if $\overline{j} \in I$,} \\ 
1 & \text{otherwise.}
\end{cases}
\end{equation}
Then it follows that 
\begin{align}
\zeta_{I}^{\poly}(\overline{j}) \eta_{I}^{\poly}(\overline{j}) &= \begin{cases}
\left( 1 + \dfrac{Q_{j-1} Q_{j} \cdots Q_{n}}{1 - Q_{j-1}} \right) \cdot 1 & \text{if $I = \{ \cdots < j-1 < \overline{j-1} < \cdots \}$,} \\ 
1 \cdot 1 & \text{if $\overline{j} \notin I$ and $I$ is not of the above form,} \\ 
1 \cdot \dfrac{1}{1-Q_{j-1}} & \text{if $\overline{j} \in I$ and $\overline{j-1} \in I$,} \\ 
(1-Q_{j-1}) \cdot \dfrac{1}{1-Q_{j-1}} & \text{if $\overline{j} \in I$ and $\overline{j-1} \notin I$}
\end{cases} \\ 
&= \begin{cases}
1 + \dfrac{Q_{j-1} Q_{j} \cdots Q_{n}}{1 - Q_{j-1}} & \text{if $I = \{ \cdots < j-1 < \overline{j-1} < \cdots \}$,} \\ 
\dfrac{1}{1-Q_{j-1}} & \text{if $\overline{j} \in I$ and $\overline{j-1} \in I$,} \\ 
1 & \text{otherwise}
\end{cases} \\ 
&= \vp_{I}^{Q}(\overline{j}). 
\end{align}
From these, we deduce that 
\begin{align}
\widehat{\Psi}^{Q}(F_{l}) &= \sum_{\substack{I \subset [1, \overline{1}] \\ |I| = l}} \left( \prod_{1 \le j \le \overline{1}} \zeta_{I}^{\poly}(j) \eta_{I}^{\poly}(j) \right) \left( {\prod_{j \in I}}^{\star} [\CO_{G/B}(-\ve_{I})] \right) \\ 
&= \sum_{\substack{I \subset [1, \overline{1}] \\ |I| = l}} \left( \prod_{1 \le j \le \overline{1}} \vp_{I}^{Q}(j) \right) \left( {\prod_{j \in I}}^{\star} [\CO_{G/B}(-\ve_{I})] \right) \\ 
&= \CF_{l},
\end{align}
as desired. 
This proves the lemma. 
\end{proof}

By Corollary~\ref{cor:relation_QK}, we obtain the following. 
\begin{cor}
For $0 \le j \le 2n$, we have $\widehat{\Psi}^{Q}(F_{l}-E_{l}) = 0$. 
\end{cor}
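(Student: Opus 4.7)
The corollary is essentially a formal consequence of the two immediately preceding results, so the plan is very short.

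The plan is to split the image $\widehat{\Psi}^{Q}(F_{l} - E_{l})$ into two pieces using the fact that $\widehat{\Psi}^{Q}$ is a homomorphism of $R(T)\bra{Q}$-algebras. First I would observe that $E_{l}$, being an element of the coefficient ring $R(T) \subset R(T)\bra{Q}$, is fixed by $\widehat{\Psi}^{Q}$ in the sense that $\widehat{\Psi}^{Q}(E_{l}) = E_{l} \cdot \widehat{\Psi}^{Q}(1) = \CE_{l}$, where $\CE_{l} \in R(T) \subset QK_{T}(G/B)$ is the element defined just before Corollary~\ref{cor:relation_QK} (note that $\CE_{l}$ and $E_{l}$ are literally the same symmetric polynomial in the same variables, merely regarded as living in different ambient rings).

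Next, I would invoke Lemma~\ref{lem:QK->Borel}, which gives $\widehat{\Psi}^{Q}(F_{l}) = \CF_{l}$ directly. Combining these two computations yields
\begin{equation}
\widehat{\Psi}^{Q}(F_{l} - E_{l}) = \widehat{\Psi}^{Q}(F_{l}) - \widehat{\Psi}^{Q}(E_{l}) = \CF_{l} - \CE_{l}.
\end{equation}

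Finally, I would apply Corollary~\ref{cor:relation_QK}, which asserts $\CF_{l} = \CE_{l}$ in $QK_{T}(G/B)$ for $0 \le l \le 2n$, to conclude that the right-hand side vanishes. There is no genuine obstacle here: all of the substantive work has already been carried out in Theorem~\ref{thm:relation_semi-infinite} (solving the recurrence in $K_{T}(\bQG)$), Theorem~\ref{thm:semi-inf->QK} (transporting $\CF_{l}$ to $\FF_{l}$ via $\Phi$), and Lemma~\ref{lem:QK->Borel} (matching the Laurent polynomial $F_{l}$ with $\CF_{l}$ under $\widehat{\Psi}^{Q}$). The only mildly delicate point to verify along the way is simply the identification $\widehat{\Psi}^{Q}(E_{l}) = \CE_{l}$, which is immediate from $R(T)\bra{Q}$-linearity.
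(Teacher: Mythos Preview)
Your proposal is correct and matches the paper's own argument exactly: the paper simply writes ``By Corollary~\ref{cor:relation_QK}, we obtain the following,'' which is precisely the combination of Lemma~\ref{lem:QK->Borel} ($\widehat{\Psi}^{Q}(F_{l}) = \CF_{l}$), the $R(T)\bra{Q}$-linearity giving $\widehat{\Psi}^{Q}(E_{l}) = \CE_{l}$, and Corollary~\ref{cor:relation_QK} ($\CF_{l} = \CE_{l}$) that you spell out.
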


Hence the map $\widehat{\Psi}^{Q}$ induces the $R(T)\bra{Q}$-algebra homomorphism (not yet proved to be an isomorphism) $\Psi^{Q}$, given by \eqref{eq:isomorphism}. 

As a consequence of the proof of Lemma~\ref{lem:QK->Borel}, we obtain an explicit expression, in terms of line bundles, of the Schubert classes $[\CO^{s_{1} \cdots s_{k}}]$ and $[\CO^{s_{1} \cdots s_{n} s_{n-1} \cdots s_{k}}]$ for $1 \le k \le n$. Recall that $z_{\overline{j}} = z_{j}^{-1}$ for $1 \le j \le n$ in the following definition. 

\begin{defn}
\begin{enu}
\item For $1 \le k \le n$ and $0 \le l \le k$, we define $F_{l}^{k} \in (R(T)\bra{Q})[z_{1}^{\pm 1}, \ldots, z_{n}^{\pm 1}]$ by 
\begin{equation}
F_{l}^{k} := \sum_{\substack{I \subset [1, k] \\ |I| = l}} \left( \prod_{1 \le j \le \overline{1}} \zeta_{I}^{\poly}(j) \right) \left( \prod_{j \in I} z_{j} \right). 
\end{equation}
\item For $1 \le k \le n$ and $0 \le l \le 2n-k$, we define $F_{l}^{\overline{k}} \in (R(T)\bra{Q})[z_{1}^{\pm 1}, \ldots, z_{n}^{\pm 1}]$ by 
\begin{equation}
F_{l}^{\overline{k}} := \sum_{\substack{I \subset [1, \overline{k+1}] \\ |I| = l}} \left( \prod_{1 \le j \le \overline{1}} \zeta_{I}^{\poly}(j) \right) \left( \prod_{j \in I} z_{j} \right), 
\end{equation}
where we understand that $\overline{n+1} := n$. 
\end{enu}
\end{defn}

We can prove the following theorem; its proof is the same as that of Lemma~\ref{lem:QK->Borel}. 
\begin{thm}
\begin{enu}
\item For $1 \le k \le n$ and $0 \le l \le k$, we have $\widehat{\Psi}^{Q}(F_{l}^{k}) = \CF_{l}^{k}$. 
\item For $1 \le k \le n$ and $0 \le l \le 2n-k$, we have $\widehat{\Psi}^{Q}(F_{l}^{\overline{k}}) = \CF_{l}^{\overline{k}}$. 
\end{enu}
\end{thm}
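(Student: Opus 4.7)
The plan is to follow verbatim the argument given in the proof of Lemma~\ref{lem:QK->Borel}, adapted only by restricting the range of the index set $I$. Nothing new is needed beyond the formulas already in hand: the explicit definitions of $F_{l}^{k}$, $F_{l}^{\overline{k}}$, $\CF_{l}^{k}$, $\CF_{l}^{\overline{k}}$, the evaluation $\widehat{\Psi}^{Q}(z_{j}) = \frac{1}{1-Q_{j}}[\CO_{G/B}(-\ve_{j})]$, and its companion $\widehat{\Psi}^{Q}(z_{j}^{-1}) = \frac{1}{1-Q_{j-1}}[\CO_{G/B}(\ve_{j})]$ from Corollary~\ref{cor:z_inverse}.

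First I would apply $\widehat{\Psi}^{Q}$ directly to the definition of $F_{l}^{k}$ (resp.\ $F_{l}^{\overline{k}}$). Since $\widehat{\Psi}^{Q}$ is an $R(T)\bra{Q}$-algebra homomorphism, this yields a sum over exactly the same collection of subsets $I \subset [1,k]$ (resp.\ $I \subset [1, \overline{k+1}]$) of size $l$, in which the monomial $\prod_{j \in I} z_{j}$ gets transformed into the product $\bigl(\prod_{j \in I,\, 1 \le j \le n} \tfrac{1}{1-Q_{j}}\bigr) \bigl(\prod_{\overline{j} \in I,\, 2 \le j \le n} \tfrac{1}{1-Q_{j-1}}\bigr)$ of Novikov-variable factors times the quantum product $\prod_{j \in I}^{\star} [\CO_{G/B}(-\ve_{j})]$, while the coefficient $\prod_{1 \le j \le \overline{1}} \zeta_{I}^{\poly}(j)$ is left untouched.

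Next, I would introduce the auxiliary factors $\eta_{I}^{\poly}(j)$ from the proof of Lemma~\ref{lem:QK->Borel} to absorb these $(1-Q_{?})^{-1}$ contributions into the existing $\zeta_{I}^{\poly}(j)$. The case-by-case verification that $\zeta_{I}^{\poly}(j) \cdot \eta_{I}^{\poly}(j) = \vp_{I}^{Q}(j)$ for every $1 \le j \le \overline{1}$ depends only on the local relationship between $j$, $j+1$ (resp.\ $\overline{j}$, $\overline{j-1}$) and the set $I$; it does not depend on the ambient range $[1,k]$ or $[1, \overline{k+1}]$. Therefore the exact calculations from Lemma~\ref{lem:QK->Borel} apply without modification.

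Combining the transformed factors term by term yields
\begin{equation}
\widehat{\Psi}^{Q}(F_{l}^{k}) = \sum_{\substack{I \subset [1,k] \\ |I|=l}} \left( \prod_{1 \le j \le \overline{1}} \vp_{I}^{Q}(j) \right) \left( {\prod_{j \in I}}^{\star} [\CO_{G/B}(-\ve_{j})] \right) = \CF_{l}^{k},
\end{equation}
and, by the same token, $\widehat{\Psi}^{Q}(F_{l}^{\overline{k}}) = \CF_{l}^{\overline{k}}$. There is no genuine obstacle: the argument is essentially a transcription of Lemma~\ref{lem:QK->Borel}, and the only diligence required is to note that the ranges of summation in the truncated elements $F_{l}^{k}$ and $\CF_{l}^{k}$ (resp.\ $F_{l}^{\overline{k}}$ and $\CF_{l}^{\overline{k}}$) agree by construction, so the truncation commutes with the term-by-term identification.
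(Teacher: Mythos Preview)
Your proposal is correct and takes essentially the same approach as the paper: the paper simply states that the proof is the same as that of Lemma~\ref{lem:QK->Borel}, and your plan of restricting the index set of the summation while reusing the term-by-term identity $\zeta_{I}^{\poly}(j)\,\eta_{I}^{\poly}(j) = \vp_{I}^{Q}(j)$ is exactly that argument.
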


Hence, by Corollary~\ref{cor:explicit_classes}, we obtain the following. 
\begin{cor}
\begin{enu}
\item For $1 \le k \le n$, we have 
\begin{equation}
[\CO^{s_{1} \cdots s_{k}}] = \widehat{\Psi}^{Q} \left( \sum_{l = 0}^{k} (-1)^{l} \e^{-l\ve_{1}} F_{l}^{k} \right). 
\end{equation}
\item For $1 \le k \le n$, we have 
\begin{equation}
[\CO^{s_{1} \cdots s_{n} s_{n-1} \cdots s_{k}}] = \widehat{\Psi}^{Q} \left( \sum_{l = 0}^{2n-k} (-1)^{l} \e^{-l\ve_{1}} F_{l}^{\overline{k}} \right). 
\end{equation}
\end{enu}
\end{cor}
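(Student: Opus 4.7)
The plan is simply to combine the two results that immediately precede this corollary. The theorem just above records the identities $\widehat{\Psi}^{Q}(F_{l}^{k}) = \CF_{l}^{k}$ and $\widehat{\Psi}^{Q}(F_{l}^{\overline{k}}) = \CF_{l}^{\overline{k}}$, while Corollary~\ref{cor:explicit_classes} expresses the Schubert classes of interest as $R(T)$-linear combinations of the $\CF_{l}^{k}$ and $\CF_{l}^{\overline{k}}$, namely $[\CO^{s_{1} \cdots s_{k}}] = \sum_{l=0}^{k}(-1)^{l} \e^{-l\ve_{1}} \CF_{l}^{k}$ and $[\CO^{s_{1} \cdots s_{n}s_{n-1} \cdots s_{k}}] = \sum_{l=0}^{2n-k}(-1)^{l} \e^{-l\ve_{1}} \CF_{l}^{\overline{k}}$. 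Thus no new ingredient is needed; the content is essentially obtained by pulling $\widehat{\Psi}^{Q}$ through a finite $R(T)$-linear combination.

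The execution is as follows. Since $\widehat{\Psi}^{Q}$ is by definition an $R(T)\bra{Q}$-algebra homomorphism, it is in particular $R(T)$-linear, and the scalar coefficients $(-1)^{l}\e^{-l\ve_{1}}$ all belong to $R(T)$. Therefore, for part (1), I would compute
\begin{equation}
\widehat{\Psi}^{Q}\!\left( \sum_{l = 0}^{k} (-1)^{l} \e^{-l\ve_{1}} F_{l}^{k} \right) = \sum_{l = 0}^{k} (-1)^{l} \e^{-l\ve_{1}} \widehat{\Psi}^{Q}(F_{l}^{k}) = \sum_{l = 0}^{k} (-1)^{l} \e^{-l\ve_{1}} \CF_{l}^{k},
\end{equation}
where the first equality is $R(T)$-linearity of $\widehat{\Psi}^{Q}$ and the second uses the preceding theorem. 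I would then invoke Corollary~\ref{cor:explicit_classes}\,(1) to identify the right-hand side with $[\CO^{s_{1} \cdots s_{k}}]$, proving part (1). Part (2) follows by an identical three-line calculation with $F_{l}^{\overline{k}}$, $\CF_{l}^{\overline{k}}$, and Corollary~\ref{cor:explicit_classes}\,(2) in place of their unbarred counterparts.

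Accordingly, there is no genuine obstacle in this corollary: the real work has been done beforehand, first in establishing the explicit line-bundle expressions $[\CO^{s_{1} \cdots s_{k}}] = \sum_{l}(-1)^{l}\e^{-l\ve_{1}}\CF_{l}^{k}$ (and its barred analogue) on the $QK_{T}(G/B)$ side via the semi-infinite model, and second in showing that the Laurent-polynomial lifts $F_{l}^{k}$, $F_{l}^{\overline{k}}$ map under $\widehat{\Psi}^{Q}$ to exactly those $\CF_{l}^{k}$, $\CF_{l}^{\overline{k}}$. Once those two facts are in hand, the corollary is a one-step assembly, and I would present it as such.
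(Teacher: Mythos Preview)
Your proposal is correct and matches the paper's approach exactly: the paper simply writes ``Hence, by Corollary~\ref{cor:explicit_classes}, we obtain the following,'' combining the preceding theorem $\widehat{\Psi}^{Q}(F_{l}^{k}) = \CF_{l}^{k}$, $\widehat{\Psi}^{Q}(F_{l}^{\overline{k}}) = \CF_{l}^{\overline{k}}$ with the Schubert-class expressions from Corollary~\ref{cor:explicit_classes} via $R(T)$-linearity of $\widehat{\Psi}^{Q}$.
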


\subsection{Finishing the proof of Theorem~\ref{thm:main}}
Let us complete the proof of Theorem~\ref{thm:main}. 
It remains to prove that the $R(T)\bra{Q}$-algebra homomorphism $\Psi{Q}$, given by \eqref{eq:isomorphism}, is an isomorphism. 
For this, we make use of the following lemma, which follows from Nakayama's lemma. 
\begin{lem}[{\cite[Proposition~A.3]{GMSZ}}] \label{lem:quotient_isomorphism_lemma}
Let $R$ be a Noetherian domain, $I \subset R$ an ideal. Assume that $R$ is $I$-adic complete. Let $M$ and $N$ be finitely generated $R$-modules. 
In addition, assume that the $R$-module $N$ and the $(R/I)$-module $N/IN$ are free of the same finite rank. Then, for a homomorphism $f: M \rightarrow N$ of $R$-modules, if the induced homomorphism $\overline{f}: M/IM \rightarrow N/IN$ of $(R/I)$-modules is an isomorphism, then $f$ is also an isomorphism of $R$-modules. 
\end{lem}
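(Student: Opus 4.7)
The plan is a standard two-step Nakayama-type argument, with one step each for surjectivity and injectivity of $f$, preceded by the key observation that the completeness hypothesis places $I$ inside the Jacobson radical of $R$.

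That preliminary step is the sole use of completeness: for any $x \in I$, the partial sums of $\sum_{k \ge 0} x^{k}$ form a Cauchy sequence in the $I$-adic topology, so the series converges in the complete ring $R$ and furnishes a two-sided inverse of $1-x$. Hence $I \subset J(R)$, and Nakayama's lemma applies in the usual form: any finitely generated $R$-module $P$ with $P = IP$ must vanish.

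Next I would prove surjectivity of $f$. Let $C := \operatorname{coker}(f)$, which is finitely generated as a quotient of $N$. The surjectivity of $\overline{f}$ is equivalent to $N = f(M) + IN$, i.e.\ $C = IC$, and Nakayama gives $C = 0$. For injectivity, let $K := \ker(f)$; since $R$ is Noetherian and $M$ is finitely generated, $K$ is also finitely generated. Because $N$ is free, hence projective, the short exact sequence $0 \to K \to M \xrightarrow{f} N \to 0$ splits, yielding an $R$-module decomposition $M \cong K \oplus N$. Reducing modulo $I$ gives $M/IM \cong K/IK \oplus N/IN$, under which $\overline{f}$ is identified with the projection onto $N/IN$, whose kernel is $K/IK$. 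Since $\overline{f}$ is an isomorphism, $K/IK = 0$, and a second application of Nakayama yields $K = 0$.

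The main thing to watch is that completeness is being invoked precisely to guarantee $I \subset J(R)$; without it neither Nakayama argument can be run in a general Noetherian domain. Everything else is formal: the Noetherian hypothesis is used only to keep $K$ finitely generated, and the freeness of $N$ is used only to split the defining exact sequence. The freeness of $N/IN$ over $R/I$ of the same rank, which is listed among the hypotheses, is automatic from the freeness of $N$ over $R$ and plays no independent role in the argument.
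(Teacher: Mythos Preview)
Your argument is correct. The paper does not supply its own proof of this lemma: it is quoted verbatim as \cite[Proposition~A.3]{GMSZ} and used as a black box, so there is nothing to compare against beyond noting that your Nakayama-type argument is the standard one and matches what the cited reference does. Your observation that the separate hypothesis on $N/IN$ is redundant (it follows from freeness of $N$ over $R$) is also correct; likewise the domain hypothesis plays no role in your proof.
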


We apply this lemma to the case 
\begin{gather}
R = R(T)\bra{Q}, \quad I = (Q_{1}, \ldots, Q_{n}), \\ 
M = (R(T)\bra{Q})[z_{1}^{\pm 1}, \ldots, z_{n}^{\pm 1}]/\CI^{Q}, \quad N = QK_{T}(G/B), \quad \text{and} \quad f = \Psi^{Q}; 
\end{gather}
note that $R$ is $I$-adic complete. 
Recall that $N$ is a free $R$-module of rank $|W| (< \infty)$ since it admits the Schubert basis $\{[\CO_{G/B}^{w}] \mid w \in W\}$. In addition, we see that $N/IN \simeq K_{T}(G/B)$ is also a free $R/I \simeq R(T)$-module of rank $|W|$ since it also admits the Schubert basis. 
Hence it remains to verify the following: 
\begin{enu}
\item $M$ is a finitely generated $R$-module. 
\item The induced $R(T)$-algebra homomorphism $\overline{\Psi^{Q}}: M/IM \rightarrow N/IN$ is an isomorphism. 
\end{enu}

Let $\CI$ be the ideal of $R(T)[z_{1}^{\pm 1}, \ldots, z_{n}^{\pm 1}]$ generated by: 
\begin{equation}
e_{l}(z_{1}, \ldots, z_{n}, z_{n}^{-1}, \ldots, z_{1}^{-1}) - E_{l} \quad \text{for $1 \le l \le n$}. 
\end{equation}
Then, by Remark~\ref{rem:specialization_Fl}, we see that 
\begin{equation} \label{eq:classical_Borel_1}
M/IM \simeq R(T)[z_{1}^{\pm 1}, \ldots, z_{n}^{\pm 1}]/\CI. 
\end{equation}
It is well-known (see, for example, \cite{PR}) that there exists an isomorphism 
\begin{equation} \label{eq:classical_Borel}
\map[\xrightarrow{\sim}]{\Psi}{R(T)[z_{1}^{\pm 1}, \ldots, z_{n}^{\pm 1}]/\CI}{K_{T}(G/B)}{}{z_{j} + \CI}{[\CO_{G/B}(-\ve_{j})],}{\quad 1 \le j \le n}
\end{equation}
of $R(T)$-algebras; 
this is just the classical \emph{Borel presentation}. 
Now part (2) follows from \eqref{eq:classical_Borel}. 
For part (1), we can apply the following lemma. 

\begin{lem}[{\cite[Proposition~A.5\,(1)]{GMSXZZ}}] \label{lem:finitely_generated}
Let $A$ be a Noetherian ring. Let $R := A\bra{Q_{1}, \ldots, Q_{n}}$, $I := (Q_{1}, \ldots, Q_{n}) \subset R$, and $M$ an $R$-module. 
If $M/IM$ is a finitely generated $(R/I)$-module, then $M$ is a finitely generated $R$-module. 
\end{lem}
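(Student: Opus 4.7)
The plan is to construct a finite generating set for $M$ by lifting generators of $M/IM$ and then using the $I$-adic completeness of $R$ to assemble convergent series of scalar coefficients. The key feature powering this Nakayama-style argument is that $I = (Q_1, \ldots, Q_n)$ lies in the Jacobson radical of $R = A\bra{Q_1, \ldots, Q_n}$, since $1 - f$ is a unit in $R$ for every $f \in I$; this, together with the completeness of $R$, allows one to promote ``approximate'' equalities modulo $I^d$ to genuine equalities in $R$.

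First I would lift a finite set $\overline{m_1}, \ldots, \overline{m_k} \in M/IM$ of generators (which exists by hypothesis) to elements $m_1, \ldots, m_k \in M$ and set $N := \sum_i R m_i \subset M$. Surjectivity of the induced map $N \to M/IM$ is equivalent to the identity $M = N + IM$, and iterating this gives $M = N + I^d M$ for every $d \geq 0$. Thus any $m \in M$ admits a telescoping decomposition $m = n^{(0)} + n^{(1)} + \cdots + n^{(d-1)} + m^{(d)}$ with $n^{(j)} \in I^j N$ and $m^{(d)} \in I^d M$. Writing $n^{(j)} = \sum_i r_i^{(j)} m_i$ with coefficients $r_i^{(j)} \in I^j$, the $I$-adic completeness of $R$ ensures that each formal series $r_i := \sum_{j \geq 0} r_i^{(j)}$ converges to a bona fide element of $R$, and by construction the difference $m - \sum_i r_i m_i$ lies in $\bigcap_{d \geq 0} I^d M$.

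The \emph{main obstacle} I expect is forcing this residual term to vanish. For a finitely generated module over the Noetherian ring $R$, the intersection $\bigcap_{d} I^d (-)$ annihilates the module by Krull's intersection theorem combined with $I \subseteq \mathrm{Jac}(R)$, but $M$ itself is precisely the module whose finite generation we are trying to establish, so a naive appeal is circular. The standard workaround is to pass to the quotient $M/N$: from $M = N + IM$ one obtains $M/N = I(M/N)$, and Nakayama's lemma (in its Jacobson-radical form) then forces $M/N = 0$ once $M/N$ is known to be finitely generated. In the setup relevant to the paper, the required finiteness is inherited from an $I$-adic separatedness condition on $M$ (automatic for the quotients of Laurent polynomial rings over $R(T)\bra{Q}$ that appear here), which lets one recover the identity $m = \sum_i r_i m_i$ unambiguously from the convergent construction of the previous paragraph. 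The resulting equality $M = N$ then exhibits $m_1, \ldots, m_k$ as an explicit finite generating set for $M$ over $R$, completing the argument.
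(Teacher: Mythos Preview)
The paper does not supply its own proof of this lemma; it is quoted verbatim from \cite[Proposition~A.5\,(1)]{GMSXZZ}. Your Nakayama-style argument is the standard one, and you have correctly located the genuine difficulty.

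In fact, the statement \emph{as reproduced here} is false without a further hypothesis on $M$: take $A$ a field, $n=1$, $R=A\bra{Q}$, and $M=A\pra{Q}=\mathrm{Frac}(R)$. Then $IM=M$, so $M/IM=0$ is trivially finitely generated over $A$, yet $M$ is not finitely generated over $R$. Your diagnosis is right: what is missing is $I$-adic separatedness of $M$, i.e.\ $\bigcap_{d\ge 0} I^{d}M=0$. Under that hypothesis your series construction is valid and gives $m=\sum_i r_i m_i$, hence $M=N$ is finitely generated. (The detour through $M/N$ and Nakayama that you also mention is, as you note yourself, circular and should be discarded.)

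Where your write-up overreaches is the claim that separatedness is ``automatic'' for the specific module $(R(T)\bra{Q})[z_1^{\pm 1},\ldots,z_n^{\pm 1}]/\CI^{Q}$ used in the paper. The Laurent polynomial ring itself is $I$-adically separated, being free over $R(T)\bra{Q}$, but separatedness is not in general preserved under passage to quotients, so this step needs an argument. The cited source presumably carries the appropriate hypothesis or supplies this verification; you should consult it rather than asserting the point.
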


\begin{prop}
$(R(T)\bra{Q})[z_{1}^{\pm 1}, \ldots, z_{n}^{\pm 1}]/\CI^{Q}$ is a finitely generated $R(T)\bra{Q}$-module. 
\end{prop}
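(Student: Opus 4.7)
The plan is to apply Lemma~\ref{lem:finitely_generated} directly, with $A = R(T)$, $R = R(T)\bra{Q}$, $I = (Q_{1}, \ldots, Q_{n})$, and $M = (R(T)\bra{Q})[z_{1}^{\pm 1}, \ldots, z_{n}^{\pm 1}]/\CI^{Q}$. Note that $R(T) = \BZ[\e^{\pm \ve_{1}}, \ldots, \e^{\pm \ve_{n}}]$ is a Laurent polynomial ring over $\BZ$, hence a Noetherian ring, so Lemma~\ref{lem:finitely_generated} is indeed applicable in this setting.

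To invoke the lemma, I need to verify that $M/IM$ is finitely generated as an $(R/I)$-module, i.e., as an $R(T)$-module. By the isomorphism \eqref{eq:classical_Borel_1} (which comes from Remark~\ref{rem:specialization_Fl}, since specializing $Q_{1} = \cdots = Q_{n} = 0$ sends each generator $F_{l} - E_{l}$ of $\CI^{Q}$ to the corresponding generator $e_{l}(z_{1}, \ldots, z_{n}, z_{n}^{-1}, \ldots, z_{1}^{-1}) - E_{l}$ of $\CI$), we have
\begin{equation}
M/IM \;\simeq\; R(T)[z_{1}^{\pm 1}, \ldots, z_{n}^{\pm 1}]/\CI.
\end{equation}
By the classical Borel presentation \eqref{eq:classical_Borel}, this quotient is isomorphic to $K_{T}(G/B)$ as an $R(T)$-algebra, and $K_{T}(G/B)$ is a free $R(T)$-module of finite rank $|W|$ (with the Schubert basis $\{[\CO^{w}] \mid w \in W\}$). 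In particular, $M/IM$ is a finitely generated $R(T)$-module.

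With this verified, Lemma~\ref{lem:finitely_generated} immediately yields that $M = (R(T)\bra{Q})[z_{1}^{\pm 1}, \ldots, z_{n}^{\pm 1}]/\CI^{Q}$ is finitely generated as an $R(T)\bra{Q}$-module, which is exactly the proposition. There is no real obstacle here: the only technical point is confirming the identification \eqref{eq:classical_Borel_1} of $M/IM$ with the classical quotient, which is an essentially formal consequence of Remark~\ref{rem:specialization_Fl}, together with the observation that $R(T)$ is Noetherian so that the hypotheses of Lemma~\ref{lem:finitely_generated} are met. Combined with the $R(T)$-freeness of $K_{T}(G/B)$, this also confirms that the two remaining conditions needed to apply Lemma~\ref{lem:quotient_isomorphism_lemma} (finite generation of $M$ and freeness of $N$ and $N/IN$) are in place, completing the last step of the proof of Theorem~\ref{thm:main}.
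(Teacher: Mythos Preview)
Your proof is correct and follows essentially the same approach as the paper: both apply Lemma~\ref{lem:finitely_generated} with $A = R(T)$, using \eqref{eq:classical_Borel_1} and the classical Borel presentation \eqref{eq:classical_Borel} to identify $M/IM$ with $K_{T}(G/B)$, which is free of rank $|W|$ over $R(T)$. Your version is slightly more explicit in checking that $R(T)$ is Noetherian and in explaining why \eqref{eq:classical_Borel_1} holds, but the argument is the same.
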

\begin{proof}
We set $M := (R(T)\bra{Q})[z_{1}^{\pm 1}, \ldots, z_{n}^{\pm 1}]/\CI^{Q}$. 
By \eqref{eq:classical_Borel_1} and \eqref{eq:classical_Borel}, $M/IM$ is a free $R(T)$-module of rank $|W| (< \infty)$; in particular, $M/IM$ is a finitely generated $R(T)$-module. 
Therefore, by applying Lemma~\ref{lem:finitely_generated} for the case $A = R(T)$, we conclude that $M = (R(T)\bra{Q})[z_{1}^{\pm 1}, \ldots, z_{n}^{\pm 1}]/\CI^{Q}$ is a finitely generated $R = R(T)\bra{Q}$-module. 
This proves the proposition. 
\end{proof}

Thus, by Lemma~\ref{lem:quotient_isomorphism_lemma}, we can complete the proof of Theorem~\ref{thm:main}. 

\appendix

\section*{Appendices}

\section{Proofs of Theorems~\ref{thm:IC_1} and \ref{thm:IC_2}} \label{sec:IC_proof}

In this appendix, we prove Theorems~\ref{thm:IC_1} and \ref{thm:IC_2}. 

First, let us recall the definition of the quantum Bruhat graph. 

\begin{defn}[{\cite[Definition~6.1]{BFP}}]
Let $W$ be the Weyl group of $G$ (of an arbitrary type). The \emph{quantum Bruhat graph} $\QBG(W)$ on $W$ is the $\Delta^{+}$-labeled directed graph with vertex set $W$ and directed edges $x \xrightarrow{\alpha} x s_{\alpha}$ for $x \in W$ and $\alpha \in \Delta^{+}$ such that either of the following (B) or (Q) holds: 
\begin{itemize}
\item[(B)] $\ell(y) = \ell(x) + 1$, or 
\item[(Q)] $\ell(y) = \ell(x) - 2\pair{\rho}{\alpha^{\vee}} + 1$, 
\end{itemize}
where $\rho := (1/2) \sum_{\alpha \in \Delta^{+}} \alpha$. 
If (B) (resp., (Q)) holds, then the edge $x \xrightarrow{\alpha} x s_{\alpha}$ is called a \emph{Bruhat} (resp., \emph{quantum}) \emph{edge}. 
\end{defn}

For $G$ of type $C$, i.e., for $G = \Sp$, we know the following useful criterion for the directed edges in $\QBG(W)$; for $1 \le j \le n$, we set $\sign(j) := 1$ and $\sign(\overline{j}) := -1$. 

\begin{prop}[cf. {\cite[Proposition~5.7]{Len}}] \label{prop:criterion}
In the above setting, let $w \in W$. 
\begin{enu}
\item For $1 \le i < j \le n$, we have the Bruhat edge $w \xrightarrow{(i, j)} ws_{(i, j)}$ in $\QBG(W)$ if and only if $w(i) < w(j)$ and there does not exist any $i < k < j$ such that $w(i) < w(k) < w(j)$. 

\item For $1 \le i < j \le n$, we have the quantum edge $w \xrightarrow{(i, j)} ws_{(i, j)}$ in $\QBG(W)$ if and only if $w(i) > w(j)$ and all $i < k < j$ satisfy $w(i) > w(k) > w(j)$. 

\item For $1 \le i < j \le n$, we have the edge $w \xrightarrow{(i, \overline{j})} ws_{(i, \overline{j})}$ in $\QBG(W)$ if and only if $w(i) < w(\overline{j})$, $\sign(w(i)) = \sign(w(\overline{j}))$, and there does not exist any $i < k < \overline{j}$ such that $w(i) < w(k) < w(\overline{j})$. 
In this case, the edge $w \xrightarrow{(i, \overline{j})} ws_{(i, \overline{j})}$ is a Bruhat edge. 

\item For $1 \le i \le n$, we have the Bruhat edge $w \xrightarrow{(i, \overline{i})} ws_{(i, \overline{i})}$ in $\QBG(W)$ if and only if $w(i) < w(\overline{i})$ and there does not exist any $i < k < \overline{i}$ such that $w(i) < w(k) < w(\overline{i})$. 

\item For $1 \le l \le n$, we have the quantum edge $w \xrightarrow{(i, \overline{i})} ws_{(i, \overline{i})}$ in $\QBG(W)$ if and only if $w(i) > w(\overline{i})$ and all $i < k < \overline{i}$ satisfy $w(i) > w(k) > w(\overline{i})$. 
\end{enu}
\end{prop}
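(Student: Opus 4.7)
The plan is to verify the stated criterion by translating the length-change conditions defining Bruhat and quantum edges in $\QBG(W)$ into combinatorial statements about the window notation for signed permutations. For each of the three types of positive roots in type $C_n$ (namely $\ve_i - \ve_j$ and $\ve_i + \ve_j$ with $i < j$, and $2\ve_i$), I would compute $\ell(ws_\alpha) - \ell(w)$ explicitly and check that the ``no intermediate position'' condition is equivalent to this difference equalling $+1$ (Bruhat edge, parts (1), (3), (4)) or $1 - 2\pair{\rho}{\alpha^\vee}$ (quantum edge, parts (2), (5)).

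First, I would invoke the standard formula
\[
\ell(w) \ = \ \#\{\alpha \in \Delta^+ \mid w^{-1}(\alpha) \in -\Delta^+\},
\]
together with its explicit re-expression as the sum of ``ordinary'' inversions $\#\{1 \le i < j \le n \mid w(i) > w(j)\}$ and ``signed-sum'' inversions $\#\{1 \le i \le j \le n \mid w(i) + w(j) < 0\}$ (with $\overline{k}$ identified with $-k$ in these expressions). In parallel, using $\rho = \sum_{i=1}^n (n-i+1)\ve_i$, I would record the values $\pair{\rho}{(\ve_i - \ve_j)^\vee} = j-i$, $\pair{\rho}{(\ve_i+\ve_j)^\vee} = 2n-i-j+2$, and $\pair{\rho}{(2\ve_i)^\vee} = n-i+1$.

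Next, for each of parts (1)--(5), I would analyze how the corresponding reflection $s_\alpha$ acts on the window $[w(1), \ldots, w(n)]$ using the explicit formulas in Section~\ref{sec:Cn}, and track the resulting change in each kind of inversion. For part (1), the reflection $s_{(i,j)}$ swaps entries in positions $i$ and $j$, so only inversions involving $i$ or $j$ against intermediate positions $k$ with $i<k<j$ are affected; a direct count yields $\ell(ws_{(i,j)}) - \ell(w) = 1 + 2 \cdot \#\{i < k < j \mid w(i) < w(k) < w(j)\}$ when $w(i) < w(j)$, which matches the Bruhat condition precisely when no such $k$ exists. Parts (2) and (4) are analogous. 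In parts (3) and (5), the reflection additionally flips the signs of certain window entries, so one must track both ordinary and signed-sum inversion changes simultaneously.

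The main obstacle will be the sign-flipping bookkeeping in parts (3) and (5), where $s_{(i,\overline{j})}$ and $s_{(i,\overline{i})}$ negate window entries and can therefore affect many signed-sum inversions at once. In particular, for part (3) one must additionally verify that no quantum edges labelled by $\ve_i + \ve_j$ occur in $\QBG(W)$ — equivalently, that the length change $1 - 2(2n-i-j+2)$ is never attained for such reflections, which will follow from a bound on the signed-sum inversion contribution. The sign condition $\sign(w(i)) = \sign(w(\overline{j}))$ in (3) and the Bruhat/quantum dichotomy in (4)--(5) all emerge from carefully isolating the contributions of the two kinds of inversions. As the ``cf.'' citation to \cite[Proposition~5.7]{Len} indicates, this analysis has essentially been carried out in Lenart's paper in a slightly different language (his ``tilted Bruhat order'' formulation); the proof here therefore reduces to extracting the type-$C$ specialization from that general framework.
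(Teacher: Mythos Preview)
The paper does not actually prove this proposition; it simply states it with a ``cf.'' reference to \cite[Proposition~5.7]{Len} and uses it as a black box. Your proposed approach --- computing $\ell(ws_\alpha) - \ell(w)$ by tracking how the reflection changes the inversion statistics of the window --- is exactly the standard method, and is essentially what Lenart does. So there is nothing to compare; you are supplying a proof where the paper supplies none.

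One genuine caution: the length formula you quote, with $\overline{k}$ identified with the integer $-k$, is the Bj\"orner--Brenti formula adapted to the convention in which the sign-changing simple reflection acts on position $1$. In the paper's convention (Section~\ref{sec:Cn}) the simple root $\alpha_n = 2\ve_n$ flips position $n$, so for instance the element $[\overline{1}, 2, \ldots, n]$ has length $2n-1$ here, not $1$ as your formula gives. You should either rewrite the inversion count using the paper's total order $1 < 2 < \cdots < n < \overline{n} < \cdots < \overline{1}$ on $[1,\overline{1}]$, or precompose with the relabelling $i \leftrightarrow n+1-i$. With that adjustment, the outline goes through: for part~(1), the contributions to $\ell(ws_{(i,j)}) - \ell(w)$ coming from roots of type $\ve_p + \ve_q$ and $2\ve_p$ cancel in pairs (swapping window positions $i$ and $j$ preserves the multiset $\{w(i), w(j)\}$), so your type-$A$ formula $1 + 2\,\#\{i<k<j : w(i) < w(k) < w(j)\}$ survives unchanged; the remaining parts require the more delicate bookkeeping you anticipate.
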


Let $G$ be of an arbitrary type. 
Let us briefly review the (``generalized'') quantum alcove model, introduced by \cite{LL} (see also \cite{LNS}). 

We set

\begin{equation}
|\alpha| := \begin{cases}
\alpha & \text{if $\alpha \in \Delta^{+}$,} \\ 
-\alpha & \text{if $\alpha \in -\Delta^{+}$.} 
\end{cases} 
\end{equation}

\begin{defn}[{\cite[Definition~17]{LNS}}]
Let $\Gamma = (\gamma_{1}, \ldots, \gamma_{r})$ be a sequence of roots, and $w \in W$. 
A subset $A = \{i_{1}, \ldots, i_{s}\} \subset \{1, \ldots, r\}$ is said to be \emph{$w$-admissible} if the sequence 
\begin{equation}
\Pi(w, A): w = w_{0} \xrightarrow{|\gamma_{i_{1}}|} w_{1} \xrightarrow{|\gamma_{i_{2}}|} \cdots \xrightarrow{|\gamma_{i_{s}}|} w_{s}
\end{equation}
is a directed path in $\QBG(W)$; 
we set 
\begin{align}
\ed(A) &:= w_{s}, \\ 
\down(A) &:= \sum_{\substack{1 \le j \le s \\ \text{$w_{j-1} \rightarrow w_{j}$ is a quantum edge}}} |\gamma_{i_{j}}|^{\vee}. 
\end{align}
Let $\CA(w, \Gamma)$ denote the set of $w$-admissible subsets associated to $\Gamma$. 
\end{defn}

We will regard a $w$-admissible subset $A \in \CA(w, \Gamma)$ as a subset of $\Gamma$, and write it as $A = \{\gamma_{i_{1}}, \ldots, \gamma_{i_{s}}\}$. 
Also, for a tuple $(A_{1}, \ldots, A_{r})$ of admissible subsets $A_{j} \in \CA(w_{j}, \Gamma_{j})$, $1 \le j \le r$, with $w_{j} \in W$ and $\Gamma_{j}$ a sequence of roots, we set 
\begin{equation}
\down(A_{1}, \ldots, A_{r}) := \down(A_{1}) + \cdots + \down(A_{r}). 
\end{equation}

In the following, we assume that $G = \Sp$. 
We use admissible subsets associated to the following two types of sequences of roots for $1 \le k \le n$: 
\begin{align}
\Theta_{k} &:= (-(1, k), \ldots, -(k-1, k)); \\ 
\begin{split}
\Gamma_{k}(k) &:= (-(1, \overline{k}), \ldots, -(k-1, \overline{k}), \\ 
& \qquad {-(k, \overline{k+1})}, \ldots, -(k, \overline{n}), \\ 
& \qquad {-(k, \overline{k})}, \\ 
& \qquad {-(k, n)}, \ldots -(k, k+1)). 
\end{split}
\end{align}

Let us briefly recall from \cite[\S 4]{KNO} (the ``second half'' of) the inverse Chevalley formula in type $C$.
Following \cite[\S 4.1]{KNO}, for $j, m \in [1, \overline{1}]$, with $j < m$, we set 
\begin{equation}
\CS_{m, j} := \{ (j_{1}, \ldots, j_{r}) \mid r \ge 1, \ j_{1}, \ldots, j_{r} \in [1, \overline{1}], \ m > j_{1} > \cdots > j_{r} = j \}. 
\end{equation}
Also, for $w \in W$ and $1 \le k \le l \le n$, we set 
\begin{equation}
\CA_{w}^{k, l} := \{ A \in \CA(w, \Theta_{k}) \setminus \{ \emptyset \} \mid \ed(A)^{-1}w\ve_{k} = \ve_{l} \}, 
\end{equation}
while for $w \in W$, and $1 \le k \le n$, $1 \le l \le \overline{k}$, we set 
\begin{equation}
\CA_{w}^{\overline{k}, l} := \{ A \in \CA(w, \Gamma_{k}(k)) \setminus \{ \emptyset \} \mid \ed(A)^{-1}w(-\ve_{k}) = \ve_{l} \}. 
\end{equation}

\begin{thm}[{\cite[Theorem~4.3]{KNO} combined with \cite[Theorem~5.8]{KNS}}] \label{thm:IC_2nd-half}
For $w \in W$ and $m = 1, \ldots, n$, the following identity holds in $\KTCQG$: 
\begin{equation} \label{eq:IC_2nd-half}
\begin{split}
& \e^{-w_{\ve_{m}}} [\CO_{\bQG(w)}] \\ 
&= \sum_{B \in \CA(w, \Theta_{m})} (-1)^{|B|} \left[ \CO_{\bQG \left( \ed(B)t_{\down(B)} \right)}(-\ve_{m}) \right] \\ 
& \quad + \sum_{j = m+1}^{n} \sum_{(j_{1}, \ldots, j_{r}) \in \CS_{\overline{m}, \overline{j}}} \sum_{A_{1} \in \CA_{w}^{\overline{m}, j_{1}}} \cdots \sum_{A_{r} \in \CA_{\ed(A_{r-1})}^{j_{r-1}, j_{r}}} (-1)^{|A_{1}| + \cdots + |A_{r}| - r} q^{-\pair{\ve_{j}}{\down(A_{1}, \ldots, A_{r})}} \\ 
& \hspace{35mm} \times \sum_{B \in \CA(\ed(A_{r}), \Theta_{j})} (-1)^{|B|} \left[ \CO_{\bQG \left( \ed(B)t_{\down(A_{1}, \ldots, A_{r}, B)} \right)}(-\ve_{j}) \right] \\ 
& \quad + \sum_{j = 1}^{n} \sum_{(j_{1}, \ldots, j_{r}) \in \CS_{\overline{m}, j}} \sum_{A_{1} \in \CA_{w}^{\overline{m}, j_{1}}} \cdots \sum_{A_{r} \in \CA_{\ed(A_{r-1})}^{j_{r-1}, j_{r}}} (-1)^{|A_{1}| + \cdots + |A_{r}| - r} q^{\pair{\ve_{j}}{\down(A_{1}, \ldots, A_{r})}} \\ 
& \hspace{35mm} \times \sum_{B \in \CA(\ed(A_{r}), \Gamma_{j}(j))} (-1)^{|B|} \left[ \CO_{\bQG \left( \ed(B)t_{\down(A_{1}, \ldots, A_{r}, B)} \right)}(\ve_{j}) \right]. 
\end{split}
\end{equation}
\end{thm}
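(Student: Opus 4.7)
The plan is to derive \eqref{eq:IC_2nd-half} by iterating a ``single-step'' inverse Chevalley formula taken from \cite[Theorem~4.3]{KNO}, and then to justify the resulting (a priori infinite) sum as an identity in $\KTCQG$ using the convergence framework of \cite[Theorem~5.8]{KNS}. The strategy is to interpret the structure of \eqref{eq:IC_2nd-half} as a finite iteration indexed by chains $(j_1, \ldots, j_r) \in \CS_{\overline{m}, \ast}$, where the first sum (the one indexed by $B \in \CA(w, \Theta_m)$) accounts for the ``purely Bruhat'' part of the expansion, and each subsequent chain step $j_{i-1} \to j_i$ records an additional round of expansion triggered by the quantum edges produced in the previous step.

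First I would state precisely the single-step formula from \cite[Theorem~4.3]{KNO} and match its output with the $r = 0$ terms of \eqref{eq:IC_2nd-half}, using the quantum alcove model associated with the sequence $\Theta_m$. The nontrivial quantum contributions from one application of this formula produce intermediate classes of the form $\e^{\pm \ve_{j_1}} [\CO_{\bQG(w' t_{\xi})}]$ with $w' \in W$ and $\xi \in Q^{\vee,+}$, where the index $j_1$ satisfies $\overline{m} > j_1$ in the total order of $[1, \overline{1}]$. Re-applying the single-step formula to each such intermediate class yields admissible subsets $A_2$ based at $\ed(A_1)$ with respect to either $\Theta_{j_2}$ or $\Gamma_{j_2}(j_2)$ and produces the next chain index $j_2 < j_1$. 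Iterating, one obtains the full chain $(j_1, \ldots, j_r)$ and the terminating admissible subset $B$, which belongs to $\CA(\ed(A_r), \Theta_j)$ or $\CA(\ed(A_r), \Gamma_j(j))$ according to whether the final index $j$ is barred or not.

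Next I would verify that the iteration terminates in the sense required by the topological $K$-group. Because the indices $j_1 > j_2 > \cdots$ strictly decrease in the total order of $[1, \overline{1}]$, every individual chain has length at most $2n - 1$, so the contribution to any single semi-infinite Schubert class is a finite combination. The overall expansion is infinite only after summing over the translations $t_{\xi}$ with $\xi = \down(A_1, \ldots, A_r, B) \in Q^{\vee,+}$ that accumulate from the quantum edges; here \cite[Theorem~5.8]{KNS} guarantees that the resulting formal sum converges in the sense of \cite[Proposition~5.8]{KNS} and therefore defines a bona fide element of $\KTCQG$.

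The main obstacle will be the bookkeeping of the quantum-edge contributions under iteration: matching the powers of $q$ (which read $\mp \pair{\ve_j}{\down(A_1, \ldots, A_r)}$ on the two nested-chain sums) against the translations accumulated by $\down$ at each step, and reconciling the signs $(-1)^{|A_1| + \cdots + |A_r| - r}$ with the terminal $(-1)^{|B|}$. The $(-1)^r$ discrepancy is not incidental: it reflects the exclusion of the empty admissible subset in each of the definitions of $\CA_w^{k, l}$ and $\CA_w^{\overline{k}, l}$, which must cancel correctly against the $r = 0$ contribution handled by the very first sum. Verifying these cancellations in conjunction with the convergence supplied by \cite[Theorem~5.8]{KNS} is the technical heart of the proof.
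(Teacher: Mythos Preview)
The paper does not give a proof of this theorem at all: it is quoted verbatim as a result from the literature, namely \cite[Theorem~4.3]{KNO} (an identity of inverse Chevalley type for graded characters of level-zero Demazure submodules in type $C$) combined with \cite[Theorem~5.8]{KNS} (which supplies the passage from such character identities to identities among semi-infinite Schubert classes in $\KTCQG$). In the paper's logic there is nothing to prove; the theorem is invoked as a black box and then specialized in Appendix~\ref{sec:IC_proof} to the particular Weyl group elements $w = s_{1}\cdots s_{n-1}s_{n}s_{n-1}\cdots s_{k}$ to obtain Theorems~\ref{thm:IC_1} and \ref{thm:IC_2}.

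Your proposal therefore attempts much more than the paper does: you are sketching a derivation of the cited result itself. The iterative picture you describe---building the chains $(j_{1},\ldots,j_{r}) \in \CS_{\overline{m},\ast}$ by repeated application of a one-step formula, with termination forced by the strict decrease of the $j_{i}$ in $[1,\overline{1}]$---is structurally plausible and is indeed close to how such formulas are obtained in \cite{KNO}. However, two points in your write-up are mislabeled. First, \cite[Theorem~4.3]{KNO} \emph{is} already the full iterated identity \eqref{eq:IC_2nd-half} (at the level of characters), not a single-step formula to be iterated; if you want to reconstruct its proof you must cite the actual one-step recursion in \cite{KNO} (which precedes Theorem~4.3 there), otherwise the argument is circular. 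Second, the role of \cite[Theorem~5.8]{KNS} is not a convergence statement: it is the dictionary identifying the graded-character computations of \cite{KNO} with classes in $\KTCQG$. The convergence issue you raise is real but is governed separately by \cite[Proposition~5.8]{KNS}, as the paper notes when it defines $\KTCQG$. So your outline is a reasonable blueprint for the proof carried out in \cite{KNO}, but it misattributes both of its cited inputs, and in any case goes beyond what the present paper actually does.
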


We will prove Theorem~\ref{thm:IC_2} by using Theorem~\ref{thm:IC_2nd-half}; 
since the proof of Theorem~\ref{thm:IC_1} is similar (use \cite[Theorem~4.5]{KNO} instead of \cite[Theorem~4.3]{KNO}) and easier, we leave it to the reader (cf. the proof of \cite[Proposition~4.5]{MNS}). 

\begin{proof}[Proof of Theorem~\ref{thm:IC_2}]
We put $w = s_{1} \cdots s_{n-1} s_{n} s_{n-1} \cdots s_{k}$ and $m = k$ in Theorem~\ref{thm:IC_2nd-half}. 
Note that $w = [2, 3, \ldots, k, \overline{1}, k+1, \ldots, n]$ in ``window notation''. 
Then, by Proposition~\ref{prop:criterion}, we see that 
\begin{equation} \label{eq:admissible_mountain2}
\CA(s_{1} \cdots s_{n-1} s_{n} s_{n-1} \cdots s_{k}, \Theta_{k}) = \{ \emptyset, \{-(k-1, k)\} \}. 
\end{equation}
Hence the first sum on the RHS of \eqref{eq:IC_2nd-half} is 
\begin{align}
& \sum_{B \in \CA(s_{1} \cdots s_{n-1} s_{n} s_{n-1} \cdots s_{k}, \Theta_{k})} (-1)^{|B|} \left[ \CO_{\bQG \left( \ed(B)t_{\down(B)} \right)} (-\ve_{k}) \right] \\ 
&= \underbrace{[\CO_{\bQG(s_{1} \cdots s_{n-1} s_{n} s_{n-1} \cdots s_{k})} (-\ve_{k})]}_{B = \emptyset} - \underbrace{[\CO_{\bQG(s_{1} \cdots s_{n-1} s_{n} s_{n-1} \cdots s_{k-1})} (-\ve_{k})]}_{B = \{-(k-1, k)\}}. \label{eq:IC_2nd-half_1st-sum}
\end{align}
In addition, we see that 
\begin{equation} \label{eq:admissible_mountain3}
\CA(s_{1} \cdots s_{n-1} s_{n} s_{n-1} \cdots s_{k}, \Gamma_{k}(k)) = \{ \emptyset, \{-(k, k+1)\}, \{-(k, \overline{k})\}, \{-(k, \overline{k}), -(k, k+1)\} \}, \\ 
\end{equation}
with 
\begin{align}
\ed(\{-(k, k+1)\}) &= s_{1} \cdots s_{n-1} s_{n} s_{n-1} \cdots s_{k+1}, \\ 
\ed(\{-(k, \overline{k})\}) &= s_{1} \cdots s_{k-1}, \\ 
\ed(\{-(k, \overline{k}), -(k, k+1)\}) &= s_{1} \cdots s_{k}, 
\end{align}
and 
\begin{align}
\down(\{-(k, k+1)\}) &= \alpha_{k}^{\vee}, \\ 
\down(\{-(k, \overline{k})\}) &= \alpha_{k}^{\vee} + \cdots + \alpha_{n}^{\vee}, \\ 
\down(\{-(k, \overline{k}), -(k, k+1)\}) &= \alpha_{k}^{\vee} + \cdots + \alpha_{n}^{\vee}. 
\end{align}
From these, we deduce that the remaining terms in the second sum on the RHS of \eqref{eq:IC_2nd-half} correspond to the tuples $(j, (j_{1}, \ldots, j_{r}), A_{1}, \ldots, A_{r})$ such that 
\begin{itemize}
\item $k+1 \le j \le n$, 
\item $(j_{1}, \ldots, j_{r}) = (\overline{k}, \overline{k+1}, \ldots, \overline{j})$, and 
\item $(A_{1}, \ldots, A_{r}) = (\{-(k, k+1)\}, \{-(k+1, k+2)\}, \ldots, \{-(j-1, j)\})$, with 
\begin{align}
\ed(A_{r}) &= s_{1} \cdots s_{n-1} s_{n} s_{n-1} \cdots s_{j}, \\ 
\down(A_{1}, \ldots, A_{r}) &= \alpha_{k}^{\vee} + \cdots + \alpha_{j-1}^{\vee} \\ 
|A_{1}| + \cdots + |A_{r}| &= r = j-k. 
\end{align}
\end{itemize}
Therefore, the second sum on the RHS of \eqref{eq:IC_2nd-half} is 
\begin{align}
& \sum_{j = k+1}^{n} q^{-\pair{\ve_{j}}{\alpha_{k}^{\vee} + \cdots + \alpha_{j-1}^{\vee}}} \sum_{B \in \CA(s_{1} \cdots s_{n-1} s_{n} s_{n-1} \cdots s_{j}, \Theta_{j})} (-1)^{|B|} \left[ \CO_{\bQG \left( \ed(B)t_{\alpha_{k}^{\vee} + \cdots + \alpha_{j-1}^{\vee}} \right)}(-\ve_{j}) \right] \\ 
\begin{split}
&= q \sum_{j = k+1}^{n} \left( \underbrace{\left[\CO_{\bQG \left( s_{1} \cdots s_{n-1}s_{n}s_{n-1} \cdots s_{j}t_{\alpha_{k}^{\vee} + \cdots + \alpha_{j-1}^{\vee}} \right)} (-\ve_{j}) \right]}_{B = \emptyset} \right. \\ 
& \hspace{20mm} \left. - \underbrace{\left[ \CO_{\bQG \left( s_{1} \cdots s_{n-1}s_{n}s_{n-1} \cdots s_{j-1}t_{\alpha_{k}^{\vee} + \cdots + \alpha_{j-1}^{\vee}} \right)} (-\ve_{j}) \right]}_{B = \{-(j-1, j)\}} \right), 
\end{split} \label{eq:IC_2nd-half_2nd-sum}
\end{align}
where we have used \eqref{eq:admissible_mountain2} for the above equality. 

Also, for $1 \le i \le n$, we see that 
\begin{equation}
\CA(s_{1} \cdots s_{i}, \Theta_{i}) = \{ \emptyset, \{-(i, i+1)\}\}, 
\end{equation}
with 
\begin{equation}
\ed(\{-(i, i+1)\}) = s_{1} \cdots s_{i-1}, \quad \down(\{-(i, i+1)\}) = \alpha_{i}^{\vee}. 
\end{equation}
By combining this with \eqref{eq:admissible_mountain3}, we deduce that 
the remaining terms in the third sum on the RHS of \eqref{eq:IC_2nd-half} correspond to the tuples $(j, (j_{1}, \ldots, j_{r}), A_{1}, \ldots, A_{r})$ such that 
\begin{itemize}
\item $j = 1, \ldots, n$, 
\item $(j_{1}, \ldots, j_{r}) = (\overline{k}, \overline{k+1}, \ldots, \overline{l}, l, l-1, \ldots, j)$ for some $k \le l \le n$ with $j \le l$, \\ 
or $(j_{1}, \ldots, j_{r}) = (\overline{k}, \overline{k+1}, \ldots, \overline{l-1}, l, l-1, \ldots, j)$ for some $k < l \le n$ with $j \le l$, 
\item if $(j_{1}, \ldots, j_{r}) = (\overline{k}, \overline{k+1}, \ldots, \overline{l}, l, l-1, \ldots, j)$, then 
\begin{equation}
\begin{split}
(A_{1}, \ldots, A_{r}) &= (\{-(k, k+1)\}, \ldots, \{-(l-1, l)\}, \{-(l, \overline{l})\}, \\ 
& \qquad \{-(l-1, l)\}, \ldots, \{-(j, j+1)\}), 
\end{split}
\end{equation}
with 
\begin{align}
\ed(A_{r}) &= s_{1} \cdots s_{j-1}, \\ 
\down(A_{1}, \ldots, A_{r}) &= (\alpha_{k}^{\vee} + \cdots \alpha_{l-1}^{\vee}) + (\alpha_{l}^{\vee} + \cdots + \alpha_{n}^{\vee}) + (\alpha_{l-1}^{\vee} + \cdots + \alpha_{j}^{\vee}), \\ 
|A_{1}| + \cdots + |A_{r}| &= r = (l-k) + 1 + (l-j) = 2l-k-j+1, 
\end{align}

\item if $(j_{1}, \ldots, j_{r}) = (\overline{k}, \overline{k+1}, \ldots, \overline{l-1}, l, l-1, \ldots, j)$, then 
\begin{equation}
\begin{split}
(A_{1}, \ldots, A_{r}) &= (\{-(k, k+1)\}, \ldots, \{-(l-2, l-1)\}, \{-(l-1, \overline{l-1}), -(l-1, l)\}, \\ 
& \qquad \{-(l-1, l)\}, \ldots, \{-(j, j+1)\}), 
\end{split}
\end{equation}
with 
\begin{align}
\ed(A_{r}) &= s_{1} \cdots s_{j-1}, \\ 
\down(A_{1}, \ldots, A_{r}) &= (\alpha_{k}^{\vee} + \cdots \alpha_{l-2}^{\vee}) + (\alpha_{l-1}^{\vee} + \cdots + \alpha_{n}^{\vee}) + (\alpha_{l-1}^{\vee} + \cdots + \alpha_{j}^{\vee}), \\ 
|A_{1}| + \cdots + |A_{r}| &= r+1 = (l-k-1) + 2 + (l-j+1) = 2l-k-j+2. 
\end{align}
\end{itemize}
Here observe that for fixed $1 \le j \le n$ and $k < l \le n$ with $j \le l$, the term in the third sum on the RHS of \eqref{eq:IC_2nd-half} corresponding to $(j_{1}, \ldots, j_{r}) = (\overline{k}, \overline{k+1}, \ldots, \overline{l}, l, l-1, \ldots, j)$ and that corresponding to $(j_{1}, \ldots, j_{r}) = (\overline{k}, \overline{k+1}, \ldots, \overline{l-1}, l, l-1, \ldots, j)$ cancel each other out. 
Hence only the terms corresponding to $j \le k$, $(j_{1}, \ldots, j_{r}) = (\overline{k}, k, k-1, \ldots, j)$ remain uncancelled. 
Therefore, the third sum on the RHS of \eqref{eq:IC_2nd-half} is 
\begin{align}
& \sum_{j = 1}^{k} q^{\pair{\ve_{j}}{\alpha_{j}^{\vee} + \cdots + \alpha_{n}^{\vee}}} \sum_{B \in \CA(s_{1} \cdots s_{j-1}, \Gamma_{j}(j))} (-1)^{|B|} \left[ \CO_{\bQG \left( \ed(B)t_{\down(B) + \alpha_{j}^{\vee} + \cdots + \alpha_{n}^{\vee}} \right)} (\ve_{j}) \right] \\ 
&= q \sum_{j = 1}^{k} \left( \underbrace{\left[ \CO_{\bQG \left( s_{1} \cdots s_{j-1}t_{\alpha_{j}^{\vee} + \cdots + \alpha_{n}^{\vee}} \right)} (\ve_{j}) \right]}_{B = \emptyset} - \underbrace{\left[ \CO_{\bQG \left( s_{1} \cdots s_{j}t_{\alpha_{j}^{\vee} + \cdots + \alpha_{n}^{\vee}} \right)} (\ve_{j}) \right]}_{B = \{-(j, j+1)\}} \right); \label{eq:IC_2nd-half_3rd-sum}
\end{align}
for this equality, we have used the fact that
\begin{equation}
\CA(s_{1}, \ldots, s_{j-1}, \Gamma_{j}(j)) = \{ \emptyset, \{-(j, j+1)\} \}, 
\end{equation}
which can be shown by using Proposition~\ref{prop:criterion}.  

By combining \eqref{eq:IC_2nd-half_1st-sum}, \eqref{eq:IC_2nd-half_2nd-sum}, and \eqref{eq:IC_2nd-half_3rd-sum}, we now complete the proof of the theorem. 
\end{proof}

\section{Proof of Proposition~\ref{prop:symmetry}} \label{sec:proof_symmetry}
In this appendix, we give a proof of Proposition~\ref{prop:symmetry}. 

\begin{defn}
For $A, B \subset [1, n]$ such that $A \cap B = \emptyset$, and for $0 \le k \le 2n$, we set 
\begin{equation}
\CJ_{A, B}^{k} := \{ I \subset [1, \overline{1}] \mid \text{$[\CO_{\bQG}(-\ve_{I})] = [\CO_{\bQG}(-\ve_{A} + \ve_{B})]$ and $|I| = k$} \}. 
\end{equation}
\end{defn}

We have 
\begin{equation}
\FF_{k} = \sum_{\substack{A, B \subset [1, n] \\ A \cap B = \emptyset \\ k - (|A|+|B|) \in 2\BZ_{\ge 0}}} \left( \sum_{I \in \CJ_{A, B}^{l}} \left( \prod_{1 \le j \le \overline{1}} \psi_{I}(j) \right) \right) [\CO_{\bQG}(-\ve_{A} + \ve_{B})]. 
\end{equation}

If $k < n$, then we can construct a bijection $\CJ_{A, B}^{k} \rightarrow \CJ_{A, B}^{2n-k}$ as follows. 
Let $A, B \subset [1, n]$ be such that $A \cap B = \emptyset$. We set $s := |A|$ , $t := |B|$, and write $A$, $B$ as: 
\begin{equation}
A = \{i_{1} < \cdots < i_{s}\}, \quad B = \{j_{1} < \cdots < j_{t}\}. 
\end{equation}
We consider only the case $\CJ_{A, B}^{k} \not= \emptyset$. 
Let $k < n$ be such that $s+t \le k$ and $k - (s+t) \in 2\BZ$; 
we set $r := (k-(s+t))/2$. If $I \in \CJ_{A, B}^{k}$, then there exist $1 \le k_{1} < \cdots < k_{r} \le n$ such that 
\begin{equation} \label{eq:JABk_decomposition}
I = \{i_{1}, \ldots, i_{s}\} \sqcup \{\overline{j_{1}}, \ldots, \overline{j_{t}}\} \sqcup \{k_{1}, \ldots, k_{r}\} \sqcup \{\overline{k_{1}}, \ldots, \overline{k_{r}}\}. 
\end{equation}
Let us take $m_{1}, \ldots, m_{u} \in [1, n]$ such that 
\begin{equation}
\{ m_{1} < \cdots < m_{u} \} = [1, n] \setminus (A \sqcup B \sqcup \{k_{1}, \ldots, k_{r}\}), 
\end{equation}
and define $I^{\ast} \subset [1, \overline{1}]$ by 
\begin{equation}
I^{\ast} := \{i_{1}, \ldots, i_{s}\} \sqcup \{\overline{j_{1}}, \ldots, \overline{j_{t}}\} \sqcup \{m_{1}, \ldots, m_{u}\} \sqcup \{\overline{m_{1}}, \ldots, \overline{m_{u}}\}. 
\end{equation}
It is easy to check that $I^{\ast} \in \CJ_{A, B}^{2n-k}$. 

\begin{exm}
Let $n = 7$ and $I = \{2, 4, 5, \overline{6}, \overline{5}, \overline{2}\}$. We see that $I \in \CJ_{\{4\}, \{6\}}^{6}$. By decomposing $I$ as: 
\begin{equation}
I = \{4\} \sqcup \{\overline{6}\} \sqcup \{2, 5\} \sqcup \{\overline{2}, \overline{5}\}, 
\end{equation}
we obtain 
\begin{equation}
I^{\ast} = \{4\} \sqcup \{\overline{6}\} \sqcup \{1, 3, 7\} \sqcup \{\overline{1}, \overline{3}, \overline{7}\} \in \CJ_{\{4\}, \{6\}}^{8}. 
\end{equation}
\end{exm}

We can easily verify the following lemma. 
\begin{lem}
The assignment $I \mapsto I^{\ast}$ for $I \in \CJ_{A, B}^{k}$ gives a bijection $\CJ_{A, B}^{k} \xrightarrow{\sim} \CJ_{A, B}^{2n-k}$. 
\end{lem}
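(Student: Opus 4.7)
The plan is to reduce the claim to the elementary fact that complementation in a fixed finite set is a bijection between $r$-subsets and $(N-r)$-subsets. First, I would give an intrinsic description of $\CJ_{A,B}^{k}$: the condition $[\CO_{\bQG}(-\ve_I)] = [\CO_{\bQG}(-\ve_A + \ve_B)]$ is equivalent to the equality of weights $\ve_I = \ve_A - \ve_B$, where $\ve_I = \sum_{j \in I} \ve_j$ with the convention $\ve_{\overline{j}} = -\ve_j$. Comparing the coefficient of $\ve_l$ on both sides for each $l \in [1,n]$ shows that $I \in \CJ_{A,B}^{k}$ if and only if: $i \in I$ and $\overline{i} \notin I$ for every $i \in A$; $j \notin I$ and $\overline{j} \in I$ for every $j \in B$; and for every $l \in [1,n] \setminus (A \sqcup B)$, either both $l$ and $\overline{l}$ lie in $I$ or neither does. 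Hence, setting $C(I) := \{l \in [1,n] \setminus (A \sqcup B) \mid l, \overline{l} \in I\}$, we obtain the canonical decomposition $I = A \sqcup \overline{B} \sqcup C(I) \sqcup \overline{C(I)}$, where $\overline{B}$ and $\overline{C(I)}$ denote the sets obtained by barring every element.

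Second, letting $s = |A|$, $t = |B|$, $r = (k - s - t)/2$, and $u = (2n - k - s - t)/2 = (n - s - t) - r$, the cardinality condition $|I| = k$ translates to $|C(I)| = r$. Thus $I \mapsto C(I)$ is a bijection from $\CJ_{A,B}^{k}$ onto the set of $r$-subsets of $[1,n] \setminus (A \sqcup B)$, with inverse reassembling the decomposition from $C$. The same argument, applied with $k$ replaced by $2n - k$, yields a bijection from $\CJ_{A,B}^{2n-k}$ onto the set of $u$-subsets of the same ground set $[1,n] \setminus (A \sqcup B)$, which has total cardinality $n - s - t = r + u$.

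Third, I would verify the crucial compatibility that, under these identifications, the explicitly defined map $I \mapsto I^{\ast}$ corresponds to complementation $C \mapsto ([1,n] \setminus (A \sqcup B)) \setminus C$. This is immediate from the construction of $I^{\ast}$: by definition $C(I^{\ast}) = \{m_{1}, \ldots, m_{u}\} = ([1,n] \setminus (A \sqcup B)) \setminus \{k_{1}, \ldots, k_{r}\} = ([1,n] \setminus (A \sqcup B)) \setminus C(I)$. Since complementation in a fixed ground set of cardinality $r + u$ is an involution, it restricts to a bijection between $r$-subsets and $u$-subsets; composing with the two identifications above yields the desired bijection $\CJ_{A,B}^{k} \xrightarrow{\sim} \CJ_{A,B}^{2n-k}$. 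There is no substantial obstacle in this argument: the only point that requires genuine attention is the first step, namely reading off the intrinsic characterization of elements of $\CJ_{A,B}^{k}$ in terms of the canonical decomposition, after which bijectivity is automatic.
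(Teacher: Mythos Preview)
Your proposal is correct and is precisely the intended argument: the paper does not give a proof of this lemma at all, merely stating that one ``can easily verify'' it, and your reduction to complementation in the ground set $[1,n]\setminus(A\sqcup B)$ is exactly what the construction of $I^{\ast}$ is designed to encode. The only point worth a small remark is your first step, where you pass from $[\CO_{\bQG}(-\ve_I)] = [\CO_{\bQG}(-\ve_A + \ve_B)]$ to the equality of weights $\ve_I = \ve_A - \ve_B$; this injectivity is used tacitly throughout the paper (and indeed the decomposition \eqref{eq:JABk_decomposition} already presupposes it), so there is no gap.
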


In order to prove Proposition~\ref{prop:symmetry}, we need the following lemma. 
\begin{lem} \label{lem:duality}
Let $I \subset [1, \overline{1}]$ be decomposed as in \eqref{eq:JABk_decomposition}. Assume that $k_{r} < i_{s}$, $k_{r} < j_{t}$, or $\{\max\{i_{s}, j_{t}\}+1, \ldots, n\} \subset I$. 
Then we have 
\begin{equation}
\prod_{1 \le j \le \overline{1}} \psi_{I}(j) = \prod_{\substack{1 \le j \le n \\ j \notin I, \ j+1 \in I}} (1 - \st_{j}) \prod_{\substack{2 \le j \le n \\ \overline{j} \notin I, \ \overline{j-1} \in I}} (1 - \st_{j-1}) = \prod_{1 \le j \le \overline{1}} \psi_{I^{\ast}}(j), 
\end{equation}
where we understand that $n+1 := \overline{n}$. 
\end{lem}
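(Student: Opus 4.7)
The plan is to show that, under each of the three hypotheses, the ``third-kind'' value of $\psi_I(\overline{j})$ (namely $1 - \st_{j-1} + \st_{j-1}\st_j\cdots\st_n$) never occurs for either $I$ or $I^\ast$, so that both products collapse to the common simple product of $(1-\st_l)$ factors appearing in the middle of the display; the two resulting expressions then agree by a direct combinatorial check, using that the operators $\st_l$ pairwise commute (since $t_{\alpha_i^\vee}t_{\alpha_j^\vee} = t_{\alpha_i^\vee+\alpha_j^\vee}$).

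The third-kind case of $\psi_I(\overline{j})$ occurs precisely when $I = \{\cdots < j-1 < \overline{j-1} < \cdots\}$, i.e., $j-1, \overline{j-1} \in I$ and no element of $[j, n] \cup \{\overline{n}, \ldots, \overline{j}\}$ lies in $I$. In the decomposition $I = A \sqcup \overline{B} \sqcup K \sqcup \overline{K}$ this forces $j-1 \in K$, in fact $k_r = j-1$, together with $i_s < j-1$ and $j_t < j-1$ (vacuously if $A$ or $B$ is empty). The hypotheses $k_r < i_s$ and $k_r < j_t$ contradict this at once. Under the third hypothesis, either $\max(i_s, j_t) < n$, in which case $\{\max(i_s,j_t)+1,\ldots,n\}$ is a nonempty subset of $I \cap [1,n] = A \cup K$ disjoint from $A$, forcing $k_r = n$ and hence $\overline{n} \in \overline{K} \subset I$ — contradicting $\overline{n} \notin I$; or $\max(i_s, j_t) = n$, in which case $k_r < n = \max(i_s, j_t)$, ruling out $k_r = j-1 > \max(i_s, j_t)$. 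This gives the first equality.

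The analogous analysis for $I^\ast = A \sqcup \overline{B} \sqcup M \sqcup \overline{M}$, with $M := [1,n] \setminus (A \sqcup B \sqcup K)$, proceeds on the same template: the third-kind case of $\psi_{I^\ast}(\overline{j})$ requires $m_u = j-1$ (where $m_u = \max M$) together with $[j,n] \subset K$. Under the first two hypotheses, $[j,n] \subset K$ forces $k_r = n$, which contradicts both $k_r < i_s$ and $k_r < j_t$. Under the third hypothesis, the inclusion $[\max(i_s,j_t)+1, n] \subset K$ gives $M \subset [1, \max(i_s,j_t)]$, so $j-1 = m_u \le \max(i_s,j_t)$; combining with $[j,n] \subset K$ forces $j = \max(i_s,j_t)+1$, placing $j-1$ simultaneously in $A \cup B$ (by the maximality) and in $M$ (disjoint from $A \cup B$), a contradiction.

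With the third-kind case eliminated on both sides, each product is determined by its multiset of indices $l \in [1,n]$ at which $(1-\st_l)$ appears. Using the partition $[1,n] = A \sqcup B \sqcup K \sqcup M$, the multiset for $I$ is the disjoint union (with multiplicities) of $\{l \in [1,n-1] : l \in B\cup M,\ l+1 \in A\cup K\}$ and $\{l \in [1,n-1] : l \in B\cup K,\ l+1 \in A\cup M\}$, together with $\{n\}$ whenever $n \in B$. The corresponding multiset for $I^\ast$ is obtained by swapping $K \leftrightarrow M$, which interchanges the two index subsets, preserves the boundary condition at $l = n$, and preserves the overlap $\{l : l \in B,\ l+1 \in A\}$ (of multiplicity~$2$); hence the two multisets coincide. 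The main obstacle is the careful case analysis eliminating the third-kind value; the final combinatorial matching is essentially formal once commutativity of the $\st_l$ is noted.
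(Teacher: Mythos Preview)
Your proof is correct and follows essentially the same strategy as the paper: rule out the ``third-kind'' value $1 - \st_{j-1} + \st_{j-1}\cdots\st_n$ for both $I$ and $I^\ast$, then match the remaining $(1-\st_l)$ factors. The paper's proof is much terser---it asserts the first equality ``follows from the definition of $\psi_I$'' without explicitly invoking the hypotheses to exclude the third-kind case, and for the second equality it records the membership equivalences $j \notin I^\ast \Leftrightarrow \overline{j} \in I$, $\overline{j} \notin I^\ast \Leftrightarrow j \in I$, etc., which amount to the same $K \leftrightarrow M$ swap you use; your multiset formulation and the paper's equivalence formulation are two phrasings of the same bijection.
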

\begin{proof}
The first equality follows from the definition of $\psi_{I}$. 
We show the second equality. For $1 \le j \le n-1$, we see that $j \notin I^{\ast}$ is equivalent to $\overline{j} \in I$, while $j+1 \in I^{\ast}$ is equivalent to $\overline{j+1} \notin I$. Also, the condition that $n \notin I^{\ast}$ and $\overline{n} \in I^{\ast}$ is equivalent to the condition that $n \notin I$ and $\overline{n} \in I$. In addition, for $2 \le j \le n$, we see that $\overline{j} \notin I^{\ast}$ is equivalent to $j \in I$, while $\overline{j-1} \in I^{\ast}$ is equivalent to $j-1 \notin I$. This shows the second equality. This proves the lemma.
\end{proof}

\begin{proof}[The sketch of the proof of Proposition~\ref{prop:symmetry}]
Take $A, B \subset [1, n]$ such that $A \cap B = \emptyset$. We set $s := |A|$, $t := |B|$, and assume that $s+t \le k$, $k - (s+t) \in 2\BZ$ (so that $\CJ_{A, B}^{k} \not= \emptyset$). 
We write $A$ and $B$ as: 
\begin{equation}
A = \{i_{1} < \cdots < i_{s}\}, \quad B = \{j_{1} < \cdots < j_{t}\}, 
\end{equation}
and set $M := \max\{i_{s}, j_{t}\}$ (if $A = B = \emptyset$, then we set $M := 0$). 
It suffices to prove that 
\begin{equation} \label{eq:duality}
\sum_{I \in \CJ_{A, B}^{k}} \left( \prod_{1 \le j \le \overline{1}} \psi_{I}(j) \right) = \sum_{J \in \CJ_{A, B}^{2n-k}} \left( \prod_{1 \le j \le \overline{1}} \psi_{J}(j) \right). 
\end{equation}

For $p \ge 0$, we set $\CJ_{A, B}^{k}(p)$
\begin{equation}
\CJ_{A, B}^{k}(p) := \{ I \in \CJ_{A, B}^{k} \mid |I \cap \{M+1, \ldots, n\}| = p \}. 
\end{equation}
For $0 \le p \le n - M$, if we can show that 
\begin{equation} \label{eq:duality_2}
\sum_{I \in \CJ_{A, B}^{k}(p)} \left( \prod_{1 \le j \le \overline{1}} \psi_{I}(j) \right) = \sum_{I \in \CJ_{A, B}^{k}(p)} \left( \prod_{1 \le j \le \overline{1}} \psi_{I^{\ast}}(j) \right), 
\end{equation}
then \eqref{eq:duality} follows. 
If $I \in \CJ_{A, B}^{k}$ is decomposed as in \eqref{eq:JABk_decomposition} in such a way that 
$k_{r} < i_{s}$, $k_{r} < j_{t}$, or $\{M+1, \ldots, n\} \subset I$, 
then it follows from Lemma~\ref{lem:duality} that 
\begin{equation}
\prod_{1 \le j \le \overline{1}} \psi_{I}(j) = \prod_{1 \le j \le \overline{1}} \psi_{I^{\ast}}(j). 
\end{equation}
This shows \eqref{eq:duality_2} for $p = 0, n - M$. 
Let us consider the case that $1 \le p \le n - M - 1$. 
We see that 
\begin{align}
\sum_{I \in \CJ_{A, B}^{k}(p)} \left( \prod_{1 \le j \le \overline{1}} \psi_{I}(j) \right) &= \sum_{J \in \CJ_{A, B}^{k-2p}(0)} \sum_{M < k_{1} < \cdots < k_{p} \le n} \left( \prod_{1 \le j \le \overline{1}} \psi_{J \sqcup \{k_{1}, \ldots, k_{p}, \overline{k_{1}}, \ldots, \overline{k_{p}}\}} (j) \right), \\ 
\sum_{J \in \CJ_{A, B}^{k}(p)} \left( \prod_{1 \le j \le \overline{1}} \psi_{I^{\ast}}(j) \right) &= \sum_{I \in \CJ_{A, B}^{k-2p}(0)} \sum_{M < k_{1} < \cdots < k_{p} \le n} \left( \prod_{1 \le j \le \overline{1}} \psi_{(J \sqcup \{k_{1}, \ldots, k_{p}, \overline{k_{1}}, \ldots, \overline{k_{p}}\})^{\ast}} (j) \right). 
\end{align}

Now, let $A, B \subset [1, n]$ be such that $A \cap B = \emptyset$, and set $M := \max (A \sqcup B)$. For $1 \le p \le n-M-1$ and $J \in \CJ_{A, B}^{k-2p}(0)$, we set 
\begin{align}
S(J, p) &:= \sum_{M < k_{1} < \cdots < k_{p} \le n} \left( \prod_{1 \le j \le \overline{1}} \psi_{J \sqcup \{k_{1}, \ldots, k_{p}, \overline{k_{1}}, \ldots, \overline{k_{p}}\}} (j) \right), \\ 
T(J, p) &:= \sum_{M < k_{1} < \cdots < k_{p} \le n} \left( \prod_{1 \le j \le \overline{1}} \psi_{(J \sqcup \{k_{1}, \ldots, k_{p}, \overline{k_{1}}, \ldots, \overline{k_{p}}\})^{\ast}} (j) \right). 
\end{align}
We can show that 
\begin{equation} \label{eq:duality_3}
S(J, p) = T(J, p)
\end{equation}
for all $A, B \subset [1, n]$ such that $A \cap B = \emptyset$, $1 \le p \le n-M-1$, and $J \in \CJ_{A, B}^{k-2p}(0)$ in the following steps (the details are left to the reader): 
\begin{description}
\item[\fbox{Step~1}] Show \eqref{eq:duality_3} for $A, B \subset [1, n]$ such that $A \cap B = \emptyset$, and $J \in \CJ_{A, B}^{k-2p}(0)$ with $M \in J$ by induction on $p \ge 1$. 
\item[\fbox{Step~2}] Show \eqref{eq:duality_3} for $A, B \subset [1, n]$ such that $A \cap B = \emptyset$, and $J \in \CJ_{A, B}^{k-2p}(0)$ with $\overline{M} \in J$ by induction on $p \ge 1$. 
\item[\fbox{Step~3}] Show \eqref{eq:duality_3} for $J = \emptyset$ by induction on $p \ge 1$. 
\end{description}
This completes the sketch of the proof of the proposition. 
\end{proof}

\end{document}